\numberwithin{equation}{section}
\newtheorem{assumption}{Assumption}
\newtheorem{fig}{Figure}
\newcommand{\bthe}{\begin{theorem}}
\newcommand{\ethe}{\end{theorem}}
\newcommand{\ben}{\begin{enumerate}}
\newcommand{\een}{\end{enumerate}}
\newcommand{\bit}{\begin{itemize}}
\newcommand{\eit}{\end{itemize}}
\newcommand{\ex}{\textnormal{e}^}
\newcommand{\beq}{\begin{equation}}
\newcommand{\eeq}{\end{equation}}
\newcommand{\ble}{\begin{lemma}}
\newcommand{\ele}{\end{lemma}}
\newcommand{\bde}{\begin{definition}\rm}
\newcommand{\ede}{\halmos\end{definition}}
\newcommand{\bco}{\begin{corollary}}
\newcommand{\eco}{\end{corollary}}
\newcommand{\bpr}{\begin{proposition}}
\newcommand{\epr}{\end{proposition}}
\newcommand{\brem}{\begin{remark}\rm}
\newcommand{\erem}{\end{remark}}
\newcommand{\bproof}{\begin{proof}}
\newcommand{\eproof}{\hfill\(\square\)\end{proof}}
\newcommand{\bexam}{\begin{example}\rm}
\newcommand{\eexam}{\end{example}}
\newcommand{\bfi}{\begin{fig}}
\newcommand{\efi}{\end{fig}}
\newcommand{\btab}{\begin{tab}}
\newcommand{\etab}{\end{tab}}
\newcommand{\beao}{\begin{eqnarray*}}
\newcommand{\eeao}{\end{eqnarray*}\noindent}
\newcommand{\beam}{\begin{eqnarray}}
\newcommand{\eeam}{\end{eqnarray}\noindent}
\newcommand{\barr}{\begin{array}}
\newcommand{\earr}{\end{array}}
\newcommand{\bdis}{\begin{displaymath}}
\newcommand{\edis}{\end{displaymath}\noindent}
\newcommand{\R}{\mathbb R}
\newcommand{\N}{\mathbb N}
\newcommand{\Z}{\mathbb Z}
\newcommand{\E}{\mathbb E}
\newcommand{\cals}{\mathcal{S}}
\newcommand{\calf}{\mathcal{F}}
\newcommand{\stp}{\stackrel{P}{\rightarrow}}
\newcommand{\std}{\stackrel{\mathcal{D}}{\rightarrow}}
\newcommand{\stas}{\stackrel{\rm a.s.}{\rightarrow}}
\newcommand{\stv}{\stackrel{v}{\rightarrow}}
\newcommand{\nto}{n\to\infty}
\newcommand{\ga}{{\gamma}}
\newcommand{\si}{{\sigma}}
\newcommand{\var}{{\mathbb{V}ar}}
\newcommand{\cov}{{\mathbb{C}ov}}
\newcommand{\ov}{\overline}
\newcommand{\wh}{\widehat}
\newcommand{\wt}{\widetilde}
\newcommand{\bs}{\pmb}
\DeclareMathOperator*{\argmin}{arg\,min}
\DeclareMathOperator*{\diag}{diag}
\newcommand{\bbn}{\mathbb{N}}
\newcommand{\bbz}{\mathbb{Z}}
\newcommand{\bbr}{\mathbb{R}}
\newcommand{\bbp}{\mathbb{P}}
\newcommand{\one}{\mathbbmss{1}_}
\newcommand{\Rho}{\mathrm{P}}
\newcommand{\kto}{k\to\infty}
\def\cald{{\mathcal{D}}}
\def\calf{{\mathcal{F}}}
\def\cali{{\mathcal{I}}}
\def\cals{{\mathcal{S}}}
\def\calh{{\mathcal{H}}}
\newcommand{\halmos}{\quad\hfill\mbox{$\Box$}}  
\newcommand{\trans}{{}^{^{\intercal}}}
\newcommand{\GLSE}{{\rm GLSE}}
\begin{document}

\title{Generalised least squares estimation of regularly varying space-time processes based on flexible observation schemes}

\titlerunning{Generalised LSE of regularly varying space-time processes}

\author{Sven Buhl         \and
        Claudia Kl{\"u}ppelberg 
}

\institute{S. Buhl \at
              Center for Mathematical Sciences, Technical University of Munich,  85748 Garching, Boltzmannstr. 3, Germany\\
              \email{sven.buhl@tum.de}           
           \and
           C. Kl{\"u}ppelberg \at
              Center for Mathematical Sciences, Technical University of Munich,  85748 Garching, Boltzmannstr. 3, Germany\\
              \email{cklu@tum.de}
}

\date{Received: date / Accepted: date}

\maketitle

\begin{abstract}
Regularly varying stochastic processes model extreme dependence between process values at different locations and/or time points. 
For such stationary processes we propose a two-step parameter estimation of the extremogram, when some part of the domain of interest is fixed and another increasing.
We provide conditions for consistency and asymptotic normality of the empirical extremogram centred by a pre-asymptotic version for such observation schemes.
For max-stable processes with Fr{\'e}chet margins we provide conditions, such that the empirical extremogram (or a bias-corrected version) centred by its true version is asymptotically normal.
In a second step, for a parametric extremogram model, we fit the parameters by generalised least squares estimation and prove consistency and asymptotic normality of the estimates.
We propose subsampling procedures to obtain asymptotically correct confidence intervals.
Finally, we apply our results to a variety of Brown-Resnick processes.
A simulation study shows that the procedure works well also for moderate sample sizes.
\keywords{Brown-Resnick process \and extremogram \and generalised least squares estimation \and max-stable process \and observation schemes \and regularly varying process \and semiparametric estimation \and space-time process \\}

\noindent
\textbf{AMS 2010 Subject Classifications} Primary: 60F05 $\cdot$ 60G70 $\cdot$ 62F12 $\cdot$ 62G32 $\cdot$ Secondary: 37A25 $\cdot$ 62M30 $\cdot$ 62P12
\end{abstract}

\section{Introduction}\label{s1}

Max-stable processes and regularly varying processes have in recent years attracted attention as time series models, spatial processes and space-time processes.
Regularly varying processes have been investigated in \citet{HL2005,HL} and basic results for max-stable processes can be found in \citet{deHaan5}.
Such processes provide a useful framework for modelling and estimation of extremal events in their different settings. 

Among the various regularly varying models considered in the literature, max-stable Brown-Resnick processes play a prominent role allowing for flexible fractional variogram models as often observed in environmental data.
They have been introduced for time series in \citet{Brown}, for spatial processes in \citet{Schlather2}, and in a space-time setting in \citet{Steinkohl}. 

For max-stable processes with parametrised dependence structure, various estimation procedures have been proposed for extremal data. 
Composite likelihood methods have been described in \citet{Ribatet} and \citet{Huser}. 
Threshold-based likelihood methods have been proposed in \citet{Wadsworth2} and \citet{Engelke1}.
For the max-stable Brown-Resnick process asymptotic results of composite likelihood estimators have been derived in  \cite{Steinkohl2}, \citet{Huser2}, and \citet{buhl1}. 
In some special cases full likelihood estimation is feasible, which opens the door for frequentist or Bayesian approaches; see for example \citet{Dombryetal,Thibaud1}.

Parameter estimation based on likelihood methods can be laborious and time consuming, and also the choice of good initial values for the optimization routine is essential. 
As a consequence, a semiparametric estimation procedure can be an alternative or a prerequisite for a subsequent likelihood method.
Such an estimation method has been suggested and analysed for  space-time processes with additively separable dependence function in \citet{steinkohlphd} and \citet{Steinkohl3} based on the extremogram, which is a natural extremal analogue of the correlation function for stationary processes. 
The extremogram was introduced for time series in \citet{Davis2} and \citet{Fasen},  and extended to a spatial and space-time settings in \cite{steinkohlphd} and \citet{cho}. 
Semiparametric estimation requires a parametric extremogram model.
The parameter estimation is then based on the empirical extremogram, and a subsequent least squares estimation of the parameters. 

The processes considered in \cite{steinkohlphd}, \cite{cho}, \cite{buhl3}, and \cite{Steinkohl3} are isotropic {in space}; cf. model (I) in Section~\ref{s32} below. 
The central goal of this paper is to generalise the semiparametric method developed {in  \cite{Steinkohl3} in various aspects.}
{We list the most important extensions:}\\
-- {In \cite{Steinkohl3} ordinary least squares estimation was performed separately for the spatial and the temporal dependence parameters.}
This was possible, since we assumed an additively separable dependence model, linear in its parameters after a suitable transformation. In the present paper we allow for a much larger class of dependence models provided they satisfy some weak regularity conditions.
In particular, we allow for non-linear structures in the dependence models, and
we estimate a space-time dependence model, which is not necessarily separable.\\
-- To fit these general models to data, we develop a generalised (weighted) least squares estimation method, which estimates all dependence parameters in one go.\\
-- We again focus on extremogram estimation, but extend the observation scheme as described below.  
In the context of spatial or space-time extremogram estimation {based on gridded data}, the observation scheme used so far in \cite{steinkohlphd}, \cite{cho}, \cite{buhl3}, and \cite{Steinkohl3} has been a regular grid in space, possibly observed at equidistant time points and assumed to expand to infinity in all spatial dimensions as well as in time.
We extend this observation scheme to a more realistic setting:
in practice one often observes data on a $d$-dimensional area ($d \in \bbn$), which is small with respect to some of its dimensions (for instance, the spatial dimensions) and large with respect to others (for instance, the temporal dimension). 
Hence, with regard to such cases, it is appropriate to assume the observed data to expand to infinity in some dimensions, but remain fixed in some others.
Such observation schemes require to split up every point and every lag in its components corresponding to the fixed and increasing domains. \\
-- {For such general observation schemes} we have to  extend the asymptotic theory developed in  \cite{buhl3}  considerably. The empirical extremogram estimator used in the first step of the semiparametric estimation procedure needs to be extended and asymptotic results need to be verified.
For an arbitrary parametric extremogram model we then derive asymptotic results of its generalised least squares estimators, which differ considerably from those obtained when the grid increases in all dimensions. 

Our paper is organised as follows.
In Section~\ref{s2} we introduce the theoretical framework of strictly stationary regularly varying processes. We define the extremogram, the observation scheme with its fixed and increasing dimensions as well as assumptions and asymptotic second order properties following from regular variation. 
Section~\ref{s4} presents the empirical and the pre-asymptotic extremogram.
Here we prove a CLT for the empirical extremogram centred by the pre-asymptotic version.
We also specify the asymptotic covariance matrix.
We prove a CLT for the empirical extremogram centred by the true extremogram under more restrictive assumptions. 
To formally state the asymptotic properties of the empirical extremogram, we need to quantify the dependence in a stochastic process, taking into account the different types of observation areas. 
For processes with Fr{\'e}chet margins we prove asymptotic normality of the empirical extremogram centred by the true one. 
In case the required conditions are not satisfied, we provide  assumptions under which a CLT for a bias corrected version of the empirical extremogram can be obtained.
Section~\ref{s5} is dedicated to the parameter estimation by a generalised least squares method.
Under appropriate regularity conditions we prove consistency and asymptotic normality, where the rate of convergence depends on the observation scheme. We also present the covariance matrix in a semi-explicit form. 
In Section~\ref{s3} we show our method at work for Brown-Resnick space-time processes.
We state conditions for Brown-Resnick processes that imply the mixing conditions from Section~\ref{s4} and are hence sufficient to obtain the corresponding CLTs for the empirical extremogram. 
These conditions depend highly on the model for the associated variogram.
Finally, in Section~\ref{s32} we apply these results to three different dependence models of the Brown-Resnick process, and prove the mixing conditions, which guarantee the asymptotic normality of the empirical extremogram, as well as the regularity conditions of the generalised least squares estimates. 
In Section~\ref{s6}  we examine the finite sample properties of the GLSEs in a simulation study, fitting  the parametric models described in Section~\ref{s32} to simulated  Brown-Resnick processes. We apply subsampling methods to obtain asymptotically valid confidence bounds of the parameters. We examine how  the sample size affects the estimates and compare with the theoretical results obtained in previous sections.
Many proofs are rather technical and postponed to an Appendix.

\section{Model description and the observation scheme} \label{s2}

We consider the same theoretical framework as in \citet{buhl3} and \citet{Steinkohl3} of {\textit{a strictly stationary regularly varying process}} $\{X(\bs s): \bs s \in \mathbb{R}^d\}$ for $d\in\N$, defined on a probability space $(\Omega,\mathcal{G},\bbp)$. 
This implies that there exists some normalizing sequence $0<a_n \rightarrow \infty$ such that $\mathbb{P}(|X(\bs 0)|>a_n) \sim n^{-d}$ as $n \rightarrow \infty$ and that for every  finite  set $\mathcal{I} \subset \mathbb{R}^d$ with cardinality $|\mathcal{I}|<\infty$,
\begin{align} \label{regvar}
n^d\mathbb{P}\Big(\frac{X_{\mathcal{I}}}{a_n} \in \cdot\Big) \stv \mu_{\mathcal{I}}(\cdot), \quad n \rightarrow \infty,
\end{align}
for some non-null  Radon measure $\mu_{\mathcal{I}}$ on the Borel sets in $\overline{\mathbb{R}}^{|\mathcal{I}|}\backslash\{\bs 0\}$, where $\ov{\mathbb{R}}=\mathbb{R} \cup \{-\infty,\infty\}$ and $X_{\mathcal{I}}$ denotes the vector $(X(\bs s) : \bs s\in \mathcal{I}).$ 
The limit measure is homogeneous:
$$\mu_{\mathcal{I}}(xC)=x^{-\beta}\mu_{\mathcal{I}}(C), \quad x>0,$$ 
for every Borel set $C\subset\overline{\mathbb{R}}^{|\mathcal{I}|}\backslash\{\bs 0\}$.
The notation $\stv$ stands for vague convergence, and $\beta>0$ is called the {\textit{index of regular variation}}. 
Furthermore, $f(n)\sim g(n)$ as $\nto$ means that $\lim_{\nto} f(n)/ g(n) =1$. 
If $\mathcal{I}$ is a singleton; i.e., $\mathcal{I}=\{\bs s\}$ for some $\bs s \in \mathbb{R}^d$, we set 
\beam\label{mu}
\mu_{\{\bs s\}}(\cdot)=\mu_{\{\bs 0\}}(\cdot)=:\mu(\cdot),
\eeam
which is justified by stationarity. 
For more details see \cite{buhl3}.
For background on regular variation for stochastic processes and vectors see \citet{HL2005,HL} and \citet{Resnick,Resnick3}.

The extremogram for values in $\R^d$ is defined as follows.

\begin{definition}[Extremogram] \label{DefExtremo}
Let $\{X(\bs{s}): \bs{s} \in \mathbb{R}^d\}$ be a strictly stationary regularly varying process and $a_n\to\infty$ a sequence satisfying \eqref{regvar}.
For $\mu$ as in \eqref{mu} and two $\mu$-continuous Borel sets $A$ and $B$  in $\overline{\mathbb{R}}\backslash\{0\}$ (i.e., $\mu(\partial A)=\mu(\partial B)=0$) such that $\mu(A)>0$,  the \textnormal{extremogram} is defined as
\begin{align} \label{extremo}
\rho_{AB}(\bs{h})=\lim_{n \rightarrow \infty} \frac{\mathbb{P}(X(\bs{0})/a_n \in A,X(\bs{h})/a_n \in B)}{\mathbb{P}(X(\bs{0})/a_n \in A)}, \quad \bs{h} \in \mathbb{R}^d.
\end{align} 
For $A=B=(1,\infty)$, the extremogram $\rho_{AB}(\bs h)$ is the {{tail dependence coefficient}} between $X(\bs{0})$ and $X(\bs{h})$ (cf. \citet{Beirlant}, Section~9.5.1).
\end{definition}
For the data we allow for realistic observation schemes described in the following.

\begin{assumption}\label{as2.2}
The data are given in an observation area $\mathcal{D}_n \subset \mathbb{Z}^d$ that can (possibly after reordering) be decomposed into 
\begin{align}\label{observed}
\mathcal{D}_n=\mathcal{F} \times \mathcal{I}_n,
\end{align}
where for $q,w\in\N$ satisfying $w+q=d$:
\begin{enumerate}
\item[(1)]
$\mathcal{F} \subset \mathbb{Z}^q$ is a fixed domain independent of $n$, and
\item[(2)]
 $\mathcal{I}_n=\{1,\ldots,n\}^w$ 
  is an increasing sequence of regular grids.  
 \end{enumerate}
\end{assumption}

This setting is similar to that used in \citet{Genton}, where asymptotic properties of space-time covariance estimators are derived.
The natural extension of the regular grid $\mathcal{I}_n$ to grids with different side lengths only increases notational complexity, which we avoid here. Our focus is on observations schemes, which are partially fixed and partially tend to infinity.

\bexam\label{spacetime}
In the special case where the observation area is given by 
$$\mathcal{D}_n=\mathcal{F} \times \{1,\ldots,n\}$$ 
for $\mathcal{F} \subset \mathbb{R}^{d-1}$, we interpret the observations as generated by a space-time process $\{X(\bs s,t): \bs s \in \mathbb{R}^{d-1}, t \in [0,\infty)\}$ on a fixed spatial and an increasing temporal domain.
\eexam

We shall need some definitions and assumptions, which we summarize as follows.
\begin{assumption}\label{ass0}\hspace*{2cm}\newline
$(1)$ \, For some fixed $\ga>0$ and $\bs{0},\bs \ell\in\R^d$ we define the balls
\begin{align*}
 \mathcal{B}(\bs{0},\ga) &= \big\{\bs{s}\in \mathbb{Z}^d : \|\bs s\| \leq \ga\big\}\text{ and }  \mathcal{B}(\bs{\ell},\ga) = \big\{\bs{s}\in \mathbb{Z}^d : \|\bs \ell-\bs s\| \leq \ga\big\}=\bs \ell + \mathcal{B}(\bs 0,\ga).
\end{align*}
$(2)$ \, 
The estimation of the extremogram is based on a set
$\mathcal{H}=\{\bs h^{(1)},\ldots,\bs h^{(p)}\} \subset \mathcal{B}(\bs 0,\ga)$ of observed lag vectors.\\[2mm]
$(3)$ \,
We decompose points $\bs s \in \mathbb{R}^d$ with respect to the fixed and increasing domains into $\bs s=(\bs f,\bs i) \in \mathbb{R}^q \times \mathbb{R}^w$.\\[2mm]
$(4)$ \,
Similarly, we decompose lag vectors $\bs h=\bs s-\bs s'$ or $\bs \ell=\bs s-\bs s'$ for some $\bs s,\bs s' \in \mathbb{R}^d$ into $\bs h=(\bs h_{\mathcal{F}},\bs h_{\mathcal{I}})$ or $\bs \ell=(\bs \ell_{\mathcal{F}},\bs \ell_{\mathcal{I}})$ in $\mathbb{R}^q \times \mathbb{R}^w$. 
The letter $\bs h$ is used throughout as argument of the extremogram or its estimators.\\[2mm]
$(5)$ \, 
We define the vectorised process $\{\bs{Y}(\bs{s}): \bs{s}\in\bbr^d\}$ by 
$$\bs{Y}(\bs s):=X_{\mathcal{B}(\bs s,\gamma)};$$
 i.e.,  $\bs Y(\bs s)$ is the vector of values of $X$ with indices in  the ball $\mathcal{B}(\bs s,\ga)$.
\\[2mm]
$(6)$ \,
We shall also need the following relations, {already stated in (3.3) and (3.4)} of \cite{buhl3}.
For $a_n\to\infty$ as in \eqref{regvar}, the following limits exist by regular variation of $\{X(\bs s): \bs s \in \mathbb{R}^d\}$. For $\bs \ell \in \mathbb{R}^d$ and $\ga>0$,
\beam
\mu_{\mathcal{B}(\bs 0, \ga)}(C)  &:=& \lim_{\nto} n^d\mathbb{P}(\bs Y(\bs 0)/a_{n} \in C)\label{B6},\\
\tau_{{\mathcal{B}(\bs 0,\gamma) \times \mathcal{B}(\bs \ell,\gamma)}} (C \times D) 
&:=& \lim_{\nto} n^d\mathbb{P}\Big(\frac{\bs Y(\bs 0)}{a_{n}} \in C, \frac{\bs Y(\bs \ell)}{a_{n}} \in D\Big),\label{B7}
\eeam
for a $\mu_{\mathcal{B}(\bs 0, \ga)}$-continuous Borel set $C$ in $\ov\R^{|\mathcal{B}(\bs 0,\ga)|}\backslash\{\bs 0\}$ and a $\tau_{{\mathcal{B}(\bs 0,\gamma) \times \mathcal{B}(\bs \ell,\gamma)}}$-continuous Borel set $C\times D$ in the product space. \\[2mm]
$(7)$ \,  
For arbitrary but fixed $\mu$-continuous Borel sets $A$ and $B$ in $\overline{\mathbb{R}}\backslash\{0\}$ such that $\mu(A)>0$, we define sets $D_1,\ldots,D_{p},D_{p+1}$ by the identity
\beam\label{Di}
\{\bs Y(\bs s)\in D_i\} = \{X(\bs{s})\in A,X(\bs{s}+\bs h^{(i)})\in B\}
\eeam
for $i=1,\ldots,p$, and $\{\bs Y(\bs s)\in D_{p+1}\} = \{X(\bs s)\in A\}$.
Note in particular that, by the relation between $\{\bs Y(\bs s): \bs s \in\R^d\}$ and $\{X(\bs s): \bs s \in\R^d\}$ and  regular variation, 
$$\mu_{\mathcal{B}(\bs 0,\ga)}(D_{p+1})=\lim\limits_{\nto} n^d \mathbb{P}({\bs Y} (\bs 0)/a_n \in D_{p+1})=\lim\limits_{\nto} n^d \mathbb{P}(X(\bs 0)/a_n \in A)=\mu(A).$$
\halmos
\end{assumption}

\section{Limit theory for the empirical extremogram}\label{s4}

{We derive asymptotic properties of the empirical extremogram
by formulating appropriate mixing conditions, generalising the results obtained in \cite{buhl3} to the more realistic setting of this paper. }
The proofs are based on spatial mixing conditions, which have to be adapted to the decomposition into a fixed and an increasing observation domain.
In principle, our proofs rely on general results of \citet{Ibragimov} and \citet{Bolthausen}.

The main theorem of this section states asymptotic normality of the empirical extremogram sampled at lag vectors $\bs h \in \calh$ and centred by its {pre-asymptotic} counterpart. 
The empirical and the pre-asymptotic extremograms are defined in Eq.~\eqref{EmpEst} and \eqref{preasymptotic}.

For the definition of the empirical extremogram we need the following notation: for $k \in \mathbb{N}$, an arbitrary set $\mathcal{Z} \subset \mathbb{Z}^k$ and a fixed vector $\bs h \in \mathbb{Z}^k$,
define the sets
\beam\label{lagclos}
\mathcal{Z}(\bs h):=\{\bs z \in \mathcal{Z}: \bs z+ \bs h\in \mathcal{Z}\},
\eeam
which is the set of vectors $\bs z \in \mathcal{Z}$ such that with $\bs z$ also the lagged vector $\bs z+\bs h$ belongs to $\mathcal{Z}$.

\begin{definition}\label{ee}
Let $\{X(\bs{s}): \bs{s} \in \mathbb{R}^d\}$ be a strictly stationary regularly varying  process, which is observed on $\mathcal{D}_n=\mathcal{F} \times \mathcal{I}_n$ as in \eqref{observed}. 
Let $A$ and $B$ be $\mu$-continuous Borel sets  in $\overline{\mathbb{R}}\backslash\{0\}$ such that $\mu(A)>0$. 
For a sequence $m=m_n \rightarrow \infty$ and $m_n=o(n)$ as $\nto$ define the following quantities:
\begin{enumerate}
\item[(1)]
The \textnormal{empirical extremogram} 
\begin{align} \label{EmpEst}
\widehat{\rho}_{AB,m_n}(\bs h):=\frac{\dfrac1{|\mathcal{D}_n(\bs h)|}\sum\limits_{\bs s \in \mathcal{D}_n(\bs h)} \mathbbmss{1}_{\{X(\bs s)/a_m \in A, X(\bs s+\bs h)/a_m \in B\}}}
{\dfrac1{|\mathcal{D}_n|}\sum\limits_{\bs s \in \mathcal{D}_n} \mathbbmss{1}_{\{ X(\bs s)/a_m \in A\}}}, \quad  \bs h\in\calh.
\end{align}
For a fixed data set the value $a_m=a_{m_n}$ has to be specified as a large empirical quantile.
\item[(2)]
The \textnormal{pre-asymptotic extremogram} 
\begin{equation}\label{preasymptotic}
\rho_{AB,m_n}(\bs h) = \frac{\mathbb{P}\left(X(\bs{0})/a_m \in A, X(\bs{h})/a_m\in B\right)}{\mathbb{P}(X(\bs{0})/a_m\in A)}, \quad \bs{h} \in \mathbb{R}^d.
\end{equation}
\end{enumerate}
\end{definition}

Key of the proofs of consistency and asymptotic normality of the empirical extremogram below is the fact that $\wh{\rho}_{AB,m_n}(\bs h)$ is the empirical version of the pre-\-asymp\-totic extremogram $\rho_{AB,m_n}(\bs h)$. {This can for different $\bs h \in \mathcal{B}(\bs 0,\ga)$ in turn be viewed as a ratio of pre-asymptotic versions of $\mu_{\mathcal{B}(\bs 0, \ga)}(C(\bs h))$ (cf. Eq.~\eqref{B6}). The sets $C(\bs h)$ are implicitly defined by $\{\bs Y(\bs s) \in C(\bs h)\}=\{X(\bs s) \in A, X(\bs s+\bs h) \in B\}$ for $\bs s \in \mathbb{R}^d$. 
Then in particular, for $\bs h \in \mathcal{B}(\bs 0,\ga)$, 
$$\mathbb{P}\Big(\frac{X(\bs 0)}{a_m} \in A, \frac{X(\bs h)}{a_m} \in B\Big)=\mathbb{P}\Big(\frac{\bs Y(\bs 0)}{a_m} \in C(\bs h)\Big).$$ 
Note that, by \eqref{Di}, if $\bs h=\bs h^{(i)} \in \mathcal{H}$, then $C(\bs h)=D_i$, and  if $\bs h=\bs 0$ and $A=B$ then $C(\bs h)=D_{p+1}$.}

In view of \eqref{B6}, $\mu_{\mathcal{B}(\bs 0, \ga)}(C(\bs h))$ can be estimated by an empirical mean, where the estimator has to cope with Assumption~\ref{as2.2} of an observation area with fixed and increasing domain.

\bde
Assume the situation of Definition~\ref{ee}.
Based on observations on $\mathcal{D}_n=\mathcal{F} \times \mathcal{I}_n$ as in \eqref{observed} decompose the observations $\bs s=(\bs f,\bs i)\in \mathcal{F} \times \mathcal{I}_n$ and the lags  $\bs h=(\bs h_\calf,\bs h_{\cali})\in\calh$ as in Assumption~\ref{ass0}(3) and~(4).
For $\bs h_{\mathcal{F}}\in\calh$ define $\mathcal{F}(\bs h_{\mathcal{F}})$ as in \eqref{lagclos}.
Then an empirical version of $\mu_{\mathcal{B}(\bs 0, \ga)}(C(\bs h))$ is for $\bs h\in\calh$ given by
\beam\label{estmu}
\widehat{\mu}_{\mathcal{B}(\bs 0,\ga),m_n}(C(\bs h)) := \frac{m_n^d}{n^w} \sum_{\bs i\in \mathcal{I}_n} \frac{1}{|\mathcal{F}(\bs h_{\mathcal{F}})|} \sum_{\bs f \in \mathcal{F}(\bs h_{\mathcal{F}})} \mathbbmss{1}_{\{\frac{\bs Y(\bs f, \bs i)}{a_m} \in C(\bs h)\}}.
\eeam
\ede

Observe that for fixed $\bs h_{\mathcal{F}} \in \Z^q$ and observations on $\mathcal{D}_n=\mathcal{F} \times \mathcal{I}_n$ there will be points $\bs s=(\bs f,\bs i) \in \mathcal{F}(\bs h_{\mathcal{F}}) \times \mathcal{I}_n$ with $\bs i$ near the boundary of $\mathcal{I}_n$, such that not all components of the vector $\bs Y(\bs s)=\bs Y(\bs f,\bs i)$ are observed.
However, since we investigate asymptotic properties of $\mathcal{I}_n$ whose boundary points are negligible, we can ignore such technical details. 
As will be seen in the proofs below, for every $\bs h \in \mathcal{H}$, the empirical extremogram $\widehat{\rho}_{AB,m_n}(\bs h)$ is asymptotically equivalent to the ratio of estimates $\widehat{\mu}_{\mathcal{B}(\bs 0,\ga),m_n}(C(\bs h))/\widehat{\mu}_{\mathcal{B}(\bs 0,\ga),m_n}(D_{p+1})$.

Limit results for the empirical extremogram \eqref{EmpEst} involve the calculation of mean and variance of $\widehat{\mu}_{\mathcal{B}(\bs 0,\ga),m_n}(C(\bs h^{(i)}))=\widehat{\mu}_{\mathcal{B}(\bs 0,\ga),m_n}(D_i)$ for $\bs h^{(i)} \in \calh$.
Strict stationarity and Assumption~\ref{ass0}(6) yields immediately by a law of large numbers that \\$\E[\widehat{\mu}_{\mathcal{B}(\bs 0,\ga),m_n}(D_i)] \to {\mu}_{\mathcal{B}(\bs 0,\ga)}(D_i)$ as $\nto$.
Calculation of the variance involves the covariance structure and
we decompose as in Assumption~\ref{ass0}(4) $\bs h^{(i)}$ into $\bs h^{(i)}=(\bs h^{(i)}_{\mathcal{F}},\bs h^{(i)}_{\mathcal{I}})$ $\in \mathbb{R}^q \times \mathbb{R}^w$.
We have to calculate for $\bs f,\bs f' \in \mathcal{F}(\bs h_{\mathcal{F}}^{(i)})$ and $\bs i,\bs i' \in \mathcal{I}_n$, 
$$\cov\Big[\one{\{\frac{\bs Y(\bs f,\bs i)}{a_m} \in D_i\}},\one{\{\frac{\bs Y(\bs f',\bs i')}{a_m} \in D_i\}}\Big]=\cov\Big[\one{\{\frac{\bs Y(\bs 0)}{a_m} \in D_i\}},\one{\{\frac{\bs Y(\bs \ell_{\mathcal{F}},\bs \ell_{\mathcal{I}})}{a_m} \in D_i\}}\Big]$$
with $\bs \ell_{\mathcal{F}}=\bs f-\bs f'$ and $\bs \ell_{\mathcal{I}}=\bs i-\bs i'$, where the equality holds by stationarity. 
The lag vectors $\bs\ell_\calf$ and $\bs\ell_\cali$ are contained in $L_{\mathcal{F}}^{(i,i)}$ and $L_n$, respectively, where for $i,j \in \{1,\ldots,p\}$,
\begin{align}
L_{\mathcal{F}}^{(i,j)}:=\{\bs f-\bs f': \bs f \in \mathcal{F}(\bs h_{\mathcal{F}}^{(i)}), \bs f' \in \mathcal{F}(\bs h_{\mathcal{F}}^{(j)})\}\quad\mbox{and}\quad
L_n:=\{\bs i-\bs i': \bs i, \bs i'\in \mathcal{I}_n\}. \label{lag_fix_2}
\end{align}
The number of appearances of the lag $\bs \ell_{\mathcal{F}}$  we denote for $i,j \in \{1,\ldots,p\}$ by
\begin{align}\label{lag_appearance_fix}
\textnormal{N}^{(i,j)}_{\mathcal{F}}(\bs \ell_{\mathcal{F}}):=\sum_{\bs f \in \mathcal{F}(\bs h_{\mathcal{F}}^{(i)})} \sum_{\bs f' \in \mathcal{F}(\bs h_{\mathcal{F}}^{(j)})} \one{\{\bs f-\bs f'=\bs \ell_{\mathcal{F}}\}} 
\end{align}
Observe that a lag $(\bs \ell_{\mathcal{F}},\bs \ell_{\mathcal{I}})$ with $\bs \ell_{\mathcal{I}}=(\ell_{\mathcal{I}}^{(1)},\ldots,\ell_{\mathcal{I}}^{(w)})$ appears in $L_{\mathcal{F}}^{(i,i)} \times L_n$ exactly $\textnormal{N}_{\mathcal{F}}^{(i,i)}(\bs \ell_{\mathcal{F}})\prod_{j=1}^w(n-|\ell_{\mathcal{I}}^{(j)}|)$ times.
We show in Lemma~\ref{st-asy_la} that
\begin{align}\label{varpm0}
\lefteqn{\var\big[\widehat{\mu}_{\mathcal{B}(\bs 0,\ga),m_n}(D_i)\big] 
=\frac{m_n^{2d}}{n^{2w}|\mathcal{F}(\bs h_{\mathcal{F}}^{(i)})|^2} \var\big[\sum_{\bs f \in \mathcal{F}(\bs h_{\mathcal{F}}^{(i)})}\sum_{\bs i\in \mathcal{I}_n} \mathbbmss{1}_{\{\frac{\bs Y(\bs f,\bs i)}{a_m} \in D_i\}}\big]}\notag\\
&=\frac{m_n^{2d}}{n^{2w}|\mathcal{F}(\bs h_{\mathcal{F}}^{(i)})|^2} \Big(|\mathcal{F}(\bs h_{\mathcal{F}}^{(i)})| n^w\var\big[\mathbbmss{1}_{\{\frac{\bs Y(\bs 0)}{a_m} \in D_i\}}\big] \\
&\quad\quad\quad\quad +\sum_{\bs f, \bs f' \in \mathcal{F}(\bs h_{\mathcal{F}}^{(i)})}\sum_{\bs i, \bs i' \in \mathcal{I}_n \atop (\bs f,\bs i) \neq (\bs f',\bs i')} 
\cov\big[\mathbbmss{1}_{\{\frac{\bs Y(\bs f, \bs i)}{a_m} \in D_i\}},\mathbbmss{1}_{\{\frac{\bs Y(\bs f', \bs i')}{a_m} \in D_i\}}\big]\Big) \notag\\
&\sim  \frac{m_n^d}{n^w} \frac{1}{|\mathcal{F}(\bs h_{\mathcal{F}}^{(i)})|} 
\Big(\mu_{\mathcal{B}(\bs 0,\ga)}(D_i) \nonumber\\
&\quad\quad\quad\quad +\sum_{\bs \ell_{\mathcal{I}} \in \mathbb{Z}^w} \frac{1}{|\mathcal{F}(\bs h_{\mathcal{F}}^{(i)})|}\sum_{\bs \ell_{\mathcal{F}} \in L_{\mathcal{F}}^{(i,i)} \atop (\bs \ell_{\mathcal{F}}, \bs \ell_{\mathcal{I}}) \neq \bs 0} \textnormal{N}_{\mathcal{F}}^{(i,i)}(\bs \ell_{\mathcal{F}}) \, \tau_{\mathcal{B}(\bs 0,\ga)\times \mathcal{B}((\bs \ell_{\mathcal{F}}, \bs \ell_{\mathcal{I}}),\ga)}(D_i \times D_i)\Big) \nonumber\\
&=:  \frac{m_n^d}{n^w}\sigma_{\mathcal{B}(\bs 0,\gamma)}^2(D_i),\quad\nto. \label{varpm3}
\end{align}

\brem
For comparison we recall the expression in the corresponding Lemma~5.1 of \citet{buhl3}, where $\mathcal{F}$ is not fixed, but part of the increasing regular grid. 
Then $|\mathcal{F}(\bs h_{\mathcal{F}}^{(i)})| \sim \textnormal{N}_{\mathcal{F}}^{(i,i)}(\bs \ell_{\mathcal{F}}) \sim n^q$ as $\nto$, such that \eqref{varpm0} can be approximated as follows: 
\begin{align*}
&\var\big[\widehat{\mu}_{\mathcal{B}(\bs 0,\gamma),m_n}(D_i)\big] \\
\sim & \frac{m_n^d}{n^w n^q} \Big(\mu_{\mathcal{B}(\bs 0,\ga)}(D_i)+\sum_{\bs \ell_{\mathcal{I}} \in \mathbb{Z}^w} \sum_{\bs \ell_{\mathcal{F}} \in \mathbb{Z}^q \atop (\bs \ell_{\mathcal{F}}, \bs \ell_{\mathcal{I}}) \neq \bs 0} \tau_{\mathcal{B}(\bs 0,\ga)\times \mathcal{B}((\bs \ell_{\mathcal{F}}, \ell_{\mathcal{I}}),\ga)}(D_i \times D_i)\Big) \\
= & \Big(\frac{m_n}{n}\Big)^d \Big(\mu_{\mathcal{B}(\bs 0,\ga)}(D_i)+\sum_{\bs \ell \in \mathbb{Z}^d \setminus \{\bs 0\}} \tau_{\mathcal{B}(\bs 0,\ga)\times \mathcal{B}(\bs \ell,\ga)}(D_i \times D_i)\Big),\quad \nto. 
\end{align*}
Thus, a difference from the setting of a partly fixed observation area $\mathcal{F}\subset\cald_n$ is that the fixed observation terms do not disappear asymptotically, but remain as  constants in the limit expression.
\erem

\subsection{The extremogram for regularly varying processes}\label{s41}

For proving asymptotic normality of the empirical extremogram we have to require appropriate mixing conditions and make use of a large/small block argument as in \cite{buhl3}.
For simplicity we assume that $n^w/m_n^d$ is an integer and subdivide $\mathcal{D}_n$ into $n^w/m_n^d$ non-overlapping $d$-dimensional large blocks $\mathcal{F} \times \mathcal{B}_i$ for $i=1,\ldots,n^w/m_n^d$, where the $\mathcal{B}_i$ are $w$-dimensional cubes with side lengths $m_n^{d/w}$.
 From those large blocks we then cut off smaller blocks, which consist of the first $r_n$ elements in each of the $w$ increasing dimensions. The large blocks are then separated (by these small blocks) with at least the distance $r_n$ in all $w$ increasing dimensions and shown to be asymptotically independent. 
{Such large/small block arguments are common in verifying properties of estimators in extreme value theory, in particular in a time series context, cf. for example \citet{Davis2}, Section~6. For a visualization in the $2$-dimensional case $d=2$ with $w=1$ increasing dimension, see Figure~\ref{blocks_bigsmall}.}

\begin{figure}
\centering
\includegraphics[scale=0.45]{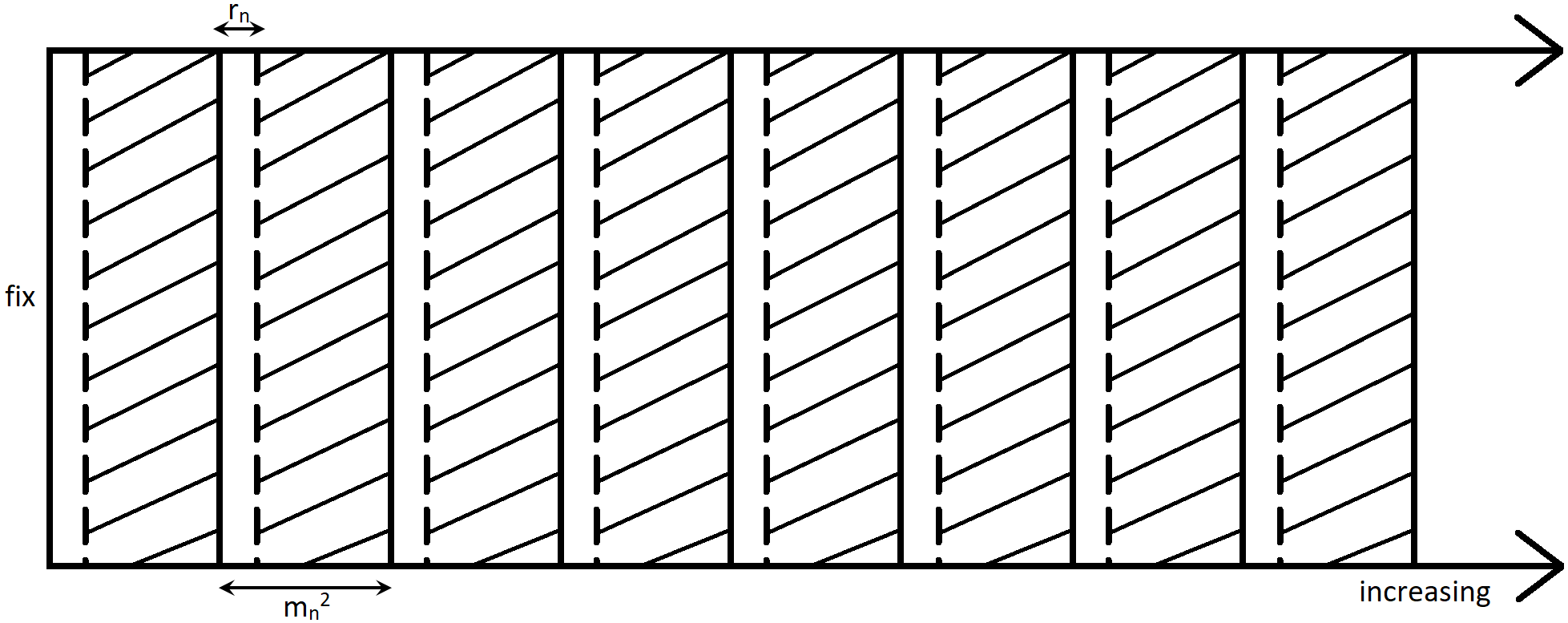}
\caption{Visualization of the large/small block argument in the case $d=2$ and $w=1$. The large blocks are the hatched areas; the small blocks are given by the small areas between them.} \label{blocks_bigsmall}
\end{figure}

In order to formulate the CLT below, in particular, the asymptotic covariance matrix, we need to compute $\cov[\widehat{\mu}_{\mathcal{B}(\bs 0,\ga),m_n}(D_i),\widehat{\mu}_{\mathcal{B}(\bs 0,\ga),m_n}(D_j)]$ for possibly different $i,j \in \{1,\ldots,p\}$. 
The asymptotic results stated in Theorem~\ref{stasyn} extend those in Theorem~4.2 of \cite{buhl3}, where the observation area $\mathcal{D}_n$ is assumed  to increase with $n$ in all dimensions. 
The decomposition \eqref{observed} into a fixed domain $\mathcal{F}$ and an increasing domain $\cali_n$ results in mixing conditions which focus on properties for $\cali_n$ increasing to $\Z^w$,  while $\calf$ remains fix and appears in the limit, similarly as in Eq.~\eqref{varpm0}.

\bthe\label{stasyn}
Let $\{X(\bs s): \bs s \in \mathbb{R}^d\}$ be a strictly stationary regularly varying  process, which is observed on $\mathcal{D}_n=\mathcal{F} \times \mathcal{I}_n$ as in \eqref{observed}. Let $\calh=\{\bs h^{(1)},\ldots,\bs h^{(p)}\} \subset \mathcal{B}(\bs 0,\ga)$ for some $\ga>0$ be a set of observed lag vectors. Suppose that the following conditions are satisfied.
\begin{enumerate}[(M1)]
\item 
$\{X(\bs{s}): \bs{s}\in\bbr^d\}$ is $\alpha$-mixing with respect to $\R^w$ with mixing coefficients $\alpha_{k_1,k_2}(\cdot)$ defined in \eqref{alphaBolt}.
\end{enumerate}
There exist sequences $m_n, r_n \to \infty$ with $m_n^d/n^w \to 0$ and $r_n^w/m_n^d \to 0$ as $\nto$ such that:
\begin{enumerate}[(M1)]
\setcounter{enumi}{1}
\item 
$m_n^{2d}r_n^{2w}/n^w \to 0$.
\label{cond0}
\item
For all $\epsilon>0$, and for all fixed $\bs \ell_{\mathcal{F}} \in \mathbb{R}^q$
with $a_m=a_{m_n}\to\infty$ as in \eqref{regvar},\\[2mm]
\hspace*{-0.7cm}$\lim\limits_{\kto } \limsup\limits_{\nto }
\sum\limits_{\bs \ell_{\mathcal{I}} \in \mathbb{Z}^w \atop k< \|\bs \ell_{\mathcal{I}}\| \leq r_n} 
 m_n^d \, \mathbb{P}(\max\limits_{\bs s \in \mathcal{B}(\bs 0,\gamma)} 
|X(\bs s)|>\epsilon a_m, \max\limits_{\bs s' \in \mathcal{B}((\bs \ell_{\mathcal{F}},\bs \ell_{\mathcal{I}}),\gamma)} |X(\bs s')|>\epsilon a_m)=0$. \label{cond1}
\item 
\begin{enumerate}[(i)]
\item 
$\lim\limits_{\nto } m_n^d \sum\limits_{\bs \ell \in \mathbb{Z}^w : \Vert{\bs \ell}\Vert > r_n} \alpha_{1, 1}(\|\bs \ell\|)=0$, \label{cond2} 
\item 
$\sum\limits_{\bs \ell \in \mathbb{Z}^w} \alpha_{k_1,k_2}(\Vert{\bs \ell}\Vert) < \infty$ \ \mbox{ for } $2\le k_1+k_2\le 4$, \label{cond3}
\item 
$\lim\limits_{\nto }  m_n^{d/2} n{^{w/2}} \ \alpha_{1,{n^w}}(r_n)=0$. \label{cond4} 
\end{enumerate}
\end{enumerate}
Then the empirical extremogram $\wh{\rho}_{AB,m_n}$ defined in \eqref{EmpEst}, sampled at lags in $\calh$ and centred by the pre-asymptotic extremogram $\rho_{AB,m_n}$ given in \eqref{preasymptotic}, is asymptotically normal; i.e.,
\begin{equation}
\sqrt{\frac{n^w}{m_n^d}}\Big[\widehat{\rho}_{AB,m_n}(\bs h^{(i)}) -\rho_{AB,m_n}(\bs h^{(i)})\Big]_{i=1,\ldots,p} \std \mathcal{N}(\bs 0,\Pi),\quad \nto,\label{asyspace}
\end{equation}
where
 $\Pi = \mu(A)^{-4} F\Sigma F\trans \in \mathbb{R}^{p \times p}$. Writing $\bs h^{(i)}=(\bs h_{\mathcal{F}}^{(i)},\bs h_{\mathcal{I}}^{(i)})$ for $1 \leq i \leq p+1$, with the convention that $(\bs h_{\mathcal{F}}^{(p+1)},\bs h_{\mathcal{I}}^{(p+1)})=\bs 0$,  {and recalling  \eqref{lag_fix_2} and \eqref{lag_appearance_fix},}
 the matrix $\Sigma \in \mathbb{R}^{(p+1) \times (p+1)}$ has components {
\begin{align}\label{SigmaCoeff2}
\Sigma_{ij} =& \frac{1}{|\mathcal{F}(\bs h_{\mathcal{F}}^{(i)})||\mathcal{F}(\bs h_{\mathcal{F}}^{(j)})|} \Big(|\mathcal{F}(\bs h_{\mathcal{F}}^{(i)}) \cap \mathcal{F}(\bs h_{\mathcal{F}}^{(j)})| \mu_{\mathcal{B}(\bs 0,\ga)}(D_i \cap D_j)\\
&+\sum_{\bs \ell_{\mathcal{I}} \in \mathbb{Z}^w} \sum_{\bs \ell_{\mathcal{F}} \in L_{\mathcal{F}}^{(i,j)} \atop (\bs \ell_{\mathcal{F}}, \bs \ell_{\mathcal{I}}) \neq \bs 0} \textnormal{N}_{\mathcal{F}}^{(i,j)}(\bs \ell_{\mathcal{F}}) \,\tau_{\mathcal{B}(\bs 0,\ga)\times \mathcal{B}((\bs \ell_{\mathcal{F}}, \bs \ell_{\mathcal{I}}),\ga)}(D_i \times D_j)\Big), \quad 1 \leq i, j \leq p+1. \nonumber
\end{align}
If $i=j$, we have $\Sigma_{ii}=\sigma^2_{\mathcal{B}(\bs 0,\gamma)}(D_i)$ with $\sigma^2_{\mathcal{B}(\bs 0,\gamma)}(D_i)$ specified in \eqref{varpm3}.}
The matrix $F=[F_1,F_2]$ consists of a diagonal matrix $F_1$ and a vector $F_2$ in the last column:
\begin{align*}
F_1 = \textnormal{diag}(\mu(A)) \in \mathbb{R}^{p \times p}, \quad F_2 = (-\mu_{\mathcal{B}(\bs 0, \ga)}(D_1),\ldots,-\mu_{\mathcal{B}(\bs 0, \ga)}(D_p))\trans. 
\end{align*}
\ethe
Note that condition (M3) is the analogue of condition~(3.3) of \citet{Davis2} in the time series case and thus similar in spirit but weaker than the classical anti-clustering condition $D'(\epsilon a_n)$ {as explained there.}

\bco\label{LLN}
Assume the setting of Theorem~\ref{stasyn} and
suppose that the following conditions are satisfied.
\begin{enumerate}[(1)]
\item 
$\{X(\bs{s}): \bs{s}\in\bbr^d\}$ is $\alpha$-mixing with respect to $\R^w$ with mixing coefficients $\alpha_{k_1,k_2}(\cdot)$ defined in \eqref{alphaBolt}.
\item
There exist sequences $m:=m_n, r:=r_\nto$ with $m_n^d/n^w \to 0$ and $r_n^w/m_n^d \to 0$ as $\nto$ such that (M3) and (M4i) hold.
\end{enumerate}
Then, as $\nto$,
\begin{align*}
\widehat{\rho}_{AB,m_n}(\bs h^{(i)}) \stp \rho_{AB}(\bs h^{(i)}), \quad i=1,\ldots,p, 
\end{align*} 
\eco

\bproof
As in part II of the proof of Theorem~\ref{stasyn} (cf. Appendix~\ref{app_1}), we find that for $i=1,\ldots,p$, as $\nto$,
$$\widehat{\rho}_{AB,m_n}(\bs h^{(i)}) \sim \frac{\widehat{\mu}_{\mathcal{B}(\bs 0,\ga),m_n}(D_i)}{\widehat{\mu}_{\mathcal{B}(\bs 0,\ga),m_n}(D_{p+1})} \stp \frac{{\mu}_{\mathcal{B}(\bs 0,\ga)}(D_i)}{{\mu}_{\mathcal{B}(\bs 0,\ga)}(D_{p+1})}=\rho_{AB}(\bs h^{(i)}),$$
where the sets $D_i$ and $D_{p+1}$ are defined in \eqref{Di}. Convergence in probability follows by Lemma~\ref{st-asy_la} and Slutzky's theorem. The last identity holds by definitions~\eqref{extremo} and~\eqref{B6}, recalling that ${\mu}_{\mathcal{B}(\bs 0,\ga)}(D_{p+1})=\mu(A)>0$.
\eproof

\brem\label{rem4.3}
(i) \, If the choice $m_n=n^{\beta_1}$ and $r_n=n^{\beta_2}$ with $0 < \beta_2 < \beta_1d/w <1$ satisfies conditions (M3) and (M4),
then for $\beta_1 \in (0,w/(2d))$ and $\beta_2 \in (0,\min\{\beta_1d/w;1/2-\beta_1d/w\})$ the condition (M2) also holds and we obtain the CLT \eqref{asyspace}. \\[2mm]
(ii) \, The pre-asymptotic extremogram \eqref{preasymptotic} in the CLT \eqref{asyspace} can be replaced by the true one \eqref{extremo}, if  the  pre-asymptotic extremogram converges to the true extremogram with the same convergence rate; i.e., if 
\begin{equation}\label{condition5}
\sqrt{\frac{n^w}{m_n^d}}\Big[\rho_{AB,m_n}(\bs h^{(i)}) - \rho_{AB}(\bs h^{(i)})\Big]_{i=1,\ldots,p} \to \bs 0,\quad n\to \infty.
\end{equation}
(iii) \, Unfortunately, for general regularly varying processes, it is not known if the bias condition~\eqref{condition5} holds, but the CLT~\eqref{asyspace} based on the pre-asymptotic extremogram holds. 
Hence,  the important asymptotic interpretation of the empirical extremogram as a conditional probability of extremal events remains; {cf. \citet{cho}, \citet{Davis2}, and \citet{Drees} and references therein.}
An important class of processes, where we know conditions such that \eqref{condition5} is satisfied or not, are the {max-stable processes} with finite-dimensional Fr{\'e}chet marginal distributions, as defined in Section~\ref{s42}.
\erem

\subsection{The extremogram of processes with Fr{\'e}chet marginal distributions}\label{s42}

We start with the definition of {max-stable processes}.

\begin{definition}[{Max-stable process}]
A process $\{X(\bs s): \bs s \in \bbr^d\}$ is called max-stable if there exist sequences $c_n(\bs s)>0$ and $d_n(\bs s)$ for $\bs s \in \bbr^d$ and $n \in \bbn$ such that
\begin{align}
\Big\{c_n^{-1}(\bs s)\Big(\bigvee_{j=1}^n X_j(\bs s)-d_n(\bs s)\Big): \bs s \in \bbr^d\Big\} \stackrel{d}{=} \{X(\bs s): \bs s \in \bbr^d\}, \label{max_stable_intro}
\end{align}
where $\{X_j(\bs s): \bs s \in \bbr^d\}$ are independent replicates of $\{X(\bs s): \bs s \in \bbr^d\}$ and the maximum is taken componentwise.
\end{definition}
If max-stable processes have Fr{\'e}chet marginal distributions, they are regularly varying.
 Theorem~\ref{CLT_True}  below states a necessary and sufficient condition for such processes such that both \eqref{asyspace} and \eqref{condition5} hold, yielding the CLT~\eqref{ohnebias} for the empirical extremogram~\eqref{EmpEst} centred by the the true one~\eqref{extremo}. 
 In case this condition is not satisfied, Theorem~\ref{Thm_asynbias} states conditions such that \eqref{ohnebias} holds for a bias corrected version of the empirical extremogram.


\begin{theorem}[CLT for processes with Fr{\'e}chet margins]\label{CLT_True}
Let $\{X(\bs s): \bs s \in \bbr^d\}$ be a strictly stationary max-stable process with 
standard unit Fr{\'e}chet margins, which is  observed on $\cald_n=\calf\times\cali_n$ as in \eqref{observed}.
Let $\calh=\{\bs h^{(1)},\ldots,\bs h^{(p)}\} \subset \mathcal{B}(\bs 0,\ga)$ for some $\ga>0$ be a set of observed lag vectors. 
Suppose that conditions (M1)--(M4) of Theorem~\ref{stasyn} hold for appropriately chosen sequences $m_n,r_n\to\infty$. 
Let $\rho_{AB}$ be the extremogram \eqref{extremo} and $\rho_{AB,m_n}$ the  pre-asymptotic version \eqref{preasymptotic} for sets $A=(\underline{A},\overline{A})$ and $B=(\underline{B},\overline{B})$ with $0<\underline{A}<\overline{A} \leq \infty$ and $0 <\underline{B}<\overline{B} \leq \infty.$  
Then the limit relation \eqref{condition5} holds if and only if $n^w/m_n^{3d} \to 0$ as $\nto$.
In this case we obtain
\begin{equation}
\sqrt{\frac{n^w}{m_n^d}}\Big[\widehat{\rho}_{AB,m_n}(\bs h^{(i)}) -\rho_{AB}(\bs h^{(i)})\Big]_{i=1,\ldots,p} \std \mathcal{N}(\bs 0,\Pi),\quad \nto,\label{asyspace_BR_true}
\end{equation}
with $\Pi$ specified in Theorem~\ref{stasyn}.
\end{theorem}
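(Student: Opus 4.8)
\emph{Proof plan.} The idea is to determine the exact rate of the deterministic bias $\rho_{AB,m_n}(\bs h)-\rho_{AB}(\bs h)$ from the bivariate max-stable structure, and then to read off both the equivalence and the CLT. Part of the claim is purely formal: writing
\[
\sqrt{\tfrac{n^w}{m_n^d}}\big[\wh\rho_{AB,m_n}(\bs h^{(i)})-\rho_{AB}(\bs h^{(i)})\big]_{i}
= \sqrt{\tfrac{n^w}{m_n^d}}\big[\wh\rho_{AB,m_n}(\bs h^{(i)})-\rho_{AB,m_n}(\bs h^{(i)})\big]_{i}
+ \sqrt{\tfrac{n^w}{m_n^d}}\big[\rho_{AB,m_n}(\bs h^{(i)})-\rho_{AB}(\bs h^{(i)})\big]_{i},
\]
the first summand converges in distribution to $\mathcal N(\bs 0,\Pi)$ by Theorem~\ref{stasyn}, while the second is deterministic; so once \eqref{condition5} is shown to hold, Slutsky's theorem yields \eqref{asyspace_BR_true}. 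It therefore remains to analyse the bias and to characterise when \eqref{condition5} holds.

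First I would record that for unit Fr\'echet margins $\mathbb{P}(X(\bs 0)>x)\sim 1/x$, so that $\beta=1$ and the norming constants satisfy $a_{m_n}\sim m_n^d$. Fix a lag $\bs h$ and let $V_{\bs h}$ denote the exponent function of the pair $(X(\bs 0),X(\bs h))$, i.e.\ $\mathbb{P}(X(\bs 0)\le x,X(\bs h)\le y)=\exp(-V_{\bs h}(x,y))$, homogeneous of degree $-1$ with margins $V_{\bs h}(x,\infty)=1/x$ and $V_{\bs h}(\infty,y)=1/y$. Writing $u:=a_{m_n}$ and applying inclusion--exclusion to the rectangle $uA\times uB$ together with the homogeneity $V_{\bs h}(xu,yu)=u^{-1}V_{\bs h}(x,y)$, I would Taylor-expand each factor $\exp(-u^{-1}V_{\bs h}(\cdot,\cdot))$ in powers of $1/u$. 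In the four-fold difference the constant terms cancel; the $1/u$-coefficients reproduce $\mu_{\{\bs 0,\bs h\}}(A\times B)$ in the numerator and $\mu(A)$ in the denominator, so that the leading ratio is exactly $\rho_{AB}(\bs h)$, while the $1/u^2$-coefficients yield an explicit correction. Dividing the two expansions gives
\[
\rho_{AB,m_n}(\bs h)-\rho_{AB}(\bs h)=\frac{K(\bs h)}{a_{m_n}}\,(1+o(1)),\qquad \nto,
\]
with a constant $K(\bs h)$ read off from the second-order coefficients; for the canonical tail-dependence choice $A=B=(1,\infty)$ this reduces to the clean expression $K(\bs h)=\tfrac12\,V_{\bs h}(1,1)\big(V_{\bs h}(1,1)-1\big)$.

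Since $a_{m_n}\sim m_n^d$, the bias plugged into \eqref{condition5} becomes
\[
\sqrt{\tfrac{n^w}{m_n^d}}\,\big(\rho_{AB,m_n}(\bs h)-\rho_{AB}(\bs h)\big)=K(\bs h)\,\sqrt{\tfrac{n^w}{m_n^{3d}}}\,(1+o(1)),
\]
so the ``if'' direction is immediate: $n^w/m_n^{3d}\to0$ forces every component to vanish. For the ``only if'' direction I need the leading coefficient to be genuinely nonzero for at least one lag, and this is precisely where the max-stable structure enters: for any non-degenerate lag one has $V_{\bs h}(1,1)\in(1,2]$ (the value $1$ occurring only under complete dependence $X(\bs h)=X(\bs 0)$ and $2$ under independence), whence $K(\bs h)\neq0$; the analogous computation for general finite endpoints $A=(\underline A,\overline A)$, $B=(\underline B,\overline B)$ leads to the same conclusion. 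Consequently \eqref{condition5} can hold only if $n^w/m_n^{3d}\to0$, which establishes the equivalence.

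I expect the main obstacle to be the second-order bookkeeping and, above all, verifying that $K(\bs h)$ does not vanish, since this non-degeneracy is exactly what turns the easy sufficiency into a genuine equivalence. A minor additional point is to ensure that the $o(1)$ remainders in the expansions are uniform over the finitely many lags $\bs h^{(1)},\dots,\bs h^{(p)}$, which is automatic because $\calh$ is finite once each individual expansion is controlled.
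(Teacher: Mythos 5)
Your proposal follows essentially the same route as the paper: split off the deterministic bias, show via a second-order expansion of the bivariate exponent measure that $\rho_{AB,m_n}(\bs h)-\rho_{AB}(\bs h)$ is of exact order $a_{m_n}^{-1}\sim m_n^{-d}$, and conclude that the scaled bias behaves like $\sqrt{n^w/m_n^{3d}}$ times a constant, with Slutsky then giving \eqref{asyspace_BR_true}. The only difference is that the paper imports the expansion ready-made from Lemma~A.1(b) of \cite{buhl3} (your constant $K(\bs h)$ is exactly $\tfrac12\,\ov{V}^2_2(\bs h)$ there, as your computation for $A=B=(1,\infty)$ confirms), whereas you re-derive it and, usefully, make explicit the non-vanishing of the leading coefficient that the ``only if'' direction tacitly requires and the paper leaves implicit.
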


\begin{proof}
All finite-dimensional distributions are max-stable distributions with standard unit Fr\'echet margins, hence they are multivariate regularly varying. Furthermore we can choose $a_m = m_n^d$ in Definition~\ref{DefExtremo}.
Let $V_2(\bs h;\cdot,\cdot)$ be the bivariate exponent measure defined by $\mathbb{P}(X(\bs 0) \leq x_1,X(\bs h) \leq x_2)$ $=\exp\{-V_2(\bs h;x_1,x_2)\}$ for $x_1,x_2>0$, cf. \citet{Beirlant}, Section~8.2.2.
From Lemma~A.1(b) of \cite{buhl3} we know that for $\bs h \in \mathcal{H}$ and with $\ov{V}^2_2(\bs h):=\underline{A} \overline{A}/(\overline{A}-\underline{A})(V_2^2(\bs h;\overline{A},\overline{B}) - V_2^2(\bs h;\overline{A},\underline{B}) - V_2^2(\bs h;\underline{A},\overline{B}) +V_2^2(\bs h;\underline{A},\underline{B}))$,
\begin{align}
	\rho_{AB,m_n}(\bs h)=\rho_{AB}(\bs h) + (1+o(1))\Big[\frac1{2\,m_n^{d}}\ov{V}^2_2(\bs h)\Big],\quad\nto. \label{relpreasym}
\end{align}
If $\overline{A}=\infty$ and/or $\overline{B}=\infty$, appropriate adaptations need to be taken, which are described in Lemma~A.1 of \cite{buhl3}.
Hence, for $\bs h \in \mathcal{H}$,
 \begin{align*}
\sqrt{\frac{n^w}{m_n^d}}\big(\rho_{AB,m_n}(\bs h) - \rho_{AB}(\bs h)\big)  
=(1+o(1)) \sqrt{\frac{n^w}{m_n^{3d}}} \frac{\ov{V}^2_2(\bs h)}{2},\quad\nto,
 \end{align*}
which converges to 0 if and only if $n^w/m_n^{3d} \to 0$.
\end{proof}

If $n^w/m_n^{3d} \not\to 0$ in Theorem~\ref{CLT_True}, a CLT centred by the true extremogram can still be obtained for a bias corrected empirical estimator. Eq.~\eqref{relpreasym} is the basis for such a bias correction if the sets $A$ and $B$ are given by $A=(\underline{A},\infty)$ and $B=(\underline{B},\infty)$ with $\underline{A},\underline{B}>0$. In that case we have
\begin{align}\label{preasym_Fr2}
\rho_{AB,m_n}(\bs h)=&\rho_{AB}(\bs h) 
+ (1+o(1))\Big[\frac1{2\,m_n^{d}\underline{A}} \big(\rho_{AB}(\bs h)-2\underline{A}/ \underline{B}\big)\big(\rho_{AB}(\bs h)-1\big)\Big],\quad\nto;
\end{align} 
see \cite{buhl3}, Eq.~(A.4).
An asymptotically bias corrected estimator is given by
\begin{align*} 
\widehat{\rho}_{AB,m_n}(\bs h)-\frac{1}{2m_n^d\underline{A}}\big(\widehat{\rho}_{AB,m_n}\big(\bs h)-2\underline{A}/ \underline{B}\big)\big(\widehat{\rho}_{AB,m_n}(\bs h)-1\big)
\end{align*}
and we set, covering both cases, 
\begin{align}
&\wt{{\rho}}_{AB,m_n}(\bs h) :=\label{biascorrectedextremo} \\
&\begin{cases}
\wh{\rho}_{AB,m_n}(\bs h)-\dfrac{1}{2m_n^d\underline{A}}\big(\widehat{\rho}_{AB,m_n}(\bs h)-2\underline{A}/ \underline{B}\big)\big(\widehat{\rho}_{AB,m_n}(\bs h)-1\big) & \mbox{ if }  \frac{n^w}{m_n^{3d}} \not\to 0 \text{ but }\frac{n^w}{m_n^{5d}} \to 0, \nonumber\\[2mm]
\wh{\rho}_{AB,m_n}(\bs h) & \mbox{ if }\frac{n^w}{m_n^{3d}} \to 0. \nonumber
\end{cases} 
\end{align}
Theorem~\ref{Thm_asynbias} below guarantees asymptotic normality of the bias corrected extremogram for an---according to Theorem~\ref{stasyn}---valid sequence $m_n$ satisfying $n^w/m_n^{5d} \to 0$. 
The proof, which is given in Appendix~\ref{app_3}, generalises that of {Theorem~4.4} of \citet{Steinkohl3}, which covers the special case $\underline{A}=\underline{B}=1$ for Brown-Resnick processes.

\begin{theorem}[CLT for the bias corrected extremogram for processes with Fr{\'e}chet margins]\label{Thm_asynbias}
Let $\{X(\bs s): \bs s \in \bbr^d\}$ be a strictly stationary max-stable process with standard unit Fr{\'e}chet margins. 
Assume the situation of Theorem~\ref{CLT_True} for sets $A=(\underline{A},\infty)$ and $B=(\underline{B},\infty)$ with $\underline{A},\underline{B}>0$.
Then the bias corrected extremogram~\eqref{biascorrectedextremo} is asymptotically normal if and only if $n^w/m_n^{5d} \to 0$. In that case,
\begin{align}
\sqrt{\frac{n^w}{m_n^d}} \Big[\wt{{\rho}}_{AB,m_n}(\bs h^{(i)})-\rho_{AB}(\bs h^{(i)})\Big]_{i=1,\ldots,p} \std \mathcal{N}(\bs 0,\Pi), \label{asynbias_gen}
\end{align}
where $\Pi$ is specified in Theorem~\ref{stasyn}. 
\end{theorem}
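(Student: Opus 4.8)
The plan is to split along the two cases in the definition \eqref{biascorrectedextremo} of $\wt{\rho}_{AB,m_n}$. When $n^w/m_n^{3d}\to0$ the estimator coincides with $\wh{\rho}_{AB,m_n}$ and, since $m_n\to\infty$ forces $n^w/m_n^{5d}\to0$ as well, the assertion \eqref{asynbias_gen} is exactly the conclusion \eqref{asyspace_BR_true} of Theorem~\ref{CLT_True}; nothing further is needed. The substance lies in the regime $n^w/m_n^{3d}\not\to0$ together with $n^w/m_n^{5d}\to0$, where the quadratic correction is active. Throughout I abbreviate $g(x):=\tfrac{1}{2\underline{A}}(x-2\underline{A}/\underline{B})(x-1)$, so that the correction term in \eqref{biascorrectedextremo} equals $m_n^{-d}g(\wh{\rho}_{AB,m_n}(\bs h))$. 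Fix $i$, suppress the argument $\bs h^{(i)}$, and write $\Delta_i:=\wh{\rho}_{AB,m_n}-\rho_{AB,m_n}$ for the centred empirical extremogram.

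First I would record the key decomposition, obtained by inserting $\pm\rho_{AB,m_n}$ and $\pm m_n^{-d}g(\rho_{AB})$,
\[
\wt{\rho}_{AB,m_n}-\rho_{AB}
=\Delta_i+\big(\rho_{AB,m_n}-\rho_{AB}-\tfrac{1}{m_n^d}g(\rho_{AB})\big)
-\tfrac{1}{m_n^d}\big(g(\wh{\rho}_{AB,m_n})-g(\rho_{AB})\big).
\]
The leading term, jointly over $i=1,\dots,p$ and scaled by $\sqrt{n^w/m_n^d}$, converges to $\mathcal N(\bs 0,\Pi)$ by Theorem~\ref{stasyn}; this supplies both the Gaussian limit and the covariance $\Pi$, the correction affecting only lower-order (mean) terms. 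It remains to show that, after multiplication by $\sqrt{n^w/m_n^d}$, the two bracketed remainders are $o_P(1)$.

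The second remainder is handled by the mean value theorem: since $g$ is a fixed polynomial and $\wh{\rho}_{AB,m_n}\stp\rho_{AB}$ by Corollary~\ref{LLN}, one has $g(\wh{\rho}_{AB,m_n})-g(\rho_{AB})=O_P(1)\cdot(\wh{\rho}_{AB,m_n}-\rho_{AB})$, and $\wh{\rho}_{AB,m_n}-\rho_{AB}=\Delta_i+(\rho_{AB,m_n}-\rho_{AB})=O_P(\sqrt{m_n^d/n^w})+O(m_n^{-d})$ by \eqref{preasym_Fr2}; multiplying by $\sqrt{n^w/m_n^d}\cdot m_n^{-d}$ yields $O_P(m_n^{-d})+O_P(\sqrt{n^w/m_n^{5d}})=o_P(1)$. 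The first remainder is the genuinely delicate point: the expansion \eqref{preasym_Fr2}, stated with a factor $(1+o(1))$, is only first-order accurate and does not control $R_i:=\rho_{AB,m_n}-\rho_{AB}-m_n^{-d}g(\rho_{AB})$ at the required rate $R_i=o(\sqrt{m_n^d/n^w})$. I would therefore sharpen it to a genuine second-order expansion $\rho_{AB,m_n}(\bs h)=\rho_{AB}(\bs h)+m_n^{-d}g(\rho_{AB}(\bs h))+O(m_n^{-2d})$, derived along the lines of Lemma~A.1 of \citet{buhl3}: with $a_m=m_n^d$ one writes numerator and denominator of $\rho_{AB,m_n}$ through inclusion--exclusion in the bivariate exponent measure $V_2(\bs h;\cdot,\cdot)$, uses its homogeneity of degree $-1$ together with the unit Fr\'echet margins $V_2(\bs h;x,\infty)=1/x$, and Taylor-expands the exponentials $\exp\{-V_2(\cdot)/m_n^d\}$ to second order, all arguments being $O(m_n^{-d})$. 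Granted this, $R_i=O(m_n^{-2d})$ and $\sqrt{n^w/m_n^d}\,R_i=O(\sqrt{n^w/m_n^{5d}})\to0$ under the hypothesis. Combining the three estimates and invoking Slutsky's theorem together with the joint convergence of $(\Delta_1,\dots,\Delta_p)$ from Theorem~\ref{stasyn} gives \eqref{asynbias_gen}.

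Finally, the ``only if'' direction follows from the same second-order expansion read in reverse: after the correction the surviving deterministic bias is exactly the $O(m_n^{-2d})$ term, with a coefficient that is a fixed and (for the relevant sets $A,B$) generically nonzero function of the exponent-measure values $V_2(\bs h;\cdot,\cdot)$, so that $\sqrt{n^w/m_n^d}$ times the residual bias is asymptotic to that constant times $\sqrt{n^w/m_n^{5d}}$; this fails to vanish precisely when $n^w/m_n^{5d}\not\to0$, in which case the centred limit cannot be $\mathcal N(\bs 0,\Pi)$. I expect the main obstacle to be establishing the sharp second-order expansion with a controlled $O(m_n^{-2d})$ remainder and then propagating it correctly through the nonlinear (quadratic) correction evaluated at the random argument $\wh{\rho}_{AB,m_n}$, since the naive first-order statement \eqref{preasym_Fr2} is insufficient at the scaling dictated by $n^w/m_n^{5d}\to0$.
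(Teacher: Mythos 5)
Your proposal is correct and follows essentially the same route as the paper's proof in Appendix~\ref{app_3}: both decompose $\wt{\rho}_{AB,m_n}-\rho_{AB}$ into the CLT term $\wh{\rho}_{AB,m_n}-\rho_{AB,m_n}$ handled by Theorem~\ref{stasyn}, a stochastic correction term (your mean-value-theorem bound; the paper's explicit factorisation of the difference of quadratics in $A_1$) that vanishes after scaling for any admissible $m_n$, and a deterministic residual bias of order $m_n^{-2d}$ whose scaled version is asymptotic to a constant times $\sqrt{n^w/m_n^{5d}}$, yielding the if-and-only-if claim exactly as in the paper's term $A_2$. The one place you go beyond the paper is in insisting that \eqref{preasym_Fr2} be sharpened from a $(1+o(1))$ first-order statement to a genuine second-order expansion with $O(m_n^{-2d})$ remainder; this is a legitimate and worthwhile refinement, since in the regime $n^w/m_n^{3d}\to\infty$ (allowed when only $n^w/m_n^{5d}\to 0$ is assumed) an uncontrolled $o(m_n^{-d})$ remainder would not suffice, and the paper's proof implicitly relies on the same quantitative control when it tracks the $A_2$ term.
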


\brem \label{summary_cases}
From Theorems~\ref{CLT_True} and~\ref{Thm_asynbias} in relation to Remark~\ref{rem4.3}~(i) we deduce two cases:\\
 (I) For  $w/(5d)<\beta_1 \leq w/(3d)$ we cannot replace the pre-asymptotic extremogram by the theoretical version in \eqref{asyspace_BR_true}, but can resort to a bias correction as described in \eqref{biascorrectedextremo} to obtain 
\beam\label{asynbias}
 n^{(w-\beta_1d)/2}\Big[\wt{\rho}_{AB,m_n}(\bs h^{(i)}) -\rho_{AB}(\bs h^{(i)})\Big]_{i=1,\ldots,p} \std \mathcal{N}(\bs 0,\Pi),\quad \nto,
\eeam
for sets $A=(\underline{A},\infty)$ and $B=(\underline{B},\infty)$ with covariance matrix $\Pi$ specified in Theorem~\ref{stasyn}.\\
 (II) For $w/(3d)<\beta_1<w/(2d)$ we obtain indeed
 \beam\label{ohnebias}
 n^{(w-\beta_1d)/2}\Big[\widehat{\rho}_{AB,m_n}(\bs h^{(i)}) -\rho_{AB}(\bs h^{(i)})\Big]_{i=1,\ldots,p} \std \mathcal{N}(\bs 0,\Pi),\quad \nto,
 \eeam
with covariance matrix $\Pi$ specified in Theorem~\ref{stasyn}.
\erem
Observe that Remark~\ref{summary_cases} generalises {Remark~4.1} of \cite{Steinkohl3}.

\section{Generalised least squares extremogram estimates}\label{s5}
In this section we fit parametric models to the empirical extremogram using least squares techniques for the parameter estimation. 
Our approach and extremogram models extend the
weighted least squares estimation developed in \citet{steinkohlphd} and \citet{Steinkohl3} considerably.
In these papers {isotropic space-time models such as the Brown-Resnick model (I) of Section~\ref{s32} below have been estimated by separation of space and time, which is not possible for  all models of interest.}
In what follows we present generalised least squares approaches to fit general parametric extremogram models
taking the observation scheme $\cald_n=\calf \times \cali_n$ of a fixed and an increasing domain into account. 
The approach bears some similarity to the semiparametric variogram estimation in \citet{Lahiri2}. 

 Our setting is as follows. 
 Let  $\{\rho_{AB, \bs \theta}(\bs h): \bs h \in \mathbb{R}^{d}, \bs \theta \in \Theta\}$ be some parametric {valid} extremogram model with parameter space $\Theta$ and continuous in $\bs h \in \mathbb{R}^{d}$. 
 Assume that $\rho_{AB}(\cdot)=\rho_{AB,\bs \theta^{\star}}(\cdot)$ with true parameter vector $\bs \theta^{\star}$, which lies by assumption in the interior of $\Theta$. 
Denote by $\widehat{\rho}_{AB,m_n}(\bs h)$ any of the estimators of Theorem~\ref{stasyn}, Theorem~\ref{CLT_True}, or Theorem~\ref{Thm_asynbias} for the appropriately chosen $\mu$-continuous Borel sets $A$ and $B$ such that $\mu(A)>0$ and lags $\bs h \in \mathcal{H}=\{\bs h^{(1)},\ldots,\bs h^{(p)}\}$.

First note that under the much weaker conditions of Corollary~\ref{LLN} the empirical extremogram is a consistent estimator of the extremogram such that as $n \rightarrow \infty$,
\begin{align}
\widehat{\rho}_{AB,m_n}(\bs h^{(i)}) \stp \rho_{AB,\bs \theta^{\star}}(\bs h^{(i)}), \quad i=1,\ldots,p, \label{consthetastar}
\end{align}
Under more restrictive conditions needed for the three CLTs above,
\begin{align}
\sqrt{\frac{n^w}{m_n^d}} \Big[\widehat{\rho}_{AB,m_n}(\bs h^{(i)})-\rho_{AB,\bs \theta^{\star}}(\bs h^{(i)})\Big]_{i=1,\ldots,p} \std \mathcal{N}(\bs 0,\Pi), \label{asynthetastar}
\end{align} 
where $\Pi$ is the  covariance matrix specified in Theorem~\ref{stasyn}.

As we shall prove below, consistency of the empirical extremogram entails consistent generalised least squares parameter estimates, whereas asymptotic normality of the empirical extremogram entails asymptotically normal generalised least squares parameter estimates.

\begin{definition}[Generalised least squares extremogram estimator (GLSE)] \label{DefGLSE}
Let $\{X(\bs{s}): \bs{s} \in \mathbb{R}^d\}$ be a strictly stationary regularly varying  process, which is observed on $\mathcal{D}_n=\mathcal{F} \times \mathcal{I}_n$ as in \eqref{observed}. 
Let $A$ and $B$ be $\mu$-continuous Borel sets  in $\overline{\mathbb{R}}\backslash\{0\}$ such that $\mu(A)>0$. 
For a sequence $m=m_n \rightarrow \infty$ and $m_n=o(n)$ as $\nto$ define for $\bs\theta\in\Theta$ the column vector 
\begin{align}
\widehat{\bs g}_n(\bs \theta):=\big[\widehat{\rho}_{AB,m_n}(\bs h^{(i)})-\rho_{AB, \bs \theta}(\bs h^{(i)})\big]\trans_{i=1,\ldots,p}. \label{g_hat}
\end{align}
For some non-singular positive definite weight matrix $V(\bs \theta) \in \mathbb{R}^{p \times p}$, the \GLSE\ is defined as 
\begin{align}
\widehat{\bs \theta}_{n,V}:=\argmin\limits_{\bs \theta \in \Theta}\{\widehat{\bs g}_n(\bs \theta)\trans V(\bs \theta) \widehat{\bs g}_n(\bs \theta)\}. \label{GLSE}
\end{align}
\end{definition}

Assumption~\ref{regcondgls} presents a set of conditions, which imply consistency and asymptotic normality of the GLSE.

\begin{assumption} \label{regcondgls}
Assume the situation of Definition~\ref{DefGLSE}. We shall require the following conditions.
\begin{enumerate}[(G1)]
\item Consistency: 
$\widehat{\rho}_{AB,m_n}(\bs h^{(i)}) \stp \rho_{AB,\bs \theta^{\star}}(\bs h^{(i)})$ as $\nto$ for $i=1,\ldots,p.$
\item Asymptotic normality: 
$\sqrt{\dfrac{n^w}{m_n^d}}\widehat{\bs g}_n(\bs \theta^\star) \std \mathcal{N}(\bs 0,\Pi)$ as $\nto$. 
\item \label{glscond1} 
Identifiability condition: For all $\epsilon>0$ there exists some $\delta>0$ such that \\
$\inf\Big\{\sum\limits_{i=1}^p (\rho_{AB,\bs \theta_1}(\bs h^{(i)})-\rho_{AB, \bs \theta_2}(\bs h^{(i)}))^2: \bs \theta^{(1)}, \bs \theta^{(2)} \in \Theta, \Vert{\bs \theta^{(1)}-\bs \theta^{(2)}}\Vert \geq \epsilon\Big\}> \delta.$ 
If the parameter space $\Theta$ is compact, this condition can be replaced by the weaker condition
$$(G3') \quad \sum\limits_{i=1}^p (\rho_{AB,\bs \theta_1}(\bs h^{(i)})-\rho_{AB, \bs \theta_2}(\bs h^{(i)}))^2>0, \quad \bs \theta^{(1)} \neq \bs \theta^{(2)} \in \Theta.$$
\item \label{glscond2}Smoothness condition 1: For all $i=1,\ldots,p$: 

$\rho_{AB,\bs \theta}(\bs h^{(i)})$ has continuous partial derivatives of order $z_1 \geq 0$ w.r.t. $\bs \theta$,
where $z_1=0$ corresponds to $\rho_{AB,\bs \theta}(\bs h^{(i)})$ being continuous in $\bs \theta$.

\item 
\label{glscond3}Smoothness condition 2: 
\begin{enumerate}[(i)]
\item $\sup\limits_{\bs \theta \in \Theta}\{\|V(\bs \theta)\|_M+\|V(\bs \theta)^{-1}\|_M\} < \infty,$ where $\|\cdot\|_M$ is some arbitrary matrix norm.
\item The matrix valued function $V(\bs \theta)$ has continuous derivatives of order $z_2 \geq 0$ w.r.t. $\bs \theta$, where
 $z_2=0$ corresponds to $V(\bs \theta)$ being continuous in $\bs \theta$.
\end{enumerate}
\item
Rank condition:
For $\bs \theta=(\theta_1, \ldots, \theta_k)\in\Theta\subset\R^k$ we denote by 
$\Rho_{AB}(\bs \theta)$ the Jacobian matrix of $(-\rho_{AB,\bs \theta}(\bs h^{(1)}),\ldots,-\rho_{AB,\bs \theta}(\bs h^{(p)}))\trans$; i.e.,
\begin{align}
\Rho_{AB}(\bs \theta)=\begin{pmatrix}
-\frac{\partial}{\partial \theta_{1}}\rho_{AB,\bs \theta}(\bs h^{(1)}) & -\frac{\partial}{\partial \theta_{2}}\rho_{AB,\bs \theta}(\bs h^{(1)}) & \ldots & -\frac{\partial}{\partial \theta_{k}}\rho_{AB,\bs \theta}(\bs h^{(1)}) \\
-\frac{\partial}{\partial \theta_{1}}\rho_{AB,\bs \theta}(\bs h^{(2)}) & -\frac{\partial}{\partial \theta_{2}}\rho_{AB,\bs \theta}(\bs h^{(2)}) & \ldots & -\frac{\partial}{\partial \theta_{k}}\rho_{AB,\bs \theta}(\bs h^{(2)}) \\
\vdots & \vdots & & \vdots \\
-\frac{\partial}{\partial \theta_{1}}\rho_{AB,\bs \theta}(\bs h^{(p)}) & -\frac{\partial}{\partial \theta_{2}}\rho_{AB,\bs \theta}(\bs h^{(p)}) & \ldots & -\frac{\partial}{\partial \theta_{k}}\rho_{AB,\bs \theta}(\bs h^{(p)})
\end{pmatrix} \in \mathbb{R}^{p \times k}. \label{Rho_matrix}
\end{align}
The Jacobian matrix has full rank: $\textnormal{rank}(\Rho_{AB}(\bs \theta^\star))=k$. 
\halmos
\end{enumerate}
\end{assumption}

The proof of the next theorem  can be found in Appendix~\ref{app_2}.

\begin{theorem}[Consistency and asymptotic normality of the GLSE]\label{GLSEcons}
Assume the situation of Definition~\ref{DefGLSE}.
If Assumptions~\ref{regcondgls}(G1) and (G3) hold as well as (G4) and (G5) for $z_1=z_2=0$, respectively, then the GLSE is consistent; i.e.,
\begin{align}
\widehat{\bs \theta}_{n,V} \stp \bs \theta^{\star}, \quad n \rightarrow \infty. \label{GLSEcons2}
\end{align}
If Assumption~\ref{regcondgls}(G2) and (G3) hold as well as (G4) and (G5) for $z_1=z_2=1$, respectively, and the rank condition (G6) holds, then the GLSE is asymptotically normal; i.e., 
\begin{align}\label{asymptGLS}
\sqrt{\frac{n^w}{m_n^d}}(\widehat{\bs \theta}_{n,V}-\bs \theta^{\star}) \std \mathcal{N}(\bs 0, \Pi_V),\quad\nto,
\end{align}
with asymptotic covariance matrix
\begin{align*}
\Pi_V=B(\bs \theta^{\star})\Rho_{AB}(\bs \theta^{\star})\trans [V(\bs \theta^{\star})+V(\bs \theta^{\star})\trans]\, \Pi \, [V(\bs \theta^{\star})+V(\bs \theta^{\star})\trans] \Rho_{AB}(\bs \theta^{\star})B(\bs \theta^{\star}),
\end{align*} 
where $B(\bs \theta^{\star}):=\big(\Rho_{AB}(\bs \theta^{\star})\trans[V(\bs \theta^{\star})+V(\bs \theta^{\star})\trans]\Rho_{AB}(\bs \theta^{\star})\big)^{-1}$ and $\Pi$ is the asymptotic covariance matrix in Eq.~\eqref{asynthetastar}.
\end{theorem}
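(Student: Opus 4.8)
The plan is to treat $\widehat{\bs\theta}_{n,V}$ as a minimum‑distance (extremum) estimator and run the two standard arguments: a quantitative separation argument for consistency, and a first‑order‑condition expansion for asymptotic normality. Throughout I abbreviate $Q_n(\bs\theta):=\widehat{\bs g}_n(\bs\theta)\trans V(\bs\theta)\widehat{\bs g}_n(\bs\theta)$ and write $\Rho:=\Rho_{AB}$ for the Jacobian in \eqref{Rho_matrix}. The key structural remark is that, since the empirical part of $\widehat{\bs g}_n$ in \eqref{g_hat} does not depend on $\bs\theta$, the Jacobian of $\widehat{\bs g}_n$ with respect to $\bs\theta$ is exactly $\Rho(\bs\theta)$.

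\textbf{Consistency.} First I would use (G5)(i) to note that the quadratic form $\bs x\trans V(\bs\theta)\bs x$ is comparable to $\|\bs x\|^2$ uniformly in $\bs\theta$, so there is $c>0$ with $Q_n(\bs\theta)\geq c\,\|\widehat{\bs g}_n(\bs\theta)\|^2$. By the defining minimisation \eqref{GLSE}, $Q_n(\widehat{\bs\theta}_{n,V})\leq Q_n(\bs\theta^\star)=\widehat{\bs g}_n(\bs\theta^\star)\trans V(\bs\theta^\star)\widehat{\bs g}_n(\bs\theta^\star)$, and the right‑hand side $\stp 0$ because $\widehat{\bs g}_n(\bs\theta^\star)\stp\bs 0$ by (G1) and $V(\bs\theta^\star)$ is bounded. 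Hence $\|\widehat{\bs g}_n(\widehat{\bs\theta}_{n,V})\|\stp 0$, i.e. $\rho_{AB,\widehat{\bs\theta}_{n,V}}(\bs h^{(i)})-\widehat{\rho}_{AB,m_n}(\bs h^{(i)})\stp 0$ for each $i$. Subtracting (G1) gives $\sum_{i}(\rho_{AB,\widehat{\bs\theta}_{n,V}}(\bs h^{(i)})-\rho_{AB,\bs\theta^\star}(\bs h^{(i)}))^2\stp 0$, and the identifiability condition (G3) then forces $\widehat{\bs\theta}_{n,V}\stp\bs\theta^\star$: if $\|\widehat{\bs\theta}_{n,V}-\bs\theta^\star\|\geq\epsilon$ occurred with non‑vanishing probability, that sum would remain above the corresponding $\delta>0$, a contradiction. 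Only the continuity of $\rho_{AB,\bs\theta}$ and $V$ in $\bs\theta$ (the case $z_1=z_2=0$ of (G4) and (G5)) is used, to ensure the argmin is well defined and measurable; notably no uniform law of large numbers over $\Theta$ is required.

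\textbf{Asymptotic normality.} Under (G2), the consistency just proved, and $\bs\theta^\star$ interior, I would start from the first‑order condition $\nabla_{\bs\theta}Q_n(\widehat{\bs\theta}_{n,V})=\bs 0$. Differentiating $Q_n$ and allowing $V$ to be non‑symmetric gives
$$\nabla_{\bs\theta}Q_n(\bs\theta)=\Rho(\bs\theta)\trans[V(\bs\theta)+V(\bs\theta)\trans]\widehat{\bs g}_n(\bs\theta)+\big[\widehat{\bs g}_n(\bs\theta)\trans(\partial_{\theta_j}V(\bs\theta))\widehat{\bs g}_n(\bs\theta)\big]_{j=1,\ldots,k},$$
where the second block is quadratic in $\widehat{\bs g}_n$ and hence negligible relative to the first. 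A first‑order Taylor expansion of the $C^1$ map $\bs\theta\mapsto(\rho_{AB,\bs\theta}(\bs h^{(i)}))_i$ (here the smoothness $z_1=1$ enters) yields $\widehat{\bs g}_n(\widehat{\bs\theta}_{n,V})=\widehat{\bs g}_n(\bs\theta^\star)+\ov\Rho\,(\widehat{\bs\theta}_{n,V}-\bs\theta^\star)$ with a Jacobian $\ov\Rho$ evaluated (row‑wise) at intermediate points, so $\ov\Rho\stp\Rho(\bs\theta^\star)$ by consistency and continuity. Substituting into the first‑order condition and rearranging produces a linear system whose coefficient matrix is $\Rho(\widehat{\bs\theta}_{n,V})\trans[V+V\trans]\ov\Rho$, which by (G4),(G5) with $z_1=z_2=1$ converges in probability to $M:=\Rho(\bs\theta^\star)\trans[V(\bs\theta^\star)+V(\bs\theta^\star)\trans]\Rho(\bs\theta^\star)$. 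This $M$ is invertible because $V+V\trans$ is symmetric positive definite (from $V$ positive definite, (G5)(i)) and $\Rho(\bs\theta^\star)$ has full column rank $k$ by (G6); thus $M^{-1}=B(\bs\theta^\star)$. Solving and rescaling gives
$$\sqrt{\tfrac{n^w}{m_n^d}}(\widehat{\bs\theta}_{n,V}-\bs\theta^\star)=-B(\bs\theta^\star)\Rho(\bs\theta^\star)\trans[V(\bs\theta^\star)+V(\bs\theta^\star)\trans]\,\sqrt{\tfrac{n^w}{m_n^d}}\,\widehat{\bs g}_n(\bs\theta^\star)+o_P(1),$$
and (G2) together with Slutsky's theorem delivers \eqref{asymptGLS} with the stated sandwich covariance $\Pi_V$.

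\textbf{Main obstacle.} The technical heart is controlling the $\bs\theta$‑dependence and non‑symmetry of $V$: one must show that the extra gradient terms $\widehat{\bs g}_n\trans(\partial_{\theta_j}V)\widehat{\bs g}_n$, and the analogous second‑derivative contributions arising when the leading matrix is formed, are of smaller order than the leading linear term. I would handle this by first extracting the rate $\widehat{\bs\theta}_{n,V}-\bs\theta^\star=O_P(\sqrt{m_n^d/n^w})$ from the expansion with the invertible matrix $M$, and then observing that these terms are quadratic in $\widehat{\bs g}_n(\bs\theta^\star)=O_P(\sqrt{m_n^d/n^w})\stp\bs 0$; the uniform bounds (G5)(i) and the $C^1$ continuity afforded by (G4),(G5) keep the remainder uniformly small on a shrinking neighbourhood of $\bs\theta^\star$.
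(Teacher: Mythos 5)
Your argument for asymptotic normality is essentially the paper's own: the first-order condition $\nabla_{\bs\theta}Q_n(\widehat{\bs\theta}_{n,V})=\bs 0$ with the $\Rho\trans[V+V\trans]\widehat{\bs g}_n$ leading block and the quadratic-in-$\widehat{\bs g}_n$ correction from $\partial_{\theta_\ell}V$, a mean-value/integral-remainder Taylor expansion of $\widehat{\bs g}_n$ giving an intermediate-point Jacobian $\ov\Rho\stp\Rho_{AB}(\bs\theta^\star)$, invertibility of the sandwich matrix via (G6) and positive definiteness of $V+V\trans$, and Slutsky applied to (G2); your covariance matches $\Pi_V$ including the symmetry $B(\bs\theta^\star)\trans=B(\bs\theta^\star)$. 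For consistency you take a genuinely different, and in fact leaner, route: instead of the paper's subsequence device and the uniform convergence $\sup_{\bs\theta}|\widehat Q_n(\bs\theta)-Q(\bs\theta)|\to 0$ followed by the classical argmax-separation contradiction, you bound $Q_n(\widehat{\bs\theta}_{n,V})\le Q_n(\bs\theta^\star)\stp 0$, deduce $\|\widehat{\bs g}_n(\widehat{\bs\theta}_{n,V})\|\stp 0$ by coercivity of the quadratic form, subtract (G1), and invoke (G3) directly; this exploits the key structural fact (which you state explicitly) that the stochastic part of $\widehat{\bs g}_n$ does not depend on $\bs\theta$, so no uniform law of large numbers is needed. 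The one step to be careful about is your claim that (G5)(i) alone yields $\bs x\trans V(\bs\theta)\bs x\ge c\|\bs x\|^2$ uniformly: for a non-symmetric $V$, uniform boundedness of $\|V^{-1}\|_M$ controls the smallest singular value but not $\lambda_{\min}\bigl(\tfrac12(V+V\trans)\bigr)$, so strictly speaking you need either symmetry of $V$, compactness of $\Theta$ with continuity of $V$, or a uniform positive-definiteness assumption. The paper's proof relies on exactly the same implicit lower bound to conclude $\inf\{Q(\bs\theta):\|\bs\theta-\bs\theta^\star\|>\epsilon\}>0$ from (G3), so this is a shared imprecision rather than a gap in your argument relative to the paper.
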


\brem\label{rem_weight_matrix}
The quality of the GLSE depends on the matrix $V(\bs \theta)$.
Simple choices for the matrix $V(\bs \theta)$ in \eqref{GLSE} are the identity matrix, leading to the ordinary least squares estimator, or some general weight matrix, leading to weighted least squares estimators. 

An asymptotically optimal matrix $V(\bs\theta)$ can be obtained as follows.
Let $\Pi=\Pi(\bs \theta^\star)$ be the asymptotic covariance matrix of the empirical extremogram in Eq.~\eqref{asynthetastar}.
{Assume that $\Pi(\bs \theta^\star)$ has a closed form that depends on the true parameter vector $\bs \theta^\star$ which can be extended to a matrix function $\Pi(\bs \theta)$ on the whole parameter space $\bs\Theta$. }
{Assume also that the inverse $V(\bs\theta)=\Pi^{-1}(\bs \theta)$ exists for all $\bs\theta\in\bs\Theta$ and satisfies the Assumption~\ref{regcondgls}(G5) for $z_2=1$.}
Then, as pointed out in \citet{Lahiri2}, Theorem~4.1, for spatial variogram estimators and in \citet{EKS}, Corollary~2.3, for extreme parameter estimation based on iid random vector observations, the resulting asymptotic covariance matrix $\Pi_V=\Pi_V(\bs \theta^\star)$ of the GLSE in \eqref{asymptGLS} is asymptotically optimal among all valid matrices $V'=V'(\bs \theta)$.
This means that $\Pi_V$ is minimal in the sense that for all valid matrices $V'$, the difference $\Pi_{V'}-\Pi_V$ is positive semidefinite. 
\erem

\section{Estimation of Brown-Resnick space-time processes}\label{s3}

\subsection{Brown-Resnick processes} \label{s31}

We consider a {\textit{strictly stationary Brown-Resnick process}} with spectral representation
\beam\label{BR}
\eta(\bs{s}) = \bigvee\limits_{j=1}^\infty \left\{\xi_j \,  e^{W_j(\bs{s})-\delta(\bs{s})} \right\},\quad \bs{s}\in\R^{d},
\eeam
where 
$\{\xi_j : j\in\N\}$ are points of a Poisson process on $[0,\infty)$ with intensity $\xi^{-2}d\xi$, the {\textit{dependence function}} $\delta$ is nonnegative and conditionally negative definite, and 
$\{W_j(\bs{s}): \bs{s} \in \mathbb{R}^{d}\}$ are independent replicates of a Gaussian process $\{W(\bs s): \bs{s} \in \mathbb{R}^{d}\}$ with stationary increments,
$W(\bs{0})=0$, $\E [W(\bs{s})]=0$ and covariance function
$$\cov[W(\bs{s}^{(1)}),W(\bs{s}^{(2)})] 
=\delta(\bs{s}^{(1)})+\delta(\bs{s}^{(2)}) -\delta(\bs{s}^{(1)}-\bs{s}^{(2)}).$$
{Spectral representations of max-stable processes go back to \citet{deHaan} and \citet*{Gine}, the specific representation \eqref{BR} to \citet{Brown} in a time series context, to \citet{Schlather2} in a spatial and to \citet{Steinkohl} in a space-time setting.}
The univariate margins of the process $\eta$ follow standard unit Fr{\'e}chet distributions. Non-stationary Brown-Resnick models have recently been discussed and fitted to data in \citet{Asadi2015}, \citet{Engelke1},  {and \citet{Huser3}}.

There are various quantities to describe the dependence in \eqref{BR}, where explicit expressions can be derived:
\begin{enumerate}[$\bullet$]
\item 
In geostatistics, the {dependence function} $\delta$ is termed the \textit{semivariogram} of the process $\{W(\bs s): \bs s \in \mathbb{R}^d\}$ based on the fact that for $\bs{s}^{(1)}, \bs{s}^{(2)} \in \R^{d}$, 
$$\var[W(\bs{s}^{(1)})-W(\bs{s}^{(2)})]=2\delta(\bs{s}^{(1)}-\bs{s}^{(2)}).$$
\item 
For $\bs{h} \in \mathbb{R}^{d}$, the {tail dependence coefficient} is given by (see e.g. \citet*{Steinkohl}, Section 3)
\beam\label{tdcbrownresnick}
\rho_{(1,\infty)(1,\infty)}(\bs h)=\lim\limits_{n \rightarrow \infty} \mathbb{P} \Big(\eta(\bs h)>n \,\Big|\, \eta(\bs 0)>n\Big)
= 2\Big(1-\Phi\Big(\sqrt{\frac{\delta(\bs{h})}{2}}\Big)\Big),
\eeam
where $\Phi$ denotes the standard normal distribution function.
\item 
For $D=\{\bs{s}^{(1)},\ldots,\bs{s}^{(|D|)}\} $ and $\bs{y}=(y_1,\ldots,y_{|D|})>\bs 0$ the finite-dimensional margins are given by
\begin{align}
\mathbb{P}(\eta(\bs{s}^{(1)})\le y_1, \eta(\bs{s}^{(2)})\le y_2,\cdots, \eta(\bs{s}^{(|D|)})\le y_{|D|}) =  \exp\{-V_{D}(\bs y)\}. \label{exponentmeasureD}
\end{align}
Here $V_D$ denotes the \textit{exponent measure} (cf. \citet{Beirlant}, Section~8.2.2), which is homogeneous of order -1 and depends solely on the dependence function $\delta$.
For $D=\{\bs s,\bs s+\bs h\}$ where $\bs s \in \bbr^d$ and $\bs h \in \bbr^d$ is some fixed lag vector, we get (cf. \citet{Steinkohl}, Section~3)
\beam\label{expom}
V_2(y_1,y_2)=V_2(\bs h;y_1,y_2)=V_D(y_1,y_2) = \frac1{y_1}\wt\Phi\Big(\frac{y_2}{y_1}\Big) + \frac1{y_2}\wt\Phi\Big(\frac{y_1}{y_2}\Big),\quad y_1,y_2>0,
\eeam
with 
\beam\label{tildephi}
\wt\Phi\Big(\frac{x}{y}\Big)=\wt\Phi\Big(\bs h;\frac{x}{y}\Big):=\Phi\Big(\frac{\log(x/y)}{\sqrt{2 \delta(\bs h)}}+\sqrt{\frac{\delta(\bs h)}{2}}\Big),\quad x,y>0.
\eeam
\item
For $\bs h \in \mathbb{R}^{d}$ and sets $A=(\underline{A},\overline{A})$ and $B=(\underline{B},\overline{B})$ with $0<\underline{A}<\overline{A}\leq\infty$ and $0<\underline{B}<\overline{B}\leq\infty$, the extremogram~\eqref{extremo} is given by (see \cite{buhl3}, Eq.~(A.1))
\begin{align}
\rho_{AB}(\bs h)=&\frac{\underline{A}\overline{A}}{\overline{A}-\underline{A}}\Big(-V_2(\overline{A},\overline{B})+V_2(\overline{A},\underline{B})+V_2(\underline{A},\overline{B})-V_2(\underline{A},\underline{B})\Big) \label{extremBR}
\end{align}
for $V_2$ as in \eqref{expom}.
For $A=(\underline{A},\infty)$ and $B=(\underline{B},\infty)$ we get formula (31) of \citet{cho}:
\beam\label{cho}
\rho_{AB}(\bs h) = \underline{A}\Big\{\underline{A}^{-1}\Big(1-\wt \Phi\Big(\underline{B}/ \underline{A}\Big)\Big)+ \underline{B}^{-1}\Big(1-\wt \Phi\Big(\underline{A}/ \underline{B}\Big)\Big)\Big\}.
\eeam
\item 
The {\textit{extremal coefficient}} $\xi_{D}$ (see \cite{Beirlant}, Section~8.2.7) for any finite set $D\subset\R^{d}$ is defined as
$$\mathbb{P}(\eta(\bs{s}^{(1)})\le y, \eta(\bs{s}^{(2)})\le y,\cdots, \eta(\bs{s}^{(|D|)})\le y) =  \exp\{-\xi_{D}/y\},\quad y>0;$$
i.e., $\xi_{D}=V_{D}(1,\ldots,1).$ 
If $|D|=2$ and $\bs h = \bs{s}^{(1)}-\bs{s}^{(2)}$, then 
\begin{align}
\xi_{D} = 2-\rho_{(1,\infty)(1,\infty)}(\bs h) = 2\Phi\Big(\sqrt{\frac{\delta(\bs{h})}{2}}\Big), \label{reltdec}
\end{align}
where the first identity holds in general (cf. \citet{Beirlant}, Section 9.5.1), and the last one by \eqref{tdcbrownresnick}.
\end{enumerate}

Our aim is to fit a parametric extremogram model 
of a Brown-Resnick process \eqref{BR} based on observations given in $\cald_n=\calf\times\cali_n$ as in \eqref{observed}.
This approach is semiparametric in the sense that we first compute (possibly bias corrected) empirical estimates \eqref{biascorrectedextremo} of the extremogram ${\rho}_{AB}(\bs{h})$ for different $\bs h \in \mathcal{H}$, and fit a parametric model $\rho_{AB,\bs \theta}(\bs h)$ by GLSE 
to the empirical extremogram. 
For sets $A=B=(\underline{A},\infty)$ with $\underline{A}>0$, this yields an estimator of the dependence function, since by \eqref{tildephi} and  \eqref{cho} there is a one-to-one relation between extremogram and dependence function.

\subsection{Asymptotic properties of the empirical extremogram of a Brown-Resnick process}\label{s33}

Let $\{\eta(\bs s): \bs s \in \mathbb{R}^d\}$ be a strictly stationary Brown-Resnick process as in \eqref{BR} with some valid (i.e., nonnegative and conditionally negative definite) dependence function $\delta$.
Before investigating the asymptotic properties of the GLSE, we state sufficient conditions for $\delta$ so that the regularity conditions of Theorem~\ref{stasyn} are satisfied. 

\begin{theorem} \label{Prop_regcond2}
Let $\{\eta(\bs s): \bs s \in \mathbb{R}^d\}$ be a strictly stationary Brown-Resnick process as in \eqref{BR}, observed on $\mathcal{D}_n=\mathcal{F} \times \mathcal{I}_n$  as in \eqref{observed}. 
Let $\calh=\{\bs h^{(1)},\ldots,\bs h^{(p)}\} \subset \mathcal{B}(\bs 0,\ga)$ for some $\ga>0$ be a set of observed lag vectors.
Assume sequences
\beam\label{BRsequences}
m_n,r_n \rightarrow \infty, \quad m_n^d/n^w \to 0,\quad r_n^w/m_n^d \to 0, \quad m_n^{2d}r_n^{2w}/n^w \to 0,\quad n\to\infty.
\eeam
Writing $\bs v=(\bs v_{\mathcal{F}},\bs v_{\cali}) \in \bbr^q \times \bbr^w$ according to the fixed and increasing domains, assume that the dependence function $\delta$ satisfies for  arbitrary fixed finite set $L \subset \mathbb{Z}^q:$ 
\begin{enumerate}[(A)]
\item 
$m_n^d \sum\limits_{z > r_n} z^{w-1} \exp\Big\{-\frac{1}{4} \inf\limits_{\bs v \in L \times \mathbb{Z}^w: \|\bs v_{\mathcal{I}}\|\geq z} \delta(\bs v)\Big\} \rightarrow 0$ as $n \rightarrow \infty$.
\item 
$m_n^{d/2} n^{(3w)/2} \exp\Big\{-\frac{1}{4} \inf\limits_{\bs v \in L \times \mathbb{Z}^w: \|\bs v_{\mathcal{I}}\| >r_n}\delta(\bs v)\Big\} \rightarrow 0$ as $n \rightarrow \infty.$
\end{enumerate}
Then conditions (M1)-(M4) of Theorem~\ref{stasyn} are satisfied, and the empirical extremogram $\wh{\rho}_{AB,m_n}$ defined in \eqref{EmpEst} sampled at lags in $\calh$ 
and centred by the pre-asymptotic extremogram $\rho_{AB,m_n}$ given in \eqref{preasymptotic}, is asymptotically normal; i.e.,
\begin{equation}\label{asyspace_BR1}
\sqrt{\frac{n^w}{m_n^d}}\Big[\widehat{\rho}_{AB,m_n}(\bs h^{(i)}) -\rho_{AB,m_n}(\bs h^{(i)})\Big]_{i=1,\ldots,p} \std \mathcal{N}(\bs 0,\Pi),\quad \nto,
\end{equation}
where the covariance matrix $\Pi$ is specified in Theorem~\ref{stasyn}.
\end{theorem}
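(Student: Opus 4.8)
The plan is to reduce everything to Theorem~\ref{stasyn}: once the abstract mixing conditions (M1)--(M4) are verified for the Brown-Resnick process $\eta$, the CLT \eqref{asyspace_BR1} follows at once, with the very same covariance matrix $\Pi$. Hence the whole task is to translate the variogram conditions (A) and (B) into (M1)--(M4). The bridge between the two is a single analytic estimate on the extremal dependence: combining the tail dependence formula \eqref{tdcbrownresnick} with the Gaussian tail bound $1-\Phi(x)\le\tfrac12 e^{-x^2/2}$ gives
\[
\rho_{(1,\infty)(1,\infty)}(\bs h)=2\Big(1-\Phi\big(\sqrt{\delta(\bs h)/2}\big)\Big)\le \exp\{-\tfrac14\delta(\bs h)\},
\]
which is exactly the source of the factor $\exp\{-\tfrac14\inf\delta\}$ appearing in (A) and (B).

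First I would establish the governing bound on the $\alpha$-mixing coefficients. For the max-stable process $\eta$ the dependence between its values on two index sets separated in the increasing directions by at least $z$ is controlled by the bivariate extremal dependence of the worst pair, so one obtains an estimate of the form $\alpha_{k_1,k_2}(z)\le C(k_1,k_2)\,\exp\{-\tfrac14\inf_{\|\bs v_{\mathcal{I}}\|\ge z}\delta(\bs v)\}$, where the infimum runs over lags whose fixed component lies in the relevant finite set $L\subset\Z^q$ and whose increasing component has norm at least $z$. This is the technical heart of the argument and the step I expect to be the main obstacle, since $\alpha$-mixing coefficients are defined through a supremum over all events in the two generated $\sigma$-algebras and thus require control of the full joint law rather than merely pairwise quantities; the proof leans on the max-stable structure of $\eta$ (via its exponent measure and the representation \eqref{BR}) to dominate the mixing coefficient by a sum of pairwise tail dependence coefficients, each bounded by the displayed inequality above.

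With this bound in hand the remaining verifications become bookkeeping. Condition (M1) is merely the existence of finite, decaying mixing coefficients, which the bound supplies, and (M2) is identical to the last requirement in \eqref{BRsequences} and needs nothing further. For (M3) I would bound the joint exceedance probability over the two balls $\mathcal{B}(\bs 0,\ga)$ and $\mathcal{B}(\bs\ell,\ga)$ by a finite sum of pairwise probabilities $\mathbb{P}(X(\bs s)>\eps a_m,X(\bs s')>\eps a_m)$; with $a_m=m_n^d$ and unit Fr\'echet margins each such term equals $m_n^d$ times a pairwise tail of order $\eps^{-1}\exp\{-\tfrac14\delta\}$, so that the factor $m_n^d$ cancels and the iterated limit $\lim_k\limsup_n$ vanishes by the tail summability encoded in (A). Condition (M4i) follows directly from (A) after rewriting the lattice sum $\sum_{\|\bs\ell\|>r_n}$ in radial shells, which produces the factor $z^{w-1}$; condition (M4iii) follows from (B), because $\alpha_{1,n^w}(r_n)$ carries at most $n^w$ points in its second argument and is therefore bounded by $n^w\exp\{-\tfrac14\inf_{\|\bs v_{\mathcal{I}}\|>r_n}\delta\}$, whence $m_n^{d/2}n^{w/2}\alpha_{1,n^w}(r_n)\le m_n^{d/2}n^{3w/2}\exp\{-\tfrac14\inf\delta\}\to 0$.

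Finally, condition (M4ii), the summability $\sum_{\bs\ell\in\Z^w}\alpha_{k_1,k_2}(\|\bs\ell\|)<\infty$ for $2\le k_1+k_2\le 4$, follows from the exponential bound together with the growth of $\delta$ forced by (A): the convergence of $m_n^d\sum_{z>r_n}z^{w-1}\exp\{-\tfrac14\inf\delta\}$ along sequences $m_n\to\infty$ requires $\inf_{\|\bs v_{\mathcal{I}}\|\ge z}\delta(\bs v)$ to grow faster than $w\log z$, which is precisely what makes the radial series $\sum_z z^{w-1}\exp\{-\tfrac14\inf\delta\}$ converge. Having verified (M1)--(M4), the conclusion \eqref{asyspace_BR1} is the direct application of Theorem~\ref{stasyn}.
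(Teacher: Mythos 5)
Your proposal is correct and follows essentially the same route as the paper: the Gaussian tail bound $1-\Phi(x)\le e^{-x^2/2}$ applied to the tail dependence coefficient \eqref{tdcbrownresnick}, the domination of the $\alpha$-mixing coefficients by sums of pairwise tail dependence coefficients via the max-stable structure (the paper invokes Dombry and Eyi-Minko for exactly this step, which you rightly single out as the technical heart), the reduction of (M3) to pairwise exceedance probabilities with a Taylor expansion in $1/m_n^d$, and the radial-shell counting that turns lattice sums into the $z^{w-1}$ series of conditions (A) and (B). The only detail left implicit is that the $O(m_n^{-2d})$ Taylor remainders in (M3), summed over $O(r_n^w)$ lattice points, are absorbed by the assumption $r_n^w/m_n^d\to 0$ from \eqref{BRsequences}, but this is routine bookkeeping.
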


\begin{proof}
First note that, since all finite-dimensional distributions are max-stable distributions with standard unit Fr\'echet margins, they are multivariate regularly varying. We first show (M3). Let $\epsilon > 0$
and fix $\bs \ell_{\mathcal{F}} \in \mathbb{R}^q$. 
For $\ga>0$ define the set 
$$L_{\ga}(\bs \ell_{\mathcal{F}},\bs \ell_{\mathcal{I}}):=\{\bs s_1-\bs s_2: \bs s_1 \in \mathcal{B}(\bs 0,\ga), \bs s_2 \in \mathcal{B}((\bs \ell_{\mathcal{F}},\bs \ell_{\mathcal{I}}),\ga)\}.$$
Note that, writing $\bs s_1=(\bs f_1,\bs i_1)$ and $\bs s_2=(\bs f_2,\bs i_2) \in \bbr^q \times \bbr^w$ according to the fixed and increasing domains as before, it can be decomposed into $L_{\ga}(\bs \ell_{\mathcal{F}},\bs \ell_{\mathcal{I}})=L_{\ga}^{(1)} \times L_{\ga}^{(2)}(\bs \ell_{\cali})$ where $L_{\ga}^{(1)}:=\{\bs f_1-\bs f_2: \bs s_1 \in \mathcal{B}((\bs 0,\bs 0),\ga), \bs s_2 \in \mathcal{B}((\bs \ell_{\mathcal{F}},\bs 0),\ga)\}$, which is independent of $\bs \ell_{\cali}$, and $L_{\ga}^{(2)}(\bs \ell_{\cali}):=\{\bs i_1-\bs i_2: \bs s_1 \in \mathcal{B}((\bs 0,\bs 0),\ga), \bs s_2 \in \mathcal{B}((\bs \ell_{\mathcal{F}},\bs \ell_{\cali}),\ga)\}$. 
Then, recalling that $a_m=m_n^{d}$, and using a second order Taylor expansion as in the proof of {Theorem~4.3} of \citet{Steinkohl3}, we have as $n \rightarrow \infty$,
\begin{align*}
&\mathbb{P}(\max\limits_{\bs s \in \mathcal{B}(\bs 0,\gamma)} \eta(\bs s) > \epsilon a_m,\max\limits_{\bs s' \in \mathcal{B}((\bs \ell_{\mathcal{F}},\bs \ell_{\mathcal{I}}),\gamma)} \eta(\bs s') > \epsilon a_m)\\
\leq & \sum\limits_{\bs s \in \mathcal{B}(\bs 0,\ga)}\sum\limits_{\bs s' \in \mathcal{B}((\bs \ell_{\mathcal{F}},\bs \ell_{\mathcal{I}}),\ga)}\mathbb{P}(\eta(\bs s) > \epsilon m_n^{d}, \eta(\bs s') > \epsilon m_n^{d}) \\
=& \sum\limits_{\bs s \in \mathcal{B}(\bs 0,\ga)}\sum\limits_{\bs s' \in \mathcal{B}((\bs \ell_{\mathcal{F}},\bs \ell_{\mathcal{I}}),\ga)} \Big( 1-2 \exp\Big\{-\frac{1}{\epsilon m_n^{d}}\Big\}+\exp\Big\{-\frac{2}{\epsilon m_n^{d}} \Phi\Big(\sqrt{\frac{\delta(\bs s-\bs s')}{2}}\Big)\Big\}\Big)\\
\le &\frac{2|\mathcal{B}(\bs 0,\gamma)|^2}{\epsilon m_n^{d}} \Big(1-\Phi\Big(\Big(\frac12{\inf_{\bs v \in L_{\ga}(\bs \ell_{\mathcal{F}},\bs \ell_{\mathcal{I}})} \delta(\bs v)}\Big)^{1/2}\Big)\Big) + \mathcal{O}\Big(\frac{1}{m_n^{2d}}\Big).
\end{align*}
Therefore, 
\begin{align*}
&\limsup\limits_{n \rightarrow \infty}  \sum\limits_{\bs \ell_{\mathcal{I}} \in \mathbb{Z}^w \atop k< \|\bs \ell_{\mathcal{I}}\| \leq r_n} m_n^{d} \mathbb{P}(\max\limits_{\bs s \in \mathcal{B}(\bs 0,\ga)} \eta(\bs s)>\epsilon a_m, \max\limits_{\bs s' \in \mathcal{B}((\bs \ell_{\mathcal{F}},\bs \ell_{\mathcal{I}}),\ga)} \eta(\bs s')>\epsilon a_m) \\
\leq & 2 |\mathcal{B}(\bs 0,\ga)|^2 \limsup\limits_{n \rightarrow \infty} \sum\limits_{\bs \ell_{\mathcal{I}} \in \mathbb{Z}^w \atop k< \|\bs \ell_{\mathcal{I}}\| \leq r_n} 
\Big\{\frac{1}{\epsilon} \Big(1-\Phi\Big(\Big(\frac12 \inf_{\bs v \in L_{\ga}(\bs \ell_{\mathcal{F}},\bs \ell_{\mathcal{I}})} \delta(\bs v)\Big)^{1/2}\Big)\Big)+\mathcal{O}\Big(\frac{1}{m_n^{d}}\Big)\Big\}.
\end{align*}
Since the number of grid points $\bs \ell_{\cali}$ in $\mathbb{Z}^w$ with norm $\|\bs \ell_{\mathcal{I}}\|=z$ is of order $\mathcal{O}(z^{w-1})$, there exists a positive constant $C$ such that the right hand side can be bounded from above by
\begin{align*}
& 2 C |\mathcal{B}(\bs 0,\ga)|^2 \limsup\limits_{n \rightarrow \infty} \sum\limits_{\atop k< z \leq r_n} \Big\{ \frac{z^{w-1}}{\epsilon} \Big(1-\Phi\Big(\Big(\frac{1}{2}\inf_{\bs v \in L_{\ga}(\bs \ell_{\mathcal{F}},\bs \ell_{\mathcal{I}}): \bs \ell_{\mathcal{I}} \in \mathbb{Z}^w, \|\bs \ell_{\mathcal{I}}\|=z} \delta(\bs v)\Big)^{1/2}\Big)\Big)\\&\quad+\mathcal{O}\Big(\frac{z^{w-1}}{m_n^{d}}\Big)\Big\} \\
&\leq\frac{2C |\mathcal{B}(\bs 0,\ga)|^2}{\epsilon} \limsup\limits_{n \rightarrow \infty} \sum\limits_{\atop k< z < \infty} \Big\{z^{w-1} \Big(\exp\Big\{-\frac{1}{4}\inf_{\bs v \in L_{\ga}(\bs \ell_{\mathcal{F}},\bs \ell_{\mathcal{I}}): \bs \ell_{\mathcal{I}} \in \mathbb{Z}^w, \|\bs \ell_{\mathcal{I}}\|= z} \delta(\bs v)\Big\}\Big)\Big\}\\&\quad+\mathcal{O}\Big(\frac{r_n^w}{m_n^{d}}\Big)\\
&\leq\frac{2C |\mathcal{B}(\bs 0,\ga)|^2}{\epsilon} \limsup\limits_{n \rightarrow \infty} \sum\limits_{\atop k< z < \infty} \Big\{z^{w-1} \Big(\exp\Big\{-\frac{1}{4}\inf_{\bs v \in L_{\ga}^{(1)} \times \bbz^w: \|\bs v_{\mathcal{I}}\| \geq z - \ga} \delta(\bs v)\Big\}\Big)\Big\}\\&\quad+\mathcal{O}\Big(\frac{r_n^w}{m_n^{d}}\Big),
\end{align*}
where we have used in the second last step that $1-\Phi(x) \leq \exp\{-x^2/2\}$ for $x > 0$ and in the last step the decomposition $L_{\ga}(\bs \ell_{\mathcal{F}},\bs \ell_{\mathcal{I}})=L_{\ga}^{(1)} \times L_{\ga}^{(2)}(\bs \ell_{\cali})$. 
By condition (A), since we can neglect the constant $\ga$, we have 
$$\lim\limits_{k \rightarrow \infty} \sum\limits_{\atop k< z < \infty} z^{w-1}\exp\Big\{-\frac{1}{4}\inf_{\bs v \in L_{\ga}^{(1)} \times \bbz^w: \|\bs v_{\mathcal{I}}\| \geq z - \ga} \delta(\bs v)\Big\}=0.$$
Together with $r_n^{w}=o(m_n^d)$ as $n \rightarrow \infty$, this implies that 
$$\lim\limits_{k \rightarrow \infty} \limsup\limits_{n \rightarrow \infty} \sum_{k< z \leq r_n} \Big\{z^{w-1} \Big(\exp\Big\{-\frac{1}{4}\inf_{\bs v \in L_{\ga}^{(1)} \times \bbz^w: \|\bs v_{\mathcal{I}}\| \geq z - \ga} \delta(\bs v)\Big\}\Big)\Big\}+\mathcal{O}\Big(\frac{r_n^{w}}{m_n^d}\Big)= 0.$$

Next we prove (M1) and (M4i)-(M4iii). {To this end we bound the $\alpha$-mixing coefficients $\alpha_{k_1,k_2}(\cdot)$ for $k_1,k_2 \in \bbn$ of  $\{\eta(\bs s): \bs s \in \mathbb{R}^d\}$ with respect to $\R^w$, which are  defined in \eqref{alpha_balls}.} Observe that $d(\Lambda_1,\Lambda_2)$ for sets $\Lambda_i \subset \mathbb{Z}^w$ as in Definition~\ref{mixing} can only get  large within the increasing domain. 
Define the set 
$$L_{\mathcal{F}}:=\{\bs s_1-\bs s_2: \bs s_1,\bs s_2 \in \mathcal{F}\}.$$
We use Eq.~\eqref{reltdec}, as well as \citet{Dombry}, Eq.~(3) and Corollary~2.2 to obtain 
\begin{align}
\alpha_{k_1,k_2}(z) &\leq 2 \sup\limits_{d(\Lambda_1,\Lambda_2) \geq z} \sum\limits_{\bs s_1 \in \mathcal{F} \times \Lambda_1} \sum\limits_{\bs s_2 \in \mathcal{F} \times \Lambda_2} \rho_{(1,\infty)(1,\infty)}(\bs s_1-\bs s_2) \notag \\
&\leq 2k_1k_2 |\mathcal{F}|^2 \sup\limits_{\bs v  \in L_{\mathcal{F}} \times \mathbb{Z}^w: \|\bs v_{\mathcal{I}}\| \geq z} \rho_{(1,\infty)(1,\infty)}(\bs v) \notag \\
& = 4 k_1k_2 |\mathcal{F}|^2 \Big(1-\Phi \Big(\Big(\frac12\,{\inf\limits_{\bs v  \in L_{\mathcal{F}} \times \mathbb{Z}^w: \|\bs v_{\mathcal{I}}\| \geq z}\delta(\bs v)}\Big)^{\frac{1}{2}}\Big)\Big) \notag\\
& \leq 4k_1k_2 |\mathcal{F}|^2 \exp\Big\{-\frac{1}{4}\inf\limits_{\bs v \in L_{\mathcal{F}} \times \mathbb{Z}^w: \|\bs v_{\mathcal{I}}\| \geq z}\delta(\bs v)\Big\}. \label{alphabound}
\end{align}
By condition (A) we have $\alpha_{k_1,k_2}(z) \rightarrow 0$, since necessarily $\inf\limits_{\bs v  \in L_{\mathcal{F}} \times \mathbb{Z}^w: \|\bs v_{\mathcal{I}}\| \geq z}\delta(\bs v) \rightarrow \infty$ as $z \rightarrow \infty$ and, therefore, the process $\{\eta(\bs s): \bs s \in \mathbb{R}^d\}$ is $\alpha$-mixing; i.e., (M1) holds.
We continue by estimating
\begin{align*}
& m_n^d \sum\limits_{\bs \ell \in \mathbb{Z}^{w}: \|\bs \ell\| > r_n} \alpha_{1,1}(\|\bs \ell\|) 
 \, \leq \,  C  m_n^d  \sum\limits_{z > r_n} z^{w-1} \alpha_{1,1}(z) \\
 \leq \, & 4C |\mathcal{F}|^2 m_n^d \sum\limits_{z > r_n} z^{w-1} \exp\Big\{-\frac{1}{4}\inf\limits_{\bs v  \in L_{\mathcal{F}} \times \mathbb{Z}^w: \|\bs v_{\mathcal{I}}\| \geq z}\delta(\bs v)\Big\} \to 0,\quad n\to\infty, 
\end{align*} 
by condition (A).
This shows (M4i). 
Similarly, it can be shown that (M4ii) holds, if (A) is satisfied. 
Finally, we show (M4iii). Using Eq.~\eqref{alphabound}, we find
\begin{align*}
&m_n^{d/2} n^{w/2} \alpha_{1, n^w}(r_n)\leq 4  m_n^{d/2} n^{(3w)/2} |\mathcal{F}|^2 \exp\Big\{-\frac{1}{4}\inf\limits_{\bs v  \in L_{\mathcal{F}} \times \mathbb{Z}^w: \|\bs v_{\mathcal{I}}\| \geq r_n}\delta(\bs v)\Big\} 
\rightarrow 0
\end{align*}
as $n \rightarrow \infty$ because of condition (B).
\end{proof}

The following is an immediate corollary of Theorem~\ref{Prop_regcond2}.

\begin{corollary}\label{sufficient_BR}
Assume the setting of Theorem~\ref{Prop_regcond2}.
 Suppose that the dependence function $\delta$ satisfies for positive constants $C$ and $\alpha$, and for an arbitrary norm $\|\cdot\|$ on $\bbr^w$,  
 \begin{align}\delta(\bs v)
\geq C 
\|\bs v_{\cali}\|^{\alpha} \label{suff_cond_BR}
\end{align} for every $\bs v=(\bs v_{\calf},\bs v_{\cali}) \in L \times \bbz^w$, where $L \subset \bbz^q$ is arbitrary, but fixed. In particular, $\delta(\bs v)\to \infty$ if 
$\|\bs v_{\cali}\| \to \infty$. With $m_n=n^{\beta_1}$ and $r_n=n^{\beta_2}$ with $\beta_1 \in (0,w/(2d))$ and $\beta_2 \in \min\{\beta_1d/w;1/2-\beta_1d/w\}$, the conditions of Theorem~\ref{Prop_regcond2} are satisfied for $\{\eta(\bs s): \bs s \in \bbr^d\}$ and we conclude
\begin{equation}
n^{(w-d\beta_1)/2}\Big[\widehat{\rho}_{AB,m_n}(\bs h^{(i)}) -\rho_{AB,m_n}(\bs h^{(i)})\Big]_{i=1,\ldots,p} \std \mathcal{N}(\bs 0,\Pi),\quad \nto.\label{asyspace_BR}
\end{equation}
\end{corollary}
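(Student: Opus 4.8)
The plan is to observe that the Corollary is purely a matter of checking that the polynomial choices $m_n=n^{\be_1}$ and $r_n=n^{\be_2}$, together with the power lower bound \eqref{suff_cond_BR}, imply all hypotheses of Theorem~\ref{Prop_regcond2}; the CLT \eqref{asyspace_BR} then follows verbatim, with rate $\sqrt{n^w/m_n^d}=n^{(w-d\be_1)/2}$. Concretely, I would verify in turn the sequence conditions \eqref{BRsequences} and the two decay conditions (A) and (B) on $\de$.

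For the sequence conditions everything reduces to exponent bookkeeping. With $m_n^d=n^{\be_1 d}$ and $r_n^w=n^{\be_2 w}$ one has $m_n^d/n^w=n^{\be_1 d-w}\to 0$ since $\be_1<w/(2d)<w/d$; next $r_n^w/m_n^d=n^{\be_2 w-\be_1 d}\to 0$ since $\be_2<\be_1 d/w$; and $m_n^{2d}r_n^{2w}/n^w=n^{2\be_1 d+2\be_2 w-w}\to 0$ since $\be_2<\tfrac12-\be_1 d/w$. Finally $\be_1,\be_2>0$ give $m_n,r_n\to\infty$. Thus \eqref{BRsequences} holds on the stated parameter ranges.

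The crux is the interplay between polynomial prefactors and the stretched-exponential decay produced by \eqref{suff_cond_BR}. Substituting $\de(\bs v)\ge C\|\bs v_{\cali}\|^{\al}$ into the infimum in condition (A) gives $\inf_{\bs v\in L\times\bbz^w:\|\bs v_{\cali}\|\ge z}\de(\bs v)\ge C z^{\al}$, so the sum in (A) is bounded by $m_n^d\sum_{z>r_n}z^{w-1}\exp\{-\tfrac{C}{4}z^{\al}\}$. Comparing the tail sum with the integral $\int_{r_n}^{\infty}x^{w-1}e^{-Cx^{\al}/4}\,dx$ shows it is of order $r_n^{w-\al}\exp\{-\tfrac{C}{4}r_n^{\al}\}$, so the whole expression is bounded by a fixed power of $n$ times $\exp\{-\tfrac{C}{4}n^{\be_2\al}\}$. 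Since $\be_2\al>0$, this stretched-exponential factor decays faster than any power of $n$, and the product tends to $0$; this gives (A). Condition (B) is handled identically: the infimum over $\|\bs v_{\cali}\|>r_n$ is again at least $C r_n^{\al}$, so the left-hand side is at most $n^{\be_1 d/2+3w/2}\exp\{-\tfrac{C}{4}n^{\be_2\al}\}\to 0$.

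The only genuinely technical point, and it is a mild one, is the tail estimate for the sum in (A): one must confirm that the stretched-exponential decay in $z$ dominates both the polynomial weight $z^{w-1}$ and, crucially, the growing prefactor $m_n^d$. Once this is in place, conditions (M1)--(M4) of Theorem~\ref{stasyn} hold by Theorem~\ref{Prop_regcond2}, and \eqref{asyspace_BR} follows immediately upon identifying $\sqrt{n^w/m_n^d}=n^{(w-d\be_1)/2}$.
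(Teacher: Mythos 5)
Your proposal is correct and follows essentially the same route as the paper: verify the sequence conditions \eqref{BRsequences} by exponent bookkeeping, plug the lower bound \eqref{suff_cond_BR} into the infima in (A) and (B), and observe that the resulting stretched-exponential factor $\exp\{-Cn^{\beta_2\alpha}/4\}$ kills every polynomial prefactor. The only cosmetic difference is that you bound the tail sum in (A) by an integral comparison (giving $r_n^{w-\alpha}e^{-Cr_n^{\alpha}/4}$) whereas the paper uses eventual monotonicity of $z^{w-1}e^{-Cz^{\alpha}/4}$ to get $Kr_n^{w}e^{-Cr_n^{\alpha}/4}$; both rest on the same monotonicity fact and yield the same conclusion.
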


\begin{proof}
 Due to equivalence of norms on $\bbr^w$ we will make no difference between the norm in \eqref{suff_cond_BR} and the one used in Theorem~\ref{Prop_regcond2}. 
 Clearly the sequences $m_n$ and $r_n$ satisfy the requirements 
$m_n,r_n \rightarrow \infty$, $m_n^d/n^w \to 0$, $r_n^w/m_n^d \to 0$ and $m_n^{2d}r_n^{2w}/n^w \to 0$ as $n\to\infty.$ 
We have for $z>0$,
\begin{align*}
\exp\Big\{-\frac{1}{4} \inf\limits_{\bs v \in L \times \mathbb{Z}^w: \|\bs v_{\mathcal{I}}\| >z}\delta(\bs v)\Big\} 
& \leq \exp\Big\{-\frac{1}{4} \inf\limits_{\bs v \in L \times \mathbb{Z}^w: \|\bs v_{\mathcal{I}}\| >z} C \|\bs v_{\cali}\|^{\alpha} \Big\} \\
& \leq \exp\Big\{-\frac{Cz^{\alpha}}{4}  \Big\}.
\end{align*}
Condition (B) of Theorem~\ref{Prop_regcond2} is satisfied since 
\begin{align*}
n^{(\beta_1d)/2} n^{(3w)/2} \exp\Big\{-\frac{Cr_n^\alpha}{4}  \Big\} &=n^{(\beta_1d)/2} n^{(3w)/2} \exp\Big\{-\frac{Cn^{\beta_2\alpha}}{4} \Big\}\\
&=\exp\Big\{-\frac{Cn^{\beta_2\alpha}}{4} +\frac{\beta_1d+3w}{2}\log(n)\Big\}\to 0,\,\, \nto.
\end{align*}
Condition (A) holds since by Lemma~A.3 of \citet{Steinkohl3}, there is a positive constant $K$ such that for sufficiently large $n$  the sequence $z^{w-1} \exp\{-Cz^{\alpha}/4\}$ is  decreasing for $z \geq r_n$,
\begin{align*}
m_n^d \sum\limits_{z > r_n} z^{w-1} \exp\Big\{-\frac{Cz^{\alpha}}{4} \Big\} &\leq K m_n^d r_n^w\exp\Big\{-\frac{Cr_n^{\alpha}}{4} \Big\} \\
&=K \exp\Big\{-\frac{Cn^{\beta_2\alpha}}{4}  + (\beta_1d+\beta_2w) \log(n) \Big\}  \rightarrow 0,\,\, \nto.
\end{align*}
\end{proof}

With the particular choice of sequences $m_n=n^{\beta_1}$ and $r_n=n^{\beta_2}$ given in Corollary~\ref{sufficient_BR}, we are in the setting of Remark~\ref{summary_cases}. Hence, in addition to the CLT~\eqref{asyspace_BR}, we obtain the CLT~\eqref{ohnebias} of the empirical extremogram centred by the true one and the CLT~\eqref{asynbias} corresponding to the bias corrected estimator.

{\brem 
\begin{enumerate}[(i)]
\item Corollary~\ref{sufficient_BR} requires the dependence function $\delta$ of the Brown-Resnick process to be unbounded. This requirement is not satisfied, for example, by the Schlather model or extremal-$t$-models, which do not capture possible extremal independence between two process values; see for example \citet{DavisonPadoanRibatet}, Section~6.1 and \citet{Opitz}, Section~4.
\item Other prominent max-stable processes that satisfy the conditions of Theorem~\ref{stasyn} are the max-moving average processes (see Example~4.6 of \citet{buhl3}) or special cases of the random set model in \citet{Huser}. 
\end{enumerate}
\erem}

\subsection{Space-time Brown-Resnick processes: different models for the extremogram} \label{s32}

We explore the semiparametric estimation for strictly stationary  Brown-Resnick processes  in their space-time form $\{\eta(\bs s,t): \bs s \in \mathbb{R}^{d-1}, t \in [0,\infty)\}$. 
For three classes of parametric models for the dependence function $\delta_{\bs \theta}$ we prove that the GLSE is consistent and asymptotically normal.   

Note that by Eq.~\eqref{cho} every model $\{\delta_{\bs \theta}: \bs \theta \in \Theta\}$ for the dependence function 
 yields a model $\{\rho_{AB,\bs \theta}: \bs \theta \in \Theta\}$ for its space-time extremogram. 
Moreover, the extremogram \eqref{cho} is always of the same form, and only $\wt\Phi$ in \eqref{tildephi} changes with the model. 
We consider  three different model classes, which together cover a large field of environmental applications such as the modelling of extreme precipitation (cf. \cite{Steinkohl}, \cite{buhl1}, \cite{fondeville}, \cite{Steinkohl3}), extreme wind speed (cf. \cite{Engelke1}) or extremes on river networks (cf. \cite{Asadi2015}), provided they are valid (i.e., nonnegative and conditionally negative definite) dependence functions in the considered metric. \\[2mm]
\textbf{(I) \, Fractional space-time model.} \\
 \citet{Steinkohl} introduce the spatially isotropic model
\beam\label{varioStein}
\delta_{\bs \theta}(\bs h, u)=C_1 \Vert{\bs h}\Vert^{\alpha_1}+C_2 |u|^{\alpha_{2}}, \quad (\bs h,u) \in \mathbb{R}^{d},
\eeam 
with parameter vector 
$$\bs \theta\in \left\{(C_1,C_2,\alpha_1,\alpha_2): C_1,C_2 \in (0, \infty), \alpha_1,\alpha_2 \in (0,2]\right\}.$$ 
The isotropy assumption, where \eqref{varioStein} depends on the norm of the spatial lag $\bs h$, can be relaxed in a natural way by introducing \textit{geometric anisotropy}.
We only discuss the case $d-1=2$, but the approach is easily transferable to higher dimensions. 
Let $\varphi \in [0,\pi/2)$ be a rotation angle and $R=R(\varphi)$ a rotation matrix, and $T$ a dilution matrix with $c>0$; more precisely,
$$R=\begin{pmatrix}
\cos \varphi & -\sin \varphi \\ \sin \varphi & \cos \varphi
\end{pmatrix}
\quad\mbox{and}\quad
T=\begin{pmatrix}
1 & 0 \\ 0 & c 
\end{pmatrix}
.
$$ 
The geometrically anisotropic model is then given by 
\begin{align}
\wt\delta_{\bs{\wt\theta}}(\bs h,u)=\delta_{\bs \theta}(A\bs h,u), \quad (\bs h,u) \in \mathbb{R}^{d},\label{varioStein_aniso}
\end{align}
where  $A=TR$ is the transformation matrix.
The parameter vector of the transformed model is 
$$\bs{\wt\theta}\in \left\{(C_1,C_2,\alpha_1,\alpha_2,c,\varphi): C_1,C_2 \in (0, \infty), \alpha_1,\alpha_2 \in (0,2],c>0, \varphi \in [0,\pi/2)\right\}.$$ 
For more details about geometric anisotropy see \citet{Blanchet1}, Section~4.2, \cite{Steinkohl}, Section~4.2, or \citet{Engelke1}, Section~5.2.
 \\[2mm]
 \textbf{(II) \, Spatial anisotropy along orthogonal spatial directions} \\
\citet{buhl1} generalize the fractional isotropic model \eqref{varioStein} to
\beam\label{vario0}
\delta_{\bs \theta}(\bs h,u) = \sum_{j=1}^{d-1} C_j |h_j|^{\alpha_j}+C_{d} |u|^{\alpha_{d}}, \quad (\bs h,u) \in \mathbb{R}^{d}
\eeam 
with parameter vector 
 $${\bs \theta}\in \left\{(C_j,\alpha_j, j=1,\ldots,d): C_j \in (0, \infty), \alpha_j \in (0,2], j=1, \ldots, d\right\}.$$ 
It is more flexible than the isotropic model (I) as it allows for different rates of decay of extreme dependence along the axes of a $d$-dimensional spatial grid. 
Arbitrary principal orthogonal directions can be introduced by a rotation matrix $R$ as introduced for the isotropic model in (I), here described for the case $d-1=2$: 
\begin{align}\label{vario0rot}
\wt\delta_{\bs {\wt\theta}}(\bs h,u)=
C_1 |h_1\cos \varphi- h_2\sin\varphi|^{\alpha_1}+C_2 |h_1\sin\varphi +h_2\cos \varphi |^{\alpha_2}+C_3 |u|^{\alpha_3},  (\bs h,u) \in \R^3.
\end{align} 
The new parameter vector is 
$$\bs {\wt\theta}\in \left\{(C_1,C_2,C_3,\alpha_1,\alpha_2,\alpha_3, \varphi): C_j \in (0, \infty), \alpha_j \in (0,2], j=1,2,3, \varphi \in [0,\pi/2)\right\}.
$$
In \cite{buhl1} this model is applied to extreme precipitation in Florida and, according to a specifically developed goodness-of-fit method, performs extremely well.
\\[2mm]
\textbf{(III) Time-shifted Brown-Resnick processes} \\
With the goal to allow for some influence of the spatial dependence from previous values of the process we time-shift the Gaussian processes in  the definition of the Brown-Resnick model~\eqref{BR}. 
For $\bs \tau=(\tau_1,\tau_2) \in \mathbb{R}^{d-1}$ define 
$$W^{(\bs \tau)}(\bs s,t):=W(\bs s-t \bs \tau,t).$$
Then $\{W^{(\bs \tau)}(\bs s,t): \bs s \in \mathbb{R}^{d-1}, t \in [0,\infty)\}$ is also a centred Gaussian process starting in 0 with stationary increments: for $(\bs s^{(1)},t^{(1)}), (\bs s^{(2)},t^{(2)}) \in \mathbb{R}^{d-1} \times [0,\infty)$, because of the stationary increments of $\{W(\bs s,t)\}$, where $\stackrel{d}{=}$ stands for equality in distribution,
\begin{align*}
W^{(\bs \tau)}(\bs s^{(1)},t^{(1)})-W^{(\bs \tau)}(\bs s^{(1)},t^{(1)}) & \stackrel{d}{=}W(\bs s^{(1)}- \bs s^{(2)}-(t^{(1)}-t^{(2)})\bs \tau,t^{(1)}-t^{(2)}) \\
&=W^{(\bs \tau)}(\bs s^{(1)}-\bs s^{(2)},t^{(1)}-t^{(2)}),
\end{align*}
The corresponding time-shifted dependence function is given by
$$\delta^{(\bs \tau)}(\bs s,t):=\frac{\mathbb{V}ar[W^{(\bs \tau)}(\bs s,t)-W^{(\bs \tau)}(\bs 0,0)]}{2} =\frac{\mathbb{V}ar[W(\bs s-t\bs \tau,t)-W(\bs 0,0)]}{2}=\delta(\bs s-t \tau,t),$$ 
which yields the covariance function
\begin{align*}
\mathbb{C}ov[W^{(\bs \tau)}(\bs s^{(1)},t^{(1)}),&W^{(\bs \tau)}(\bs s^{(2)},t^{(2)})]=\\
&\delta^{(\bs \tau)}(\bs s^{(1)},t^{(1)})+\delta^{(\bs \tau)}(\bs s^{(2)},t^{(2)})-\delta^{(\bs \tau)}(\bs s^{(1)}-\bs s^{(2)},t^{(1)}-t^{(2)}).
\end{align*}
By Theorem~10 of \citet{Schlather2} the process 
\begin{align}
\eta^{(\bs \tau)}(\bs s,t):=\bigvee\limits_{i=1}^{\infty} \xi_i \ex{W_i^{(\bs \tau)}(\bs s,t)-\delta^{(\bs \tau)}(\bs s,t)}=\eta(\bs s-t\bs \tau,t), \quad (\bs s,t) \in \mathbb{R}^{d-1} \times [0, \infty),
\end{align}
defines a strictly stationary space-time Brown-Resnick process. 

This method does not depend on the specific dependence function:
every Brown-Resnick process $\{\eta(\bs s,t): (\bs s,t) \in \mathbb{R}^{d-1}, t \in [0,\infty)\}$ with dependence function  $\{\delta_{\bs \theta}, {\bs \theta} \in \Theta\}$ results in a time-shifted Brown-Resnick process with dependence function \\$\{\delta^{(\bs \tau)}_{\bs{\theta}}, {\bs \theta} \in \Theta, \bs \tau\in\R^{d-1}\}$. 
To give an example, for the Brown-Resnick process (II) without rotation, the parametrised time-shifted dependence function is given by
\begin{align}
\delta_{\bs \theta}^{(\bs \tau)}(\bs h,u)=\sum\limits_{i=1}^{d-1} C_i|h_i-u \tau_i|^{\alpha_i}+C_{d}|u|^{\alpha_{d}}, \quad (\bs h,u) \in \mathbb{R}^d \label{BKLU_timeshift}
\end{align}
with parameter vector  $$({\bs \theta},\bs \tau) \in \left\{(C_j,\alpha_j, j=1,\ldots,d): C_j \in (0, \infty), \alpha_j \in (0,2], j=1, \ldots, d\right\}\times \bbr^{d-1}.$$
This model is somewhat motivated by the time-shifted moving maxima Brown-Resnick process introduced by \citet{Koch}, it is however much simpler to analyse and to estimate. 
As a referee has pointed out, similar models have been suggested in Section~5.3.2, models (ii)-(iv) on p.~213 in \citet{HuserPhd}.
\medskip

In the following we show that models (I)-(III) satisfy Assumption~\ref{regcondgls} and the conditions of Theorem~\ref{GLSEcons} and Corollary~\ref{sufficient_BR}.

\subsection*{\bf Asymptotic properties of models (I)-(III)}

As before, we assume space-time observations on $\mathcal{D}_n=\cals \times \mathcal{T}=(\cals \times \mathcal{T})(n)$, where $\cals \subset \mathbb{Z}^{d-1}$ are the spatial and $\mathcal{T} \subset \mathbb{Z}$ the time series observations. 
Moreover, we assume that they decompose into $\mathcal{D}_n=\mathcal{F} \times \mathcal{I}_n$, where $\mathcal{F} \subset \mathbb{Z}^q$ is some fixed domain and $\mathcal{I}_n=\{1,\ldots,n\}^w$ is a sequence of regular grids, and $q+w=d$. 
 
For two points $(\bs s^{(1)},t^{(1)})$ and $(\bs s^{(2)},t^{(2)}) \in \mathbb{R}^{d-1} \times [0, \infty)$, we denote by $(\bs h,u)=(\bs s^{(1)},t^{(1)})-(\bs s^{(2)},t^{(2)}) \in \mathbb{R}^d$ their space-time lag vector. 
Furthermore, we choose Borel sets $A=B=(\underline{A},\infty)$ for some $\underline{A}>0$.  
We denote by $\widehat{\rho}_{AB,m_n}(\bs h,u)$ the (possibly bias-corrected) empirical space-time extremogram \eqref{biascorrectedextremo}, sampled at lags in $\calh \subset \bbr^d$, and by $\widehat{\bs \theta}_{n,V}$ the  GLSE \eqref{GLSE}, referring to some positive definite weight matrix $V$. 

To show consistency and asymptotic normality of the corresponding GLSE, we need to verify the assumptions required in Theorem~\ref{GLSEcons}; i.e. the relevant parts of Assumption~\ref{regcondgls}.
Note that Corollary~\ref{sufficient_BR} applies for all models, since they all satisfy $\delta_{\bs \theta}(\bs h,u) \geq C|u|^{\alpha}$ for $C>0$ and $\alpha \in (0,2]$. 
Thus we obtain the CLTs of the empirical extremogram centred by the pre-asymptotic extremogram \eqref{asyspace_BR}, centred by the true one \eqref{asyspace_BR_true} and of the bias corrected empirical extremogram centred by the true one \eqref{asynbias}.
Hence (G1) and (G2) hold for the empirical extremogram. Furthermore, we assume that the parameter space $\Theta \subset \bbr^k$, which contains the true parameter $\bs \theta^\star$ as an interior point, is a compact subset of the spaces introduced above for the corresponding models. 

The following requirements concern the model-independent assumptions.
\begin{enumerate}[$\bullet$]
\item
In order to determine the GLSE we need to choose a positive definite matrix $V(\bs \theta)$ for $\bs \theta \in \Theta$, and we take one, which satisfies condition (G5ii) with $z_2=1$. Due to compactness of the parameter space $\Theta$, condition (G5i) is therefore automatically satisfied.
\item
We require that $|\mathcal{H}| \geq k$, such that the rank condition (G6)
can be satisfied.
\end{enumerate}

Next we discuss the model-dependent assumptions. First note that the smoothness condition (G4) is satisfied for $z_1=0$ for all models $\{\rho_{AB,\bs \theta}(\cdot)\}$ (equivalently $\{\delta_{\bs \theta}(\cdot)\}$). Furthermore, due to compactness of the parameter space, it suffices to show condition (G3') in order to verify identifiability of the models. 
Condition (G3') is satisfied for models (I)-(III) if for two distinct parameter vectors $\bs \theta^{(1)} \neq \bs \theta^{(2)}$ there is at least one $(\bs h,u) \in \mathcal{H}$ such that $\rho_{AB,\bs \theta^{(1)}}(\bs h,u)\neq \rho_{AB,\bs \theta^{(2)}}(\bs h,u)$ {or, equivalently, $\delta_{\bs \theta^{(1)}}(\bs h, u)\neq \delta_{\bs \theta^{(2)}}(\bs h, u)$.}
This holds due to the power function structure of the models. 
For the geometric anisotropic model in (I) we need to exclude $c=1$ to ensure identifiability of the angle $\varphi$; however, if $c=1$ then $\varphi$ has no influence on the dependence function and can be neglected.
Thus, the GLSEs are consistent according to Theorem~\ref{GLSEcons}. 

We now turn to the CLT \eqref{asymptGLS}, where it remains to show (G4) for $z_1=1$.
Difficulties arise due to norms and absolute values of certain parameters in the model equations:
\begin{enumerate}[$\bullet$]
\item
In their basic forms without rotation or dilution, models (I) and (II) are infinitely often continuously partially differentiable in the model parameters. Hence asymptotic normality of the GLSEs follows by Theorem~\ref{GLSEcons}.
\item 
If rotation and/or dilution parameters are included, continuous partial differentiability still holds under the following restrictions: Let $\alpha_1$ (for model (I)) or \\$\alpha_1,\ldots,\alpha_{d-1}$ (for model (II)) be the spatial smoothness parameters. 
Since they are the powers of some norm or absolute value, 
restricting them to values in $[1,2]$ makes the models continuously partially differentiable; {otherwise, they are partially differentiable everywhere but not in 0. }
As to model (II), in the case $d-1=2$, one of the parameters $\alpha_1$ and $\alpha_2$ being larger than 1 is already sufficient.
To see this, recall that the spatial part of the dependence function is given by 
$$C_1|h_1\cos \varphi - h_2\sin \varphi |^{\alpha_1}+C_2|h_1\sin \varphi + h_2\cos \varphi |^{\alpha_2},\quad (h_1,h_2) \in \mathbb{R}^2.$$  
Assume w.l.o.g that $\alpha_2>1$. 
Then critical values of $\varphi \in [0,\pi/2)$ are the roots of $h_1\cos \varphi - h_2\sin \varphi $. 
Given a value $h_2 \in \mathbb{R}$ we need to choose $h_1 \in \mathbb{R}$ such that $h_1 \neq  h_2\tan \varphi $ for all $\varphi \in [0,\pi/2)$. 
Since $\tan \varphi>0$ for $\varphi \in [0,\pi_2)$, we can choose $h_1$ such that $\text{sgn}(h_1)=-\text{sgn}(h_2)$. 
If all lags $(h_1,h_2,u) \in \mathcal{H}$ are chosen such that $(h_1,h_2)$ have opposite signs (or, trivially, are equal to $(0,0)$) and if  ${\rm rank}(\Rho_{AB}(\bs \theta^{\star}))=k$, then the GLSE is asymptotically normal. 
\item 
Model (III) is continuous partially differentiable, if the spatial smoothness parameters $\alpha_i$ for $i=1,\ldots,d-1$ are all larger than 1. 
If $\alpha_i \leq 1$ for some $i$, then the term $C_i|h_i-u\tau_i|^{\alpha_i}$ is, as a function of $\tau_i$,  not differentiable at $\tau_i=h_i/u \in \mathbb{R}$. 
However, it is possible to restrict the parameter space such that such equalities do not occur.
\end{enumerate}

\section{Simulation study}\label{s6}

\subsection*{\bf Specifications}

Consider the framework of Section~\ref{s32}. 
In particular, let $\{\eta(\bs s,t): \bs s \in \bbr^{2}, t \in [0,\infty)\}$ be a strictly stationary space-time Brown-Resnick process \eqref{BR} observed on $\cald_n=\calf \times \cali_n$. 
Denote by {$\wh{\rho}_{AB,m_n}(\bs h,u)$} the space-time version of the (possibly bias corrected) empirical extremogram given in \eqref{biascorrectedextremo}, sampled at lags in $\calh \subset \bbr^d$, where $\calh$ is specified below and we choose the sets $A=B=(1,\infty)$. 
As already indicated in its Definition~\ref{ee}(1), 
{the computation involves the practical issue of choosing the value $a_{m_n}=m_n=:q$ as a large quantile, where the first equality is due to  the standard unit Fr{\'e}chet distribution of the marginals of the Brown-Resnick model, so that $q$ should be chosen as a large quantile of the standard unit Fr{\'e}chet distribution. 
In a data example it should be chosen from a set $Q$ of large empirical quantiles of $\{\eta(\bs s,t): (\bs s,t) \in \cald_n\}$ for which the empirical extremograms $\wh{\rho}_{AB,q}(\bs h,u)$, are robust. 
{For a practical guideline see \citet{Davis2}, Section~3.4 and the upper left panel of their Figure~1, and also \citet{DMZ} after their Theorem~2.1.}
In the following simulation scenarios we choose the lowest quantile of a given level of the sets $Q$. Note that due to the variability of the large empirical quantiles, this might involve (as below) the choice of different quantiles in different data examples.

In order to test the small sample performance of the GLSE $\wh{\bs\theta}_{n,V}$ defined in \eqref{GLSE}, we consider some of the models (I)-(III) for the dependence function $\delta_{\bs \theta}$. 
For the simulations we use the \texttt{R}-package \texttt{RandomFields} (\cite{Schlather5}) and the exact method {via extremal functions} proposed in \citet{Dombry2}, {Section~2}. {In this simulation study we use standardised univariate margins. If this in not the case (as for instance in the data example treated in Section~5 of \citet{buhl1}), they need to be estimated and standardised first, which naturally might lead to inferior estimation results.}\\[2mm]
\textbf{(i) Spatially isotropic fractional space-time model}\\
We generate 100 realisations from the 
model \eqref{varioStein} on a grid of size 15x15x300. 
This corresponds to the situation of a fixed spatial and an increasing temporal observation area; i.e., it is given by $\cald_n=\calf \times \cali_n$ with $\calf=\{1,\ldots,15\}^2$ and $\cali_n=\{1,\ldots,300\}$. 
We simulate the model with the  true parameter vector   
$$\bs \theta_1^\star=(0.8,0.4,1.5,1),$$
which we assume to lie in a compact subset of 
$$\Theta_1=\left\{(C_1,C_2,\alpha_1,\alpha_2): C_1,C_2 \in (0, \infty), \alpha_1,\alpha_2 \in (0,2]\right\}.$$
As the large empirical quantile $q$ we take the $96\%$-quantile of $\{\eta(\bs s,t): (\bs s,t) \in \cald_n\}$. \\[2mm]
\textbf{(ii) Geometrically anisotropic fractional space-time model}\\
We generate 100 realisations from  model \eqref{varioStein_aniso} on a grid of size 15x15x300. 
This corresponds to the same situation as in (i).
We simulate the model with the true parameter vector
$$\bs \theta_2^\star=(0.8,0.4,1.5,0.5,3,\pi/4),$$
which we assume to lie in a compact subset of 
\begin{align*}
\Theta_2=\big\{(C_1,C_2,&\alpha_1,\alpha_2,c,\varphi): \\
&C_1,C_2 \in (0, \infty), \alpha_1 \in [1,2],\alpha_2 \in (0,2],c>0, \varphi \in [0,\pi/2)\big\},
\end{align*}
where we choose $\alpha_1 \geq 1$ to ensure differentiability of the model, cf. the discussion in Section~\ref{s32}. 
As the large empirical quantile $q$ we take the $97\%$-quantile of $\{\eta(\bs s,t): (\bs s,t) \in \cald_n\}$. \\[2mm]
\textbf{(iii) Spatially anisotropic time-shifted model}\\
 We generate 100 realisations from model \eqref{BKLU_timeshift} on a grid of size 40x40x40, and consider this as a situation where the observation area increases in all dimensions; i.e., it is given by $\cald_n=\cali_n$ with $\cali_n=\{1,\ldots,40\}^3$. 
We simulate the model with the  true parameter vector  
$$\bs \theta_3^\star=(0.4,0.8,0.5,1.5,1.5,1,1,1),$$
which we assume to lie in a compact subset of 
$$\Theta_3=\left\{(C_1,C_2,C_3,\alpha_1,\alpha_2,\alpha_3,\tau_1,\tau_2): C_j \in (0, \infty), \alpha_1,\alpha_2 \in [1,2], \alpha_3 \in (0,2],\tau_j \in \bbr\right\},$$
where we choose $\alpha_1,\alpha_2 \geq 1$ to ensure differentiability of the model, cf. the discussion in Section~\ref{s32}.
As the large empirical quantile $q$ we take the $95\%$-quantile of $\{\eta(\bs s,t): (\bs s,t) \in \cald_n\}$.
\halmos\\[2mm]

\noindent
In all three settings we base the estimation on the set $\mathcal{H}$ of lags  given by
\begin{align*}
\mathcal{H}=\{&(0,0,1),(0,0,2),(0,0,3),(0,0,4),(1,0,0),(2,0,0),(3,0,0),\\&(4,0,0),(2,1,0),(4,2,0),(1,2,0),(2,4,0),(1,1,1),(2,2,2),(1,3,2)\}.
\end{align*}
With this choice we ensure that the lag vectors vary in all three dimensions so that we obtain reliable estimates. Generally one should choose $\mathcal{H}$ such that the whole range of clear extremal dependence is covered. However, beyond that, no lags should be included for the estimation, since independence effects can introduce a bias in the  least squares estimates, similarly as in pairwise likelihood estimation; cf. \citet{buhl1}, Section~5.3. 
One way to determine the range of extremal dependence are permutation tests, which are described in \citet{Steinkohl3}, Section~6. 
From those tests we know that our choice of lags satisfies this requirement for all three models.

For the weight matrix $V$ in \eqref{GLSE} we propose two choices, which yield equally good results in our statistical analysis. 
The first choice is $V_1=\diag\{\exp(-\|(\bs h,u)\|^2): \bs (\bs h,u) \in \mathcal{H}\},$ which reflects the exponential decay of the tail dependence coefficients $\rho_{(1,\infty)(1,\infty)}(\bs h,u)$ of Brown-Resnick processes given by tail probabilities of the standard normal distribution. 
The second choice is to include the (possibly bias corrected) empirical extremogram estimates as in $V_2=\diag\{\wh{\rho}_{(1,\infty)(1,\infty),q}(\bs h,u): (\bs h,u) \in \mathcal{H}\}$ (provided this is a valid choice; i.e., $V_2$ has only positive diagonal entries).
Since the so defined weight matrix is random, what follows is conditional on its realisation.
It is in practice not possible to incorporate the asymptotic covariance matrix $\Pi$ of the empirical extremogram estimates $(\wh{\rho}_{(1,\infty)(1,\infty),q}(\bs h,u): (\bs h,u) \in \mathcal{H})$ (cf. Remark~\ref{rem_weight_matrix}) to obtain a weight matrix that is optimal in theory. 
As can be seen from its specification in Theorem~\ref{stasyn}, it contains infinite sums and is, hence, numerically hardly tractable.

\subsection*{\bf Results}

For each of the scenarios (i)-(iii) we report the mean, the mean absolute error (MAE), the root mean squared error (RMSE), {and a relative root mean squared error (REL) of the resulting GLSEs for the 100 simulations. 
Exemplary for the parameter $C_1$, the REL is defined as
$$\sqrt{\frac{1}{100}\sum_{j=1}^{100} \Big[\frac{\wh{C}_{1,j}-C_1^\star}{C_1^\star}\Big]^2},$$ 
where $C_1^\star$ denotes the true parameter value and $\wh{C}_{1,j}$ the $j$th parameter estimate.}

 As weight matrix we choose {$V_2=\diag\{\wh{\rho}_{(1,\infty)(1,\infty),q}(\bs h,u): (\bs h,u) \in \mathcal{H}\}$ defined above.} The average computing time per simulation depends on the complexity of the model (i.e., the number of parameters to be estimated) and more crucially on the chosen set $\mathcal{H}$ and on the grid size. We report an average time of 14.51 seconds for scenario (i), 14.95 seconds for scenario (ii) and 14.63 seconds for scenario (iii).
The {estimation} results are summarised in Tables~\ref{summary1}-\ref{summary3}. Furthermore, in Figures~\ref{sim_I}-\ref{sim_III}  we plot the parameter estimates 
and add $95\%$-confidence bounds found by {subsampling}; cf. \citet{Politis4}, Chapter~5. 
We use subsampling methods, since the asymptotic covariance matrix $\Pi_V$ specified in Theorem~\ref{GLSEcons} contains the matrix $\Pi$ as specified in Theorem~\ref{stasyn}, which is, as explained above, hardly tractable. 
The fact that subsampling yields asymptotically valid confidence intervals for the true parameter vectors $\bs\theta_i^\star$ for $i=1,2,3$ can be proved analogously to the proof of {Theorem~3.5} in \citet{Steinkohl3} based on Corollary~5.3.4 of \cite{Politis4}. 
It requires mainly the existence of continuous limit distributions of $\sqrt{n^w/m_n^d}\|(\wh{\bs\theta}_{n,V}-\bs\theta_i^\star)\|$, which are guaranteed by Theorem~\ref{GLSE}, and some conditions on the $\alpha$-mixing coefficients, which can be shown similarly as those required in Theorem~\ref{stasyn}.

Summarising our results, we find that the GLSE estimates the model parameters accurately.
Bias and variance are largest for the parameter estimates of model (ii). 
There are two main reasons for this. 
Compared to model (i), for model (ii) we estimate two more parameters based on the same observation scheme.
However, one is a direction, which is non-trivial to estimate and decreases the overall quality of the estimates. 
For the estimation of model (iii) the observation scheme is different; in particular, there is a relatively large number of both spatial and temporal observations available.
In contrast, in the setting of models (i) and (ii) only the number of temporal observations is large.

{From Tables~\ref{summary1} and~\ref{summary2} we conclude that bias and REL of the spatial parameter estimates $\wh{C}_1$ and $\wh{\alpha}_1$ are comparable with those of the temporal parameter estimates $\wh{C}_2$ and $\wh{\alpha}_2$. Bias of the spatial estimates is slightly larger than bias of the temporal estimates, which might be due to the fact that only the number of temporal observations is large.}

{From Table~\ref{summary3} we read off that the RELs of the estimates $\wh{C}_1$ and $\wh{\alpha}_1$, which correspond to the first spatial dimension, are slightly smaller than those of  $\wh{C}_2$ and $\wh{\alpha}_2$. 
A reason for this might be the choice of the lag vectors which we included in the set $\mathcal{H}$ and which show more variation with respect to the first dimension than with respect to the second.}

In her PhD thesis, \citet{steinkohlphd} compares computing times of the commonly applied pairwise likelihood estimation with the semiparametric method described in \citet{Steinkohl3}, which can be regarded as a special case of the method described in this paper. She reports in Table~6.4  a reduction of computing time by about a factor 15. Furthermore, in Section~5 of  \cite{Steinkohl3} we show that the semiparametric methods are more robust against small deviations from the model assumptions such as measurement errors.
\begin{table}[H]
\begin{center}\begin{small}
\captionsetup{type=table}
\begin{tabular}{c|c|c|c|c|c}
& TRUE & MEAN & MAE & RMSE & { REL}\\
\hline
$\wh{C}_1$ & 0.8 & 0.7856 & 0.1353 & 0.1763 & { 0.2204}\\
$\wh{C}_2$ & 0.4 & 0.3987 & 0.0785 & 0.0995 & { 0.2486}\\
$\wh{\alpha}_1$ & 1.5 & 1.4830 & 0.0897 & 0.1131 & { 0.0754}\\
$\wh{\alpha}_2$ & 1 & 0.9916 & 0.0625 & 0.0820 & { 0.0820}
\end{tabular}\end{small}
\captionof{table}{True parameter values (first column) and mean, MAE, RMSE,   {and REL} of the estimates of the parameters of model (i).}
\label{summary1}
\end{center}
\begin{center}\begin{small}
\captionsetup{type=table}
\begin{tabular}{c|c|c|c|c|c}
& TRUE & MEAN & MAE & RMSE & { REL}\\
\hline
$\wh{C}_1$ & 0.8 & 0.7270 & 0.2750 & 0.3350 & { 0.4192}\\
$\wh{C}_2$ & 0.4 & 0.3708 & 0.1097 & 0.1377 & { 0.3443}\\
$\wh{\alpha}_1$ & 1.5 & 1.4349 & 0.2274 & 0.2692 & { 0.1794}\\
$\wh{\alpha}_2$ & 0.5 & 0.5143 & 0.0491 & 0.0684 & { 0.1369}\\
$\wh{c}$ & 3 & 2.9441 & 0.1365 & 0.2645 & { 0.0882}\\
$\wh{\varphi}$ & $\pi/4$ & 0.7906 & 0.1214 & 0.1567 & { 0.1995 }
\end{tabular}\end{small}
\captionof{table}{True parameter values (first column) and mean, MAE, RMSE, {and REL} of the estimates of the parameters of model (ii).}
\label{summary2}
\end{center}
\begin{center}\begin{small}
\captionsetup{type=table}
\begin{tabular}{c|c|c|c|c|c}
& TRUE & MEAN & MAE & RMSE  & { REL}\\
\hline
$\wh{C}_1$ & 0.4 & 0.4072 & 0.0690 & 0.0898 & { 0.2244}\\
$\wh{C}_2$ & 0.8 & 0.8482 & 0.1667 & 0.2187 & { 0.2734}\\
$\wh{C}_3$ & 0.5 & 0.5003 & 0.1085 & 0.1366 & { 0.2733}\\
$\wh{\alpha}_1$ & 1.5 & 1.5144 & 0.0594 & 0.0781 & {0.0521}\\
$\wh{\alpha}_2$ & 1.5 & 1.5043 & 0.1054 & 0.1282 &  {0.0855}\\
$\wh{\alpha}_3$ & 1 & 0.9694 & 0.1082 & 0.1415 & { 0.1415}\\
$\wh{\tau}_1$ & 1 & 1.0459 & 0.0945 & 0.1250 & { 0.1250}\\
$\wh{\tau}_2$ & 1 & 0.9916 & 0.0320 & 0.0420 & { 0.0420}
\end{tabular}\end{small}
\captionof{table}{True parameter values (first column) and mean, MAE, RMSE, {and REL} of the estimates of the parameters of model (iii).}
\label{summary3}
\end{center}
\end{table}

\begin{figure}[H]
\centering
\subfloat[]{\includegraphics[scale=0.175]{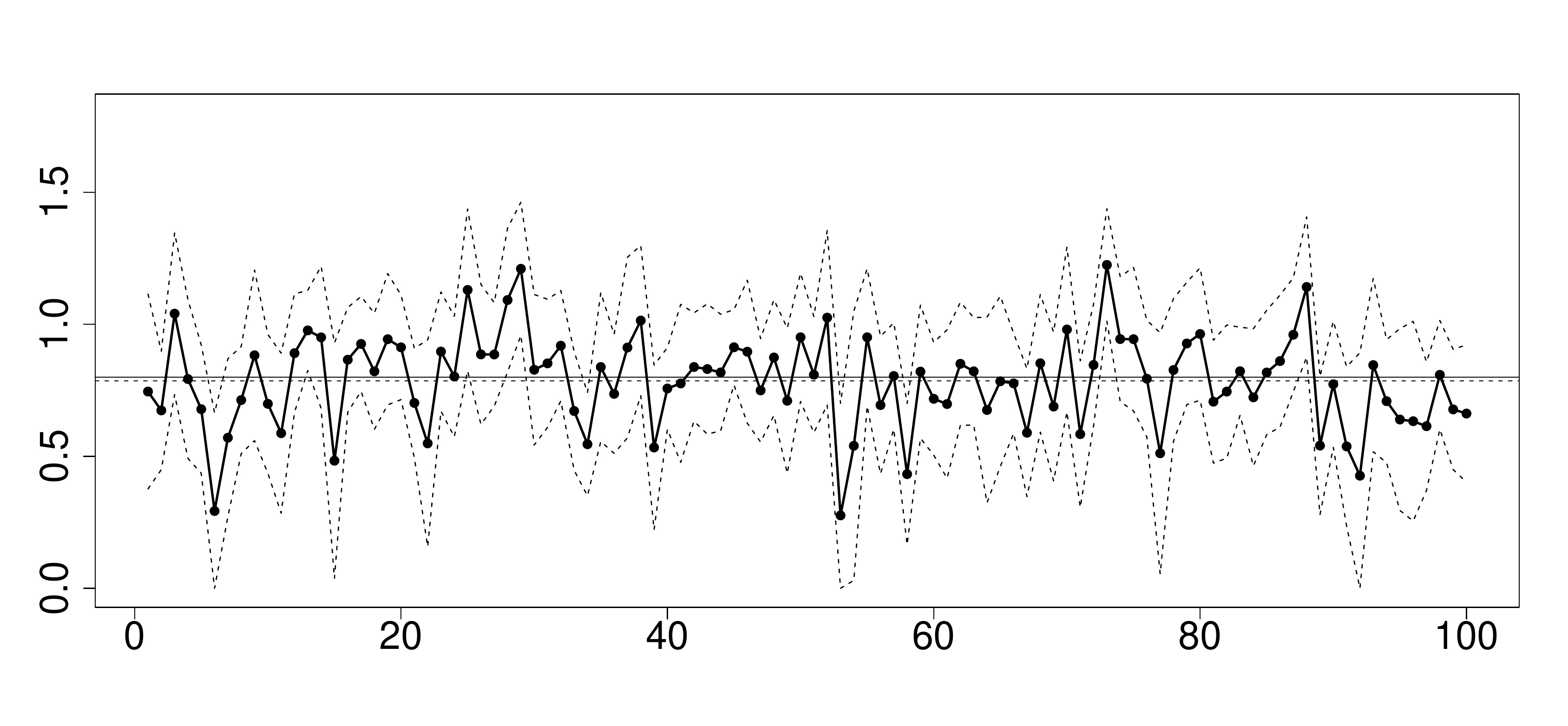}}
\subfloat[]{\includegraphics[scale=0.175]{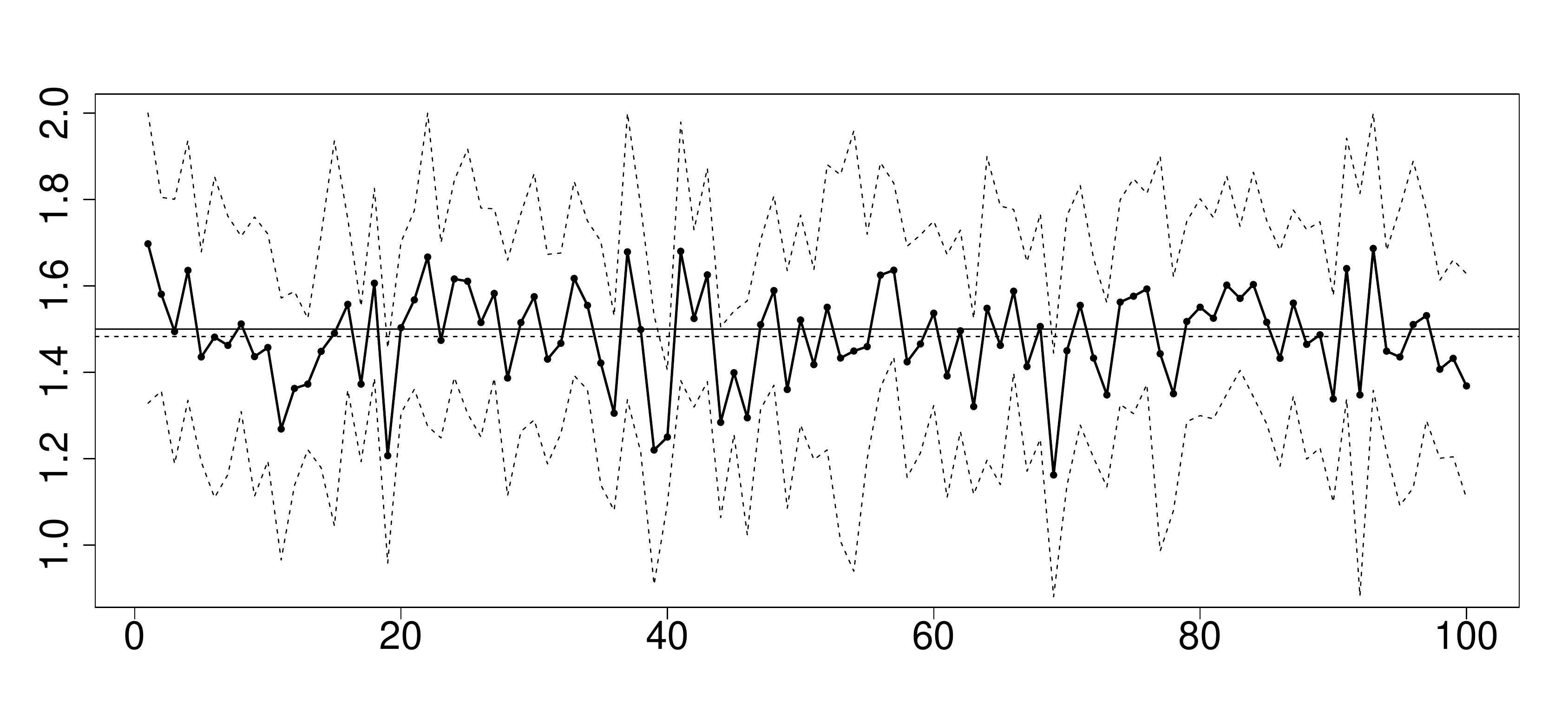}}\\[-7mm]
\subfloat[]{\includegraphics[scale=0.175]{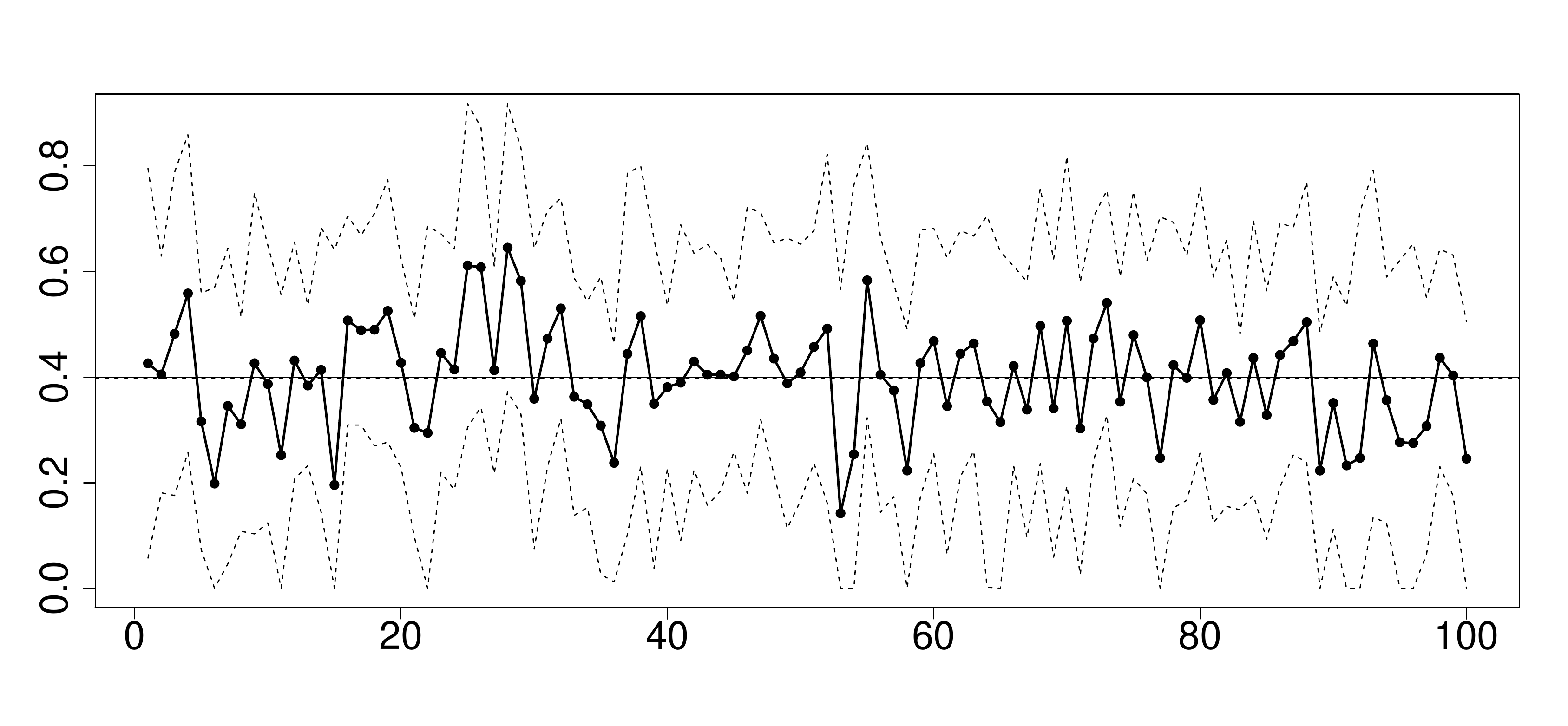}}
\subfloat[]{\includegraphics[scale=0.175]{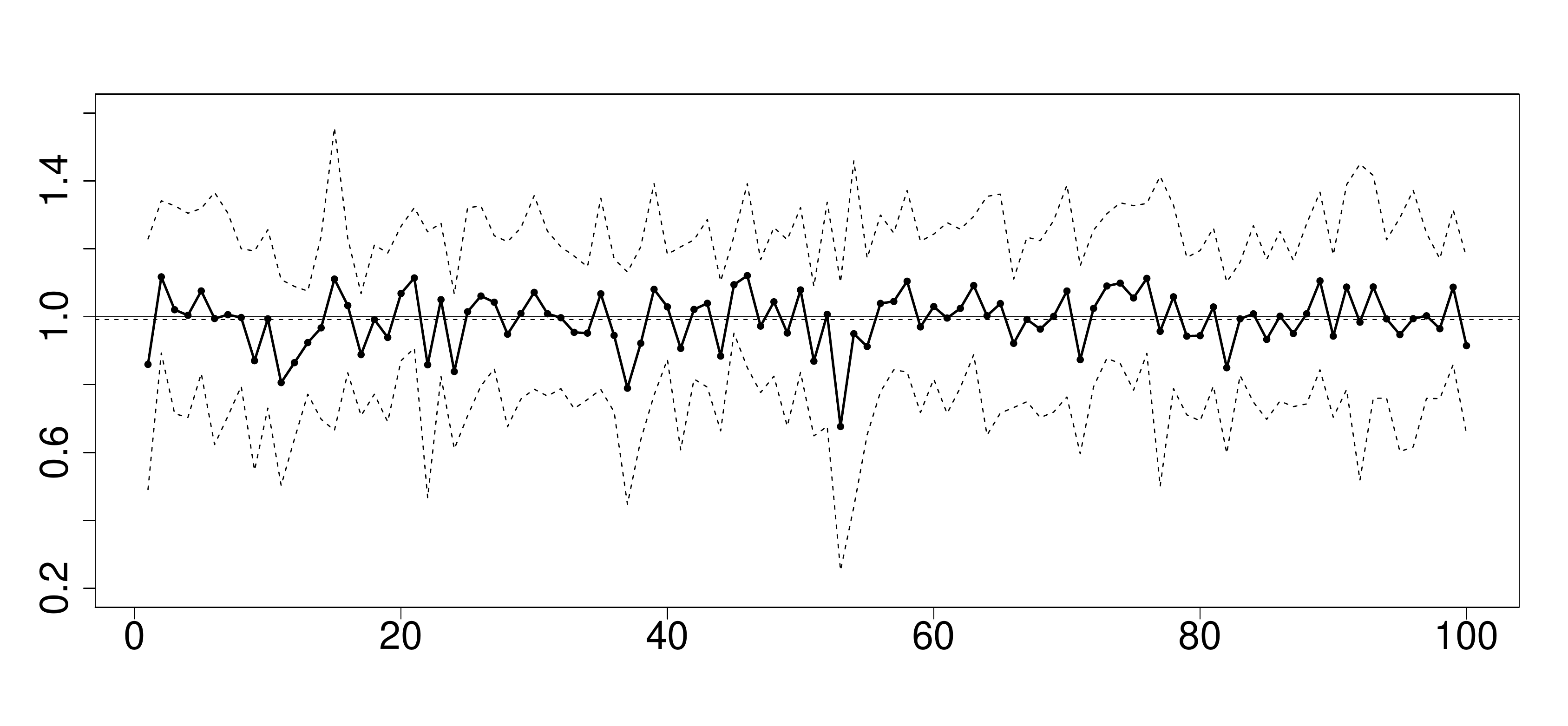}}\\[-7mm]
\caption{GLSEs of the parameters of model (i) for 100 simulated Brown-Resnick space-time processes together with pointwise $95\%$-subsampling confidence intervals (dotted). 
First row: $C_1$, $\alpha_1$, second row: $C_2$, $\alpha_2$. 
The middle solid line is the true parameter value and the middle dotted line represents the mean over all estimates.} \label{sim_I}
\end{figure}

\begin{figure}
\centering
\subfloat[]{\includegraphics[scale=0.175]{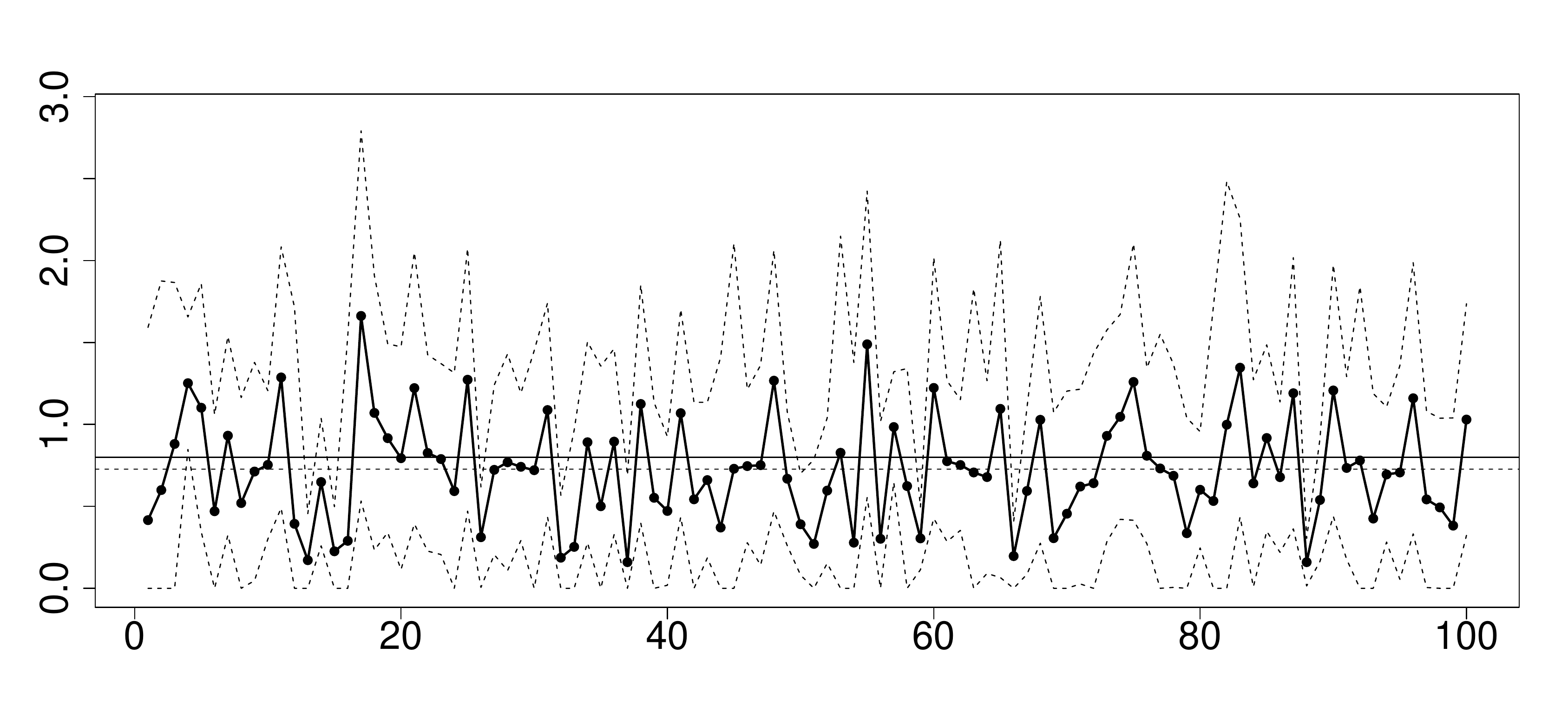}}
\subfloat[]{\includegraphics[scale=0.175]{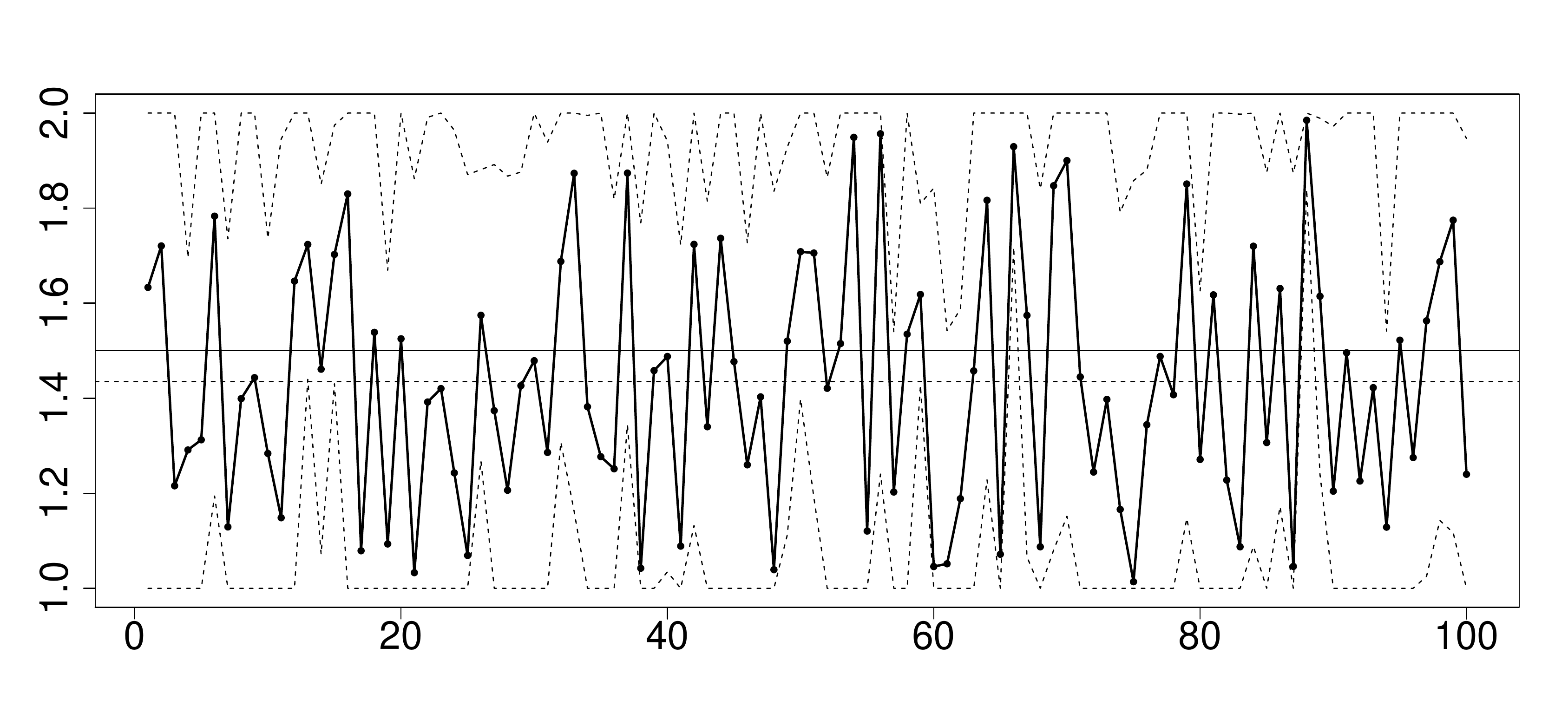}}\\[-7mm]
\subfloat[]{\includegraphics[scale=0.175]{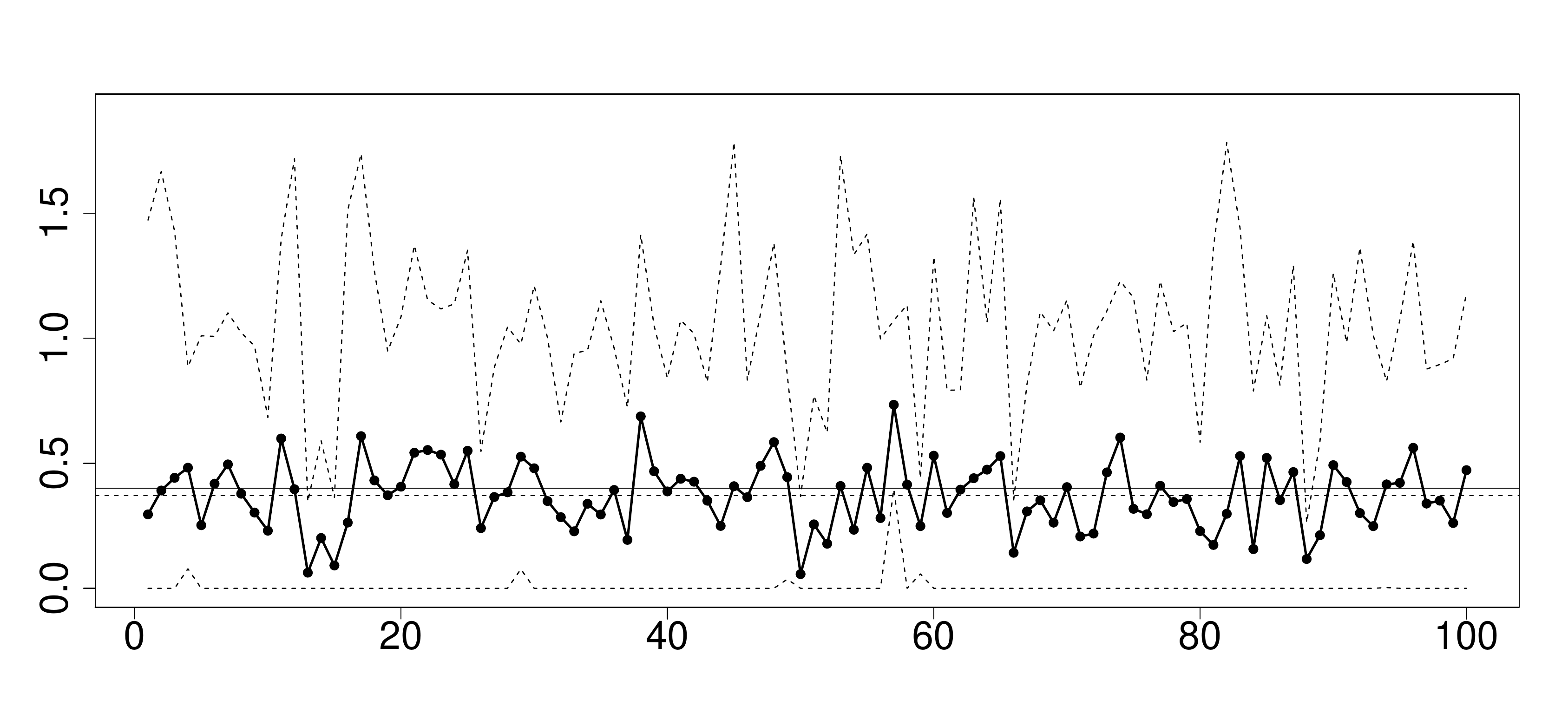}}
\subfloat[]{\includegraphics[scale=0.175]{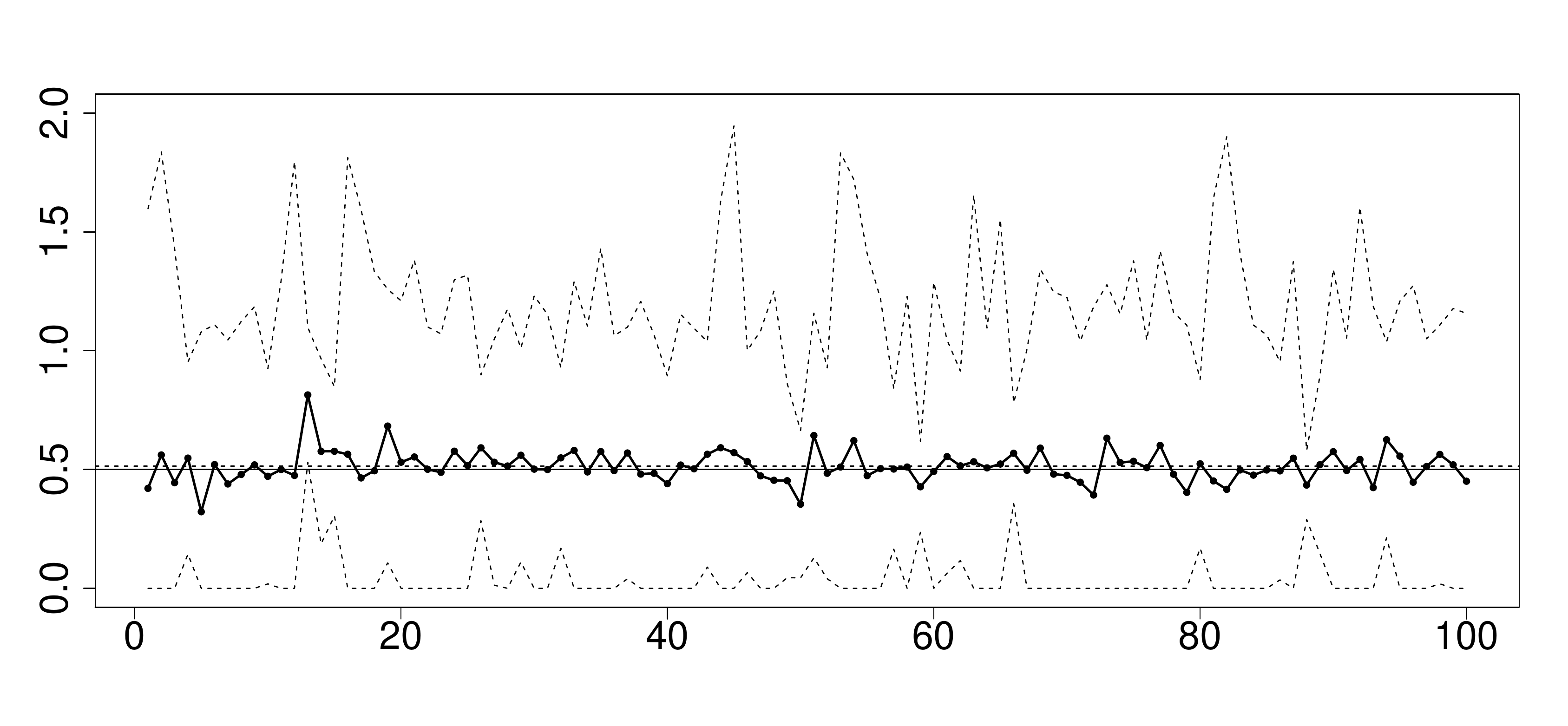}}\\[-7mm]
\subfloat[]{\includegraphics[scale=0.175]{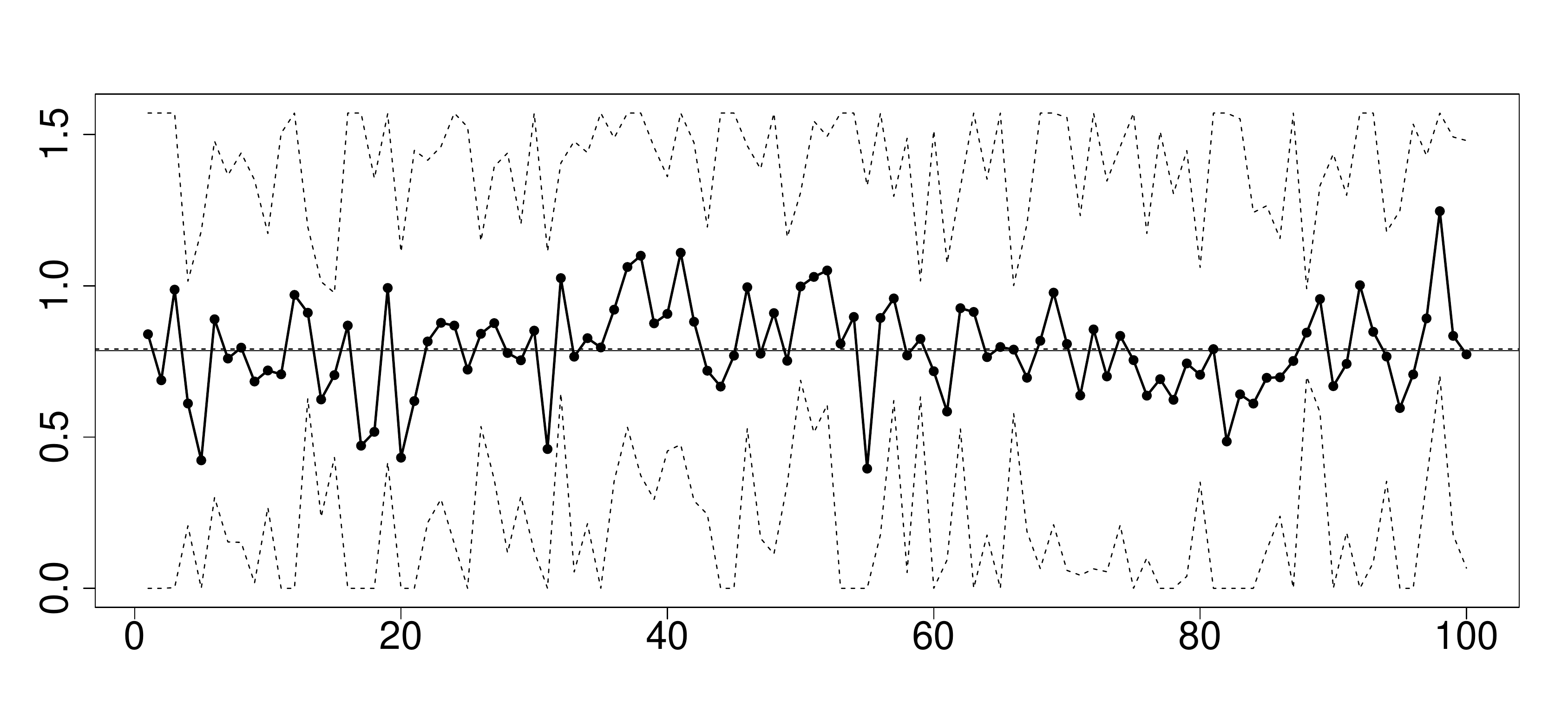}}
\subfloat[]{\includegraphics[scale=0.175]{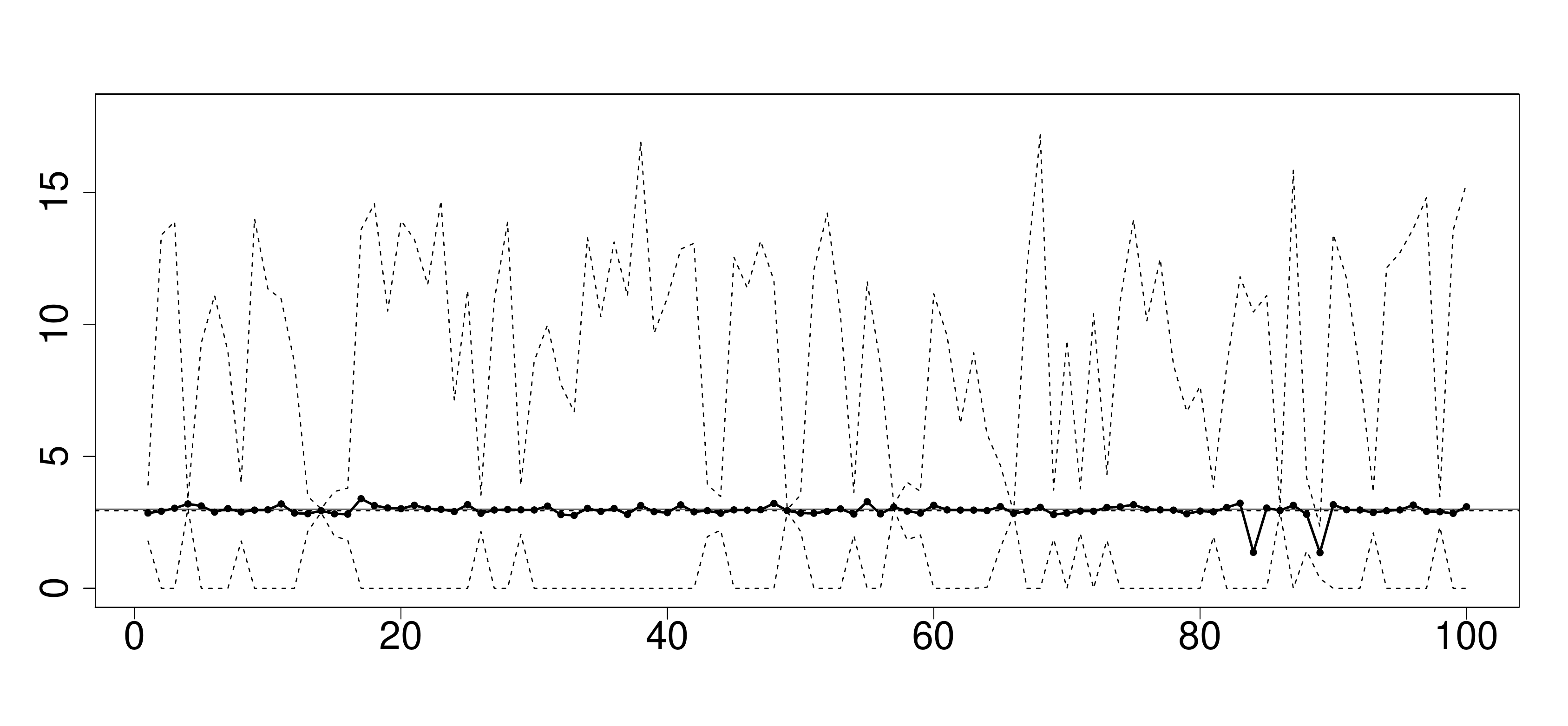}}
\caption{GLSEs of the parameters of model (ii) for 100 simulated Brown-Resnick space-time processes together with pointwise $95\%$-subsampling confidence intervals (dotted). 
First row: $C_1$, $\alpha_1$, middle row: $C_2$, $\alpha_2$, last row: $\varphi$ and $c$. 
The middle solid line is the true value and the middle dotted line represents the mean over all estimates.} \label{sim_II}
\end{figure}

\begin{figure}
\centering
\subfloat[]{\includegraphics[scale=0.175]{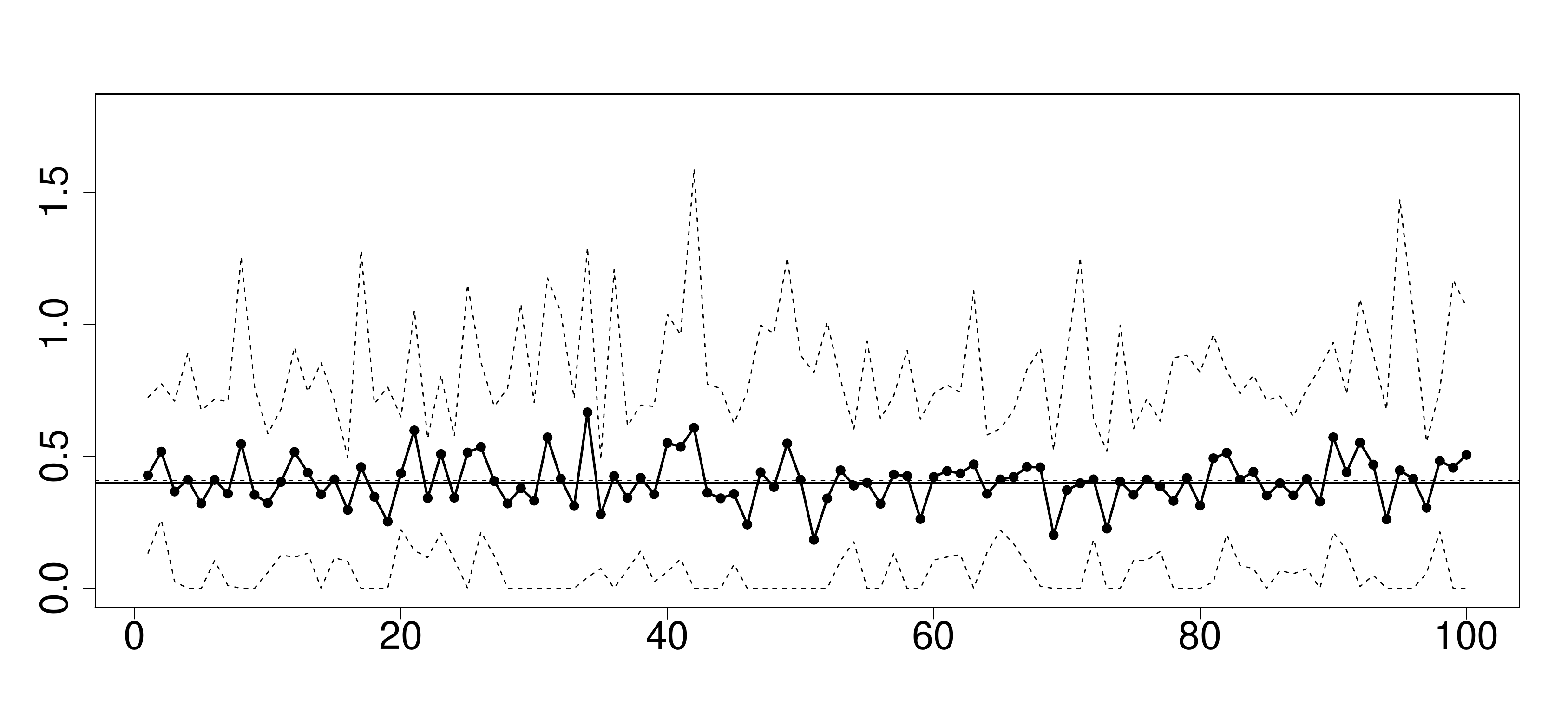}}
\subfloat[]{\includegraphics[scale=0.175]{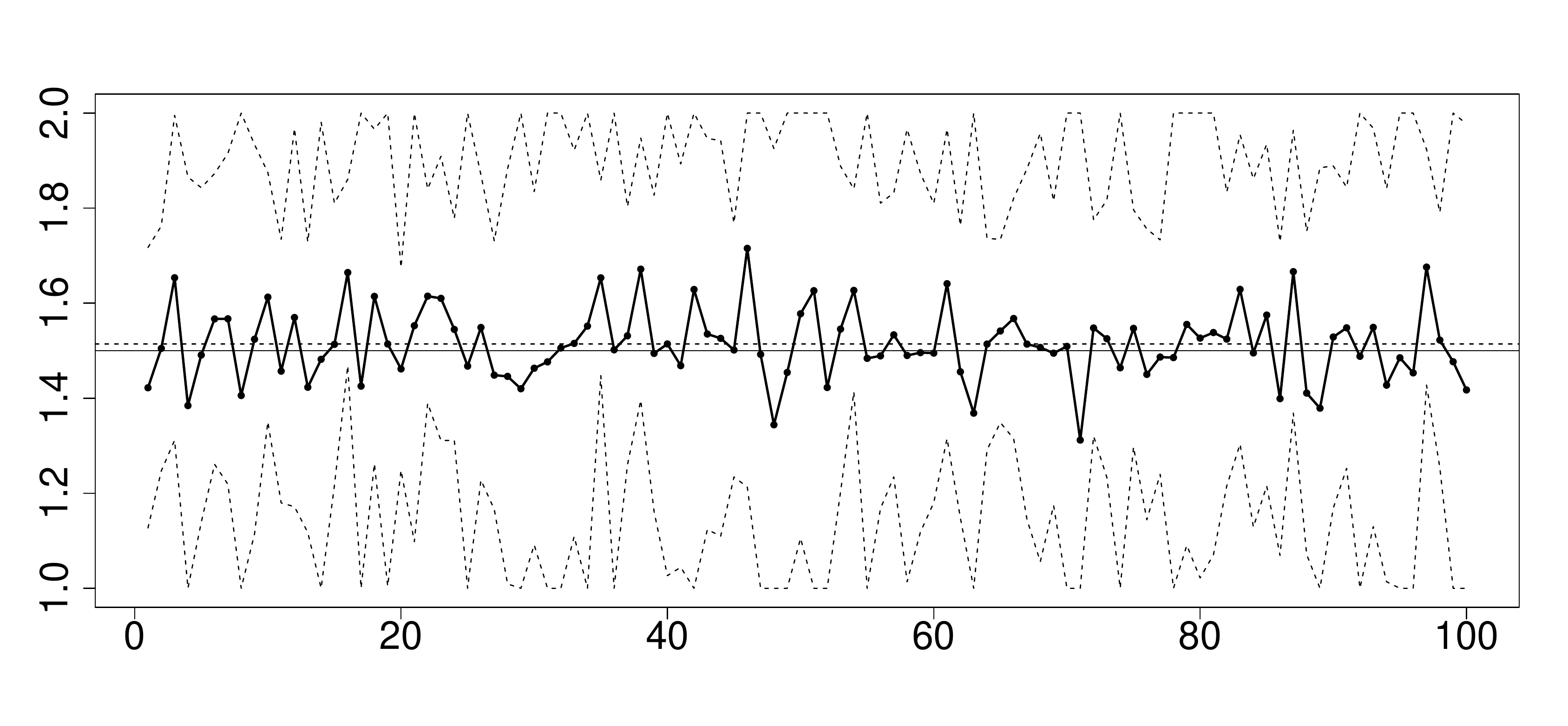}}\\[-7mm]
\subfloat[]{\includegraphics[scale=0.175]{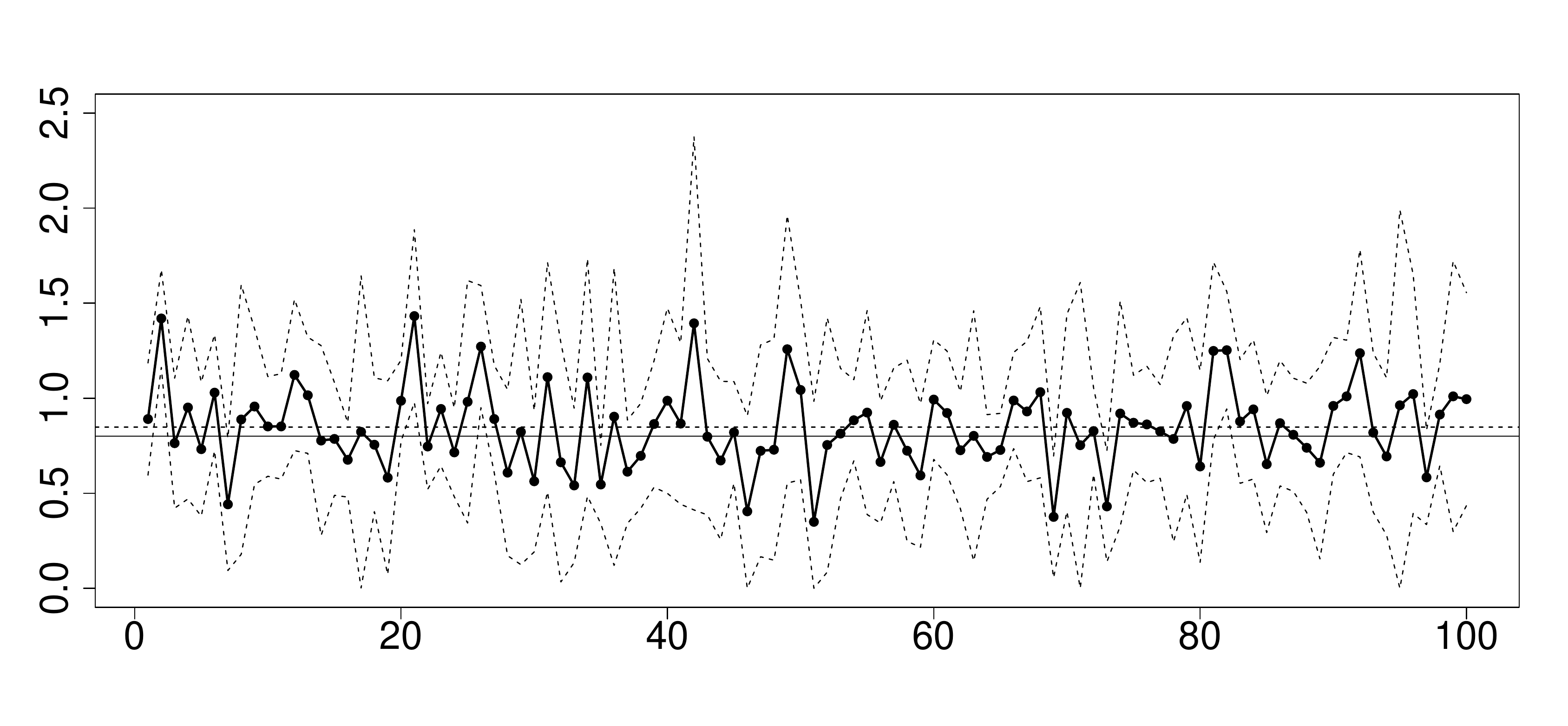}}
\subfloat[]{\includegraphics[scale=0.175]{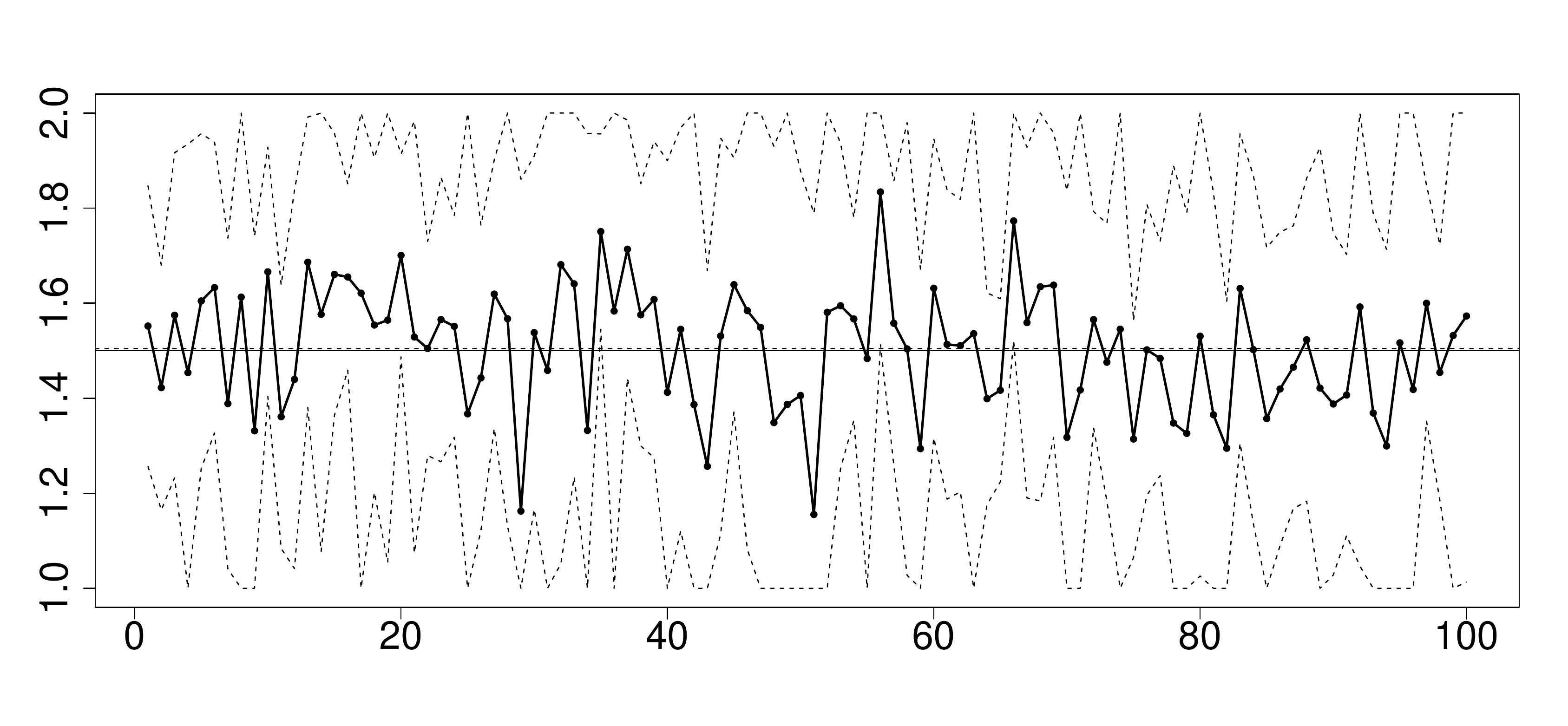}}\\[-7mm]
\subfloat[]{\includegraphics[scale=0.175]{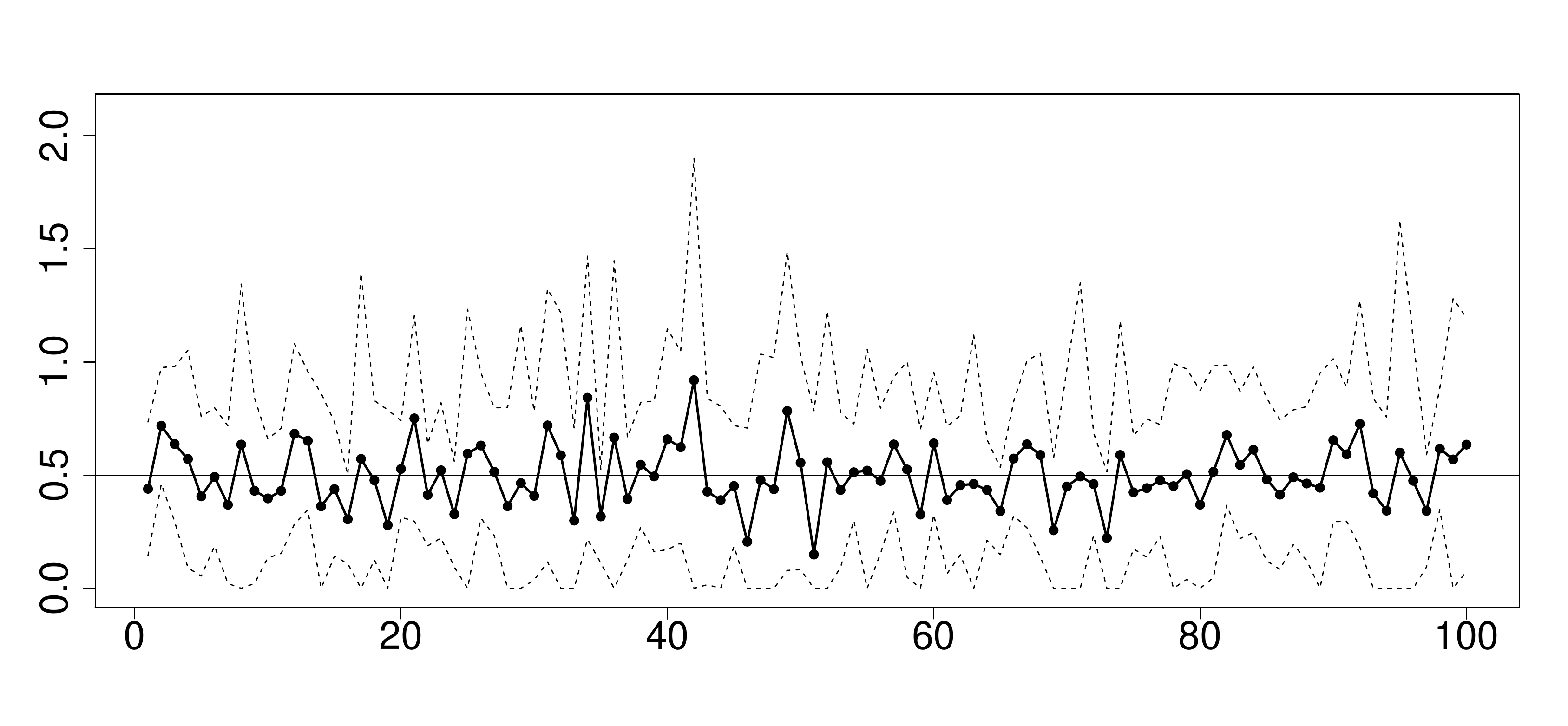}}
\subfloat[]{\includegraphics[scale=0.175]{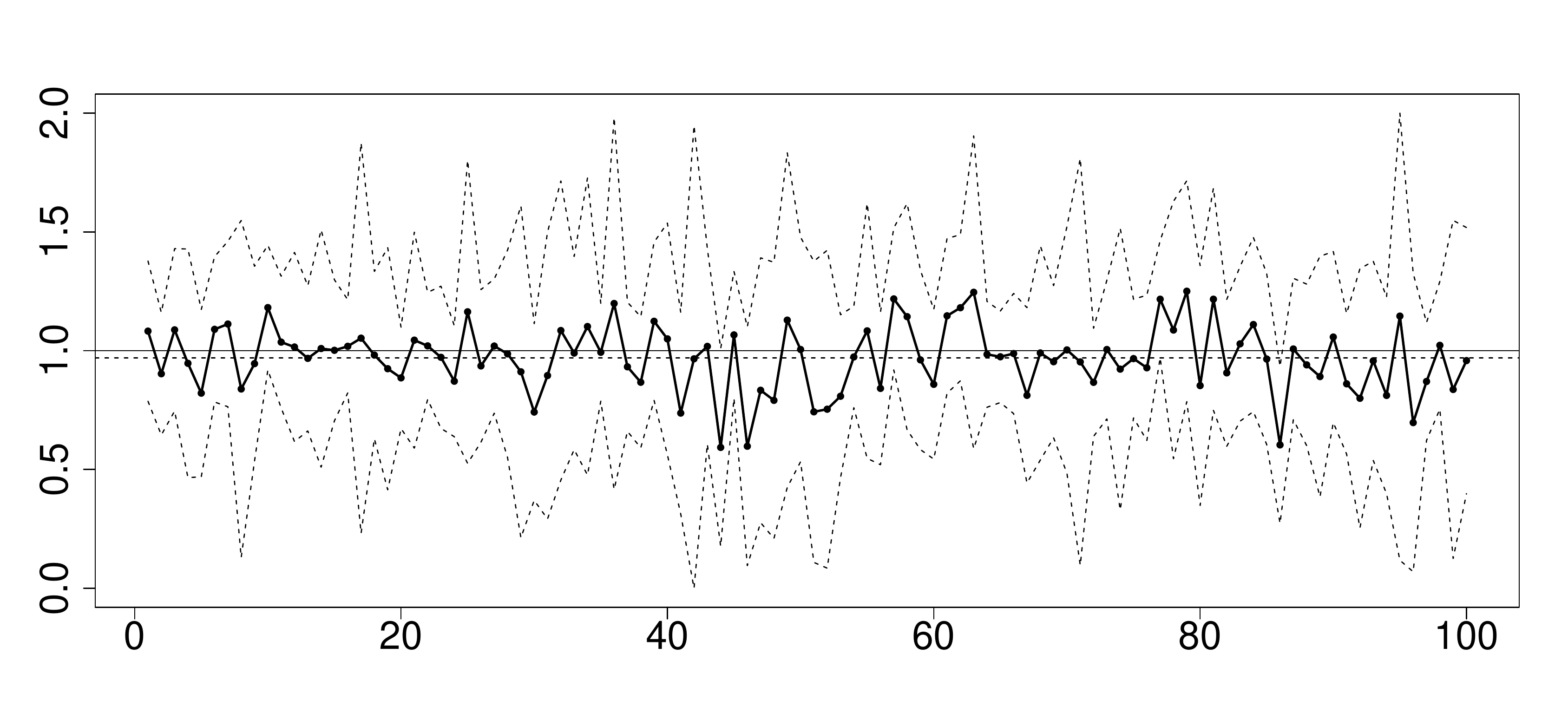}}\\[-7mm]
\subfloat[]{\includegraphics[scale=0.175]{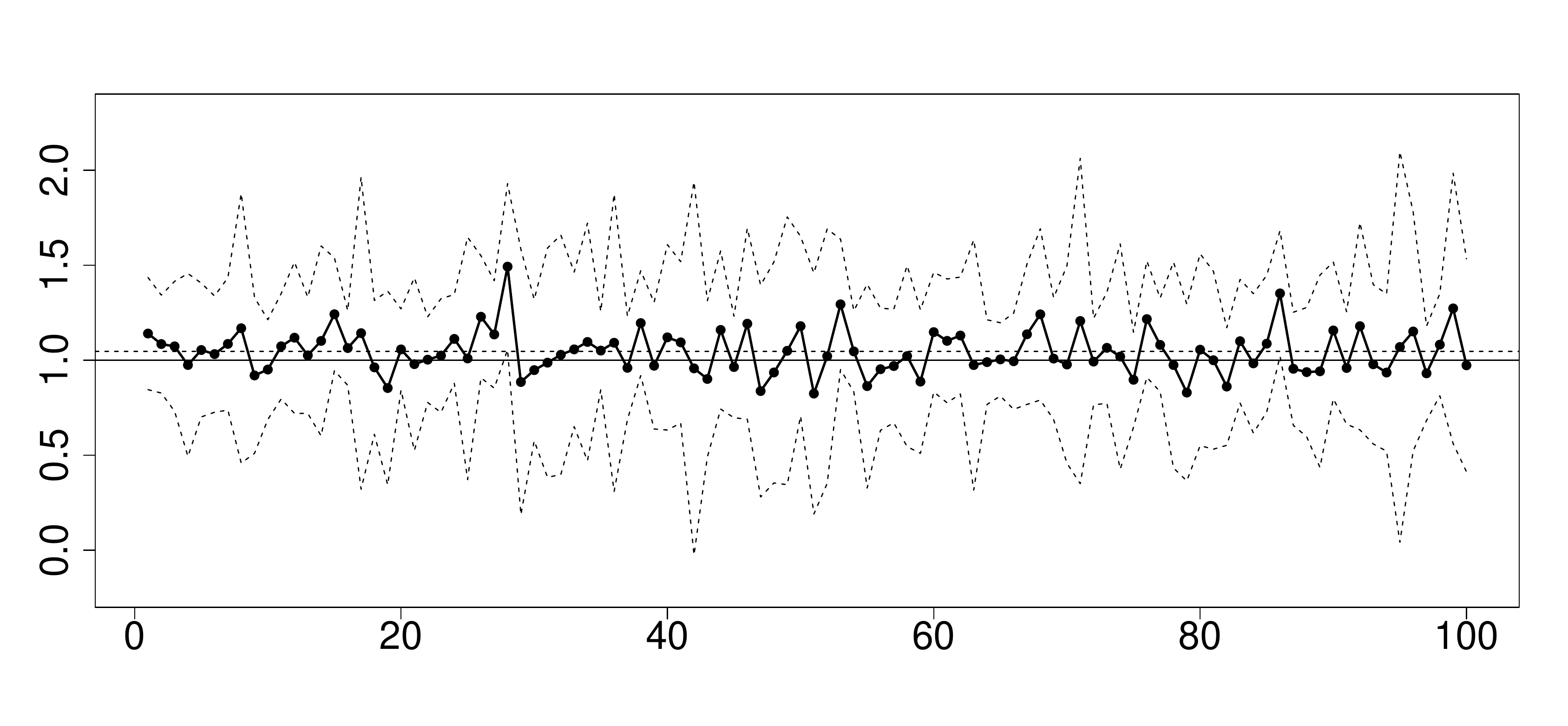}}
\subfloat[]{\includegraphics[scale=0.175]{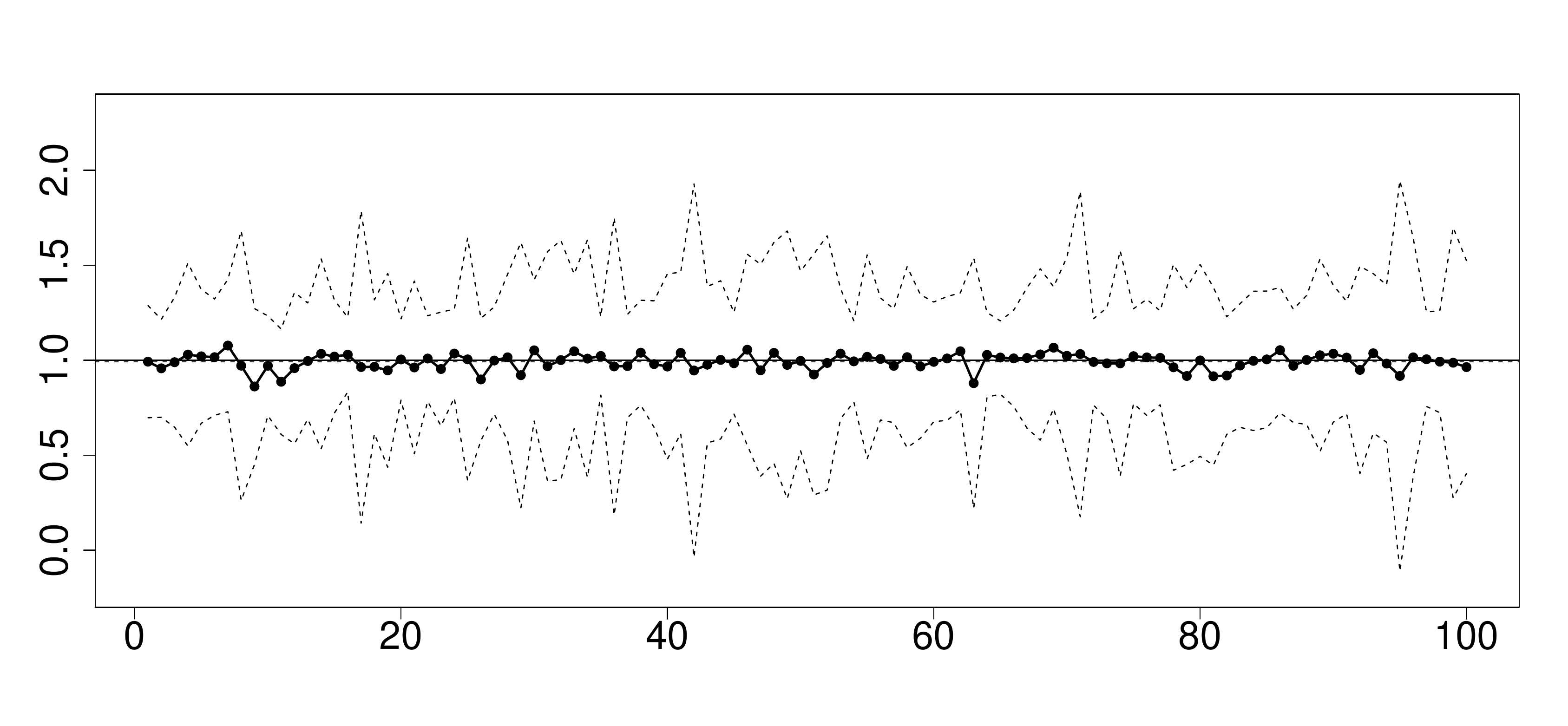}}\\[-7mm]
\caption{GLSEs of the parameters of model (iii) for 100 simulated Brown-Resnick space-time processes together with pointwise $95\%$-subsampling confidence intervals (dotted). First row: $C_1$, $\alpha_1$, second row: $C_2$, $\alpha_2$, third row: $C_3$, $\alpha_3$, fourth row: $\tau_1$, $\tau_2$.
The middle solid line is the true value and the middle dotted line represents the mean over all estimates.} \label{sim_III}
\end{figure}

\subsection*{\bf Further {insight}}
\subsection*{(a) Influence of the choice of lags}
In order to understand how the choice of lags in $\mathcal{H}$ influences computing times and the quality of the estimates, we repeat simulation scenario (i) for different sets $\mathcal{H}_{\ell}$ where $\ell=1,\ldots,5$. These are given by 
\begin{small}
\begin{align*}
\mathcal{H}_1&=\{(0,0,1),(1,0,0),(0,0,2)\}, \\
\mathcal{H}_2&=\mathcal{H}_1 \cup \{(2,0,0),(2,1,0),(1,2,0),(1,1,1),(1,3,2)\}, \\
\mathcal{H}_3&=\mathcal{H}_2 \cup \{(0,0,3),(0,0,4),(3,0,0),(4,0,0),(4,2,0),(2,4,0),(2,2,2),(2,6,4)\}, \\
\mathcal{H}_4&=\mathcal{H}_3 \cup \{(0,0,5),(0,0,6),(5,0,0),(6,0,0),(8,4,0),(4,8,0),(3,3,3),(3,9,6)\}, \\
\mathcal{H}_5&=\mathcal{H}_4  \cup \{(0,0,7),(0,0,8),(7,0,0),(8,0,0),(10,5,0),(5,10,0),(4,4,4),(4,12,8)\}. 
\end{align*} 
\end{small}
{ From Table~\ref{summary_choiceoflags} we read off roughly stable results across all choices.}
As to the computational burden inherent with the choice of lags we observe from Table~\ref{table:comptimes} that computing times increase roughly linearly with $|\mathcal{H}|$; more precisely, computing times approximately double when $|\mathcal{H}|$ doubles. 
Hence, it is advisable to choose $\mathcal{H}$ such that its cardinality is minimal across a selection of valid choices. 
\begin{center}\begin{small}
\captionsetup{type=table}
\begin{tabular}{c|c|c|c|c|c|c|c|c|c|c|c}
& \tiny{TRUE} & $M_1$ & $M_2$ & $M_3$ & $M_4$ & $M_5$ & $R_1$ & $R_2$ & $R_3$ & $R_4$ & $R_5$   \\
\hline
$\wh{C}_1$ & 0.8 & 0.776 & 0.789 & 0.798 & 0.804 & 0.810 & 0.140 & 0.179 & 0.182 & 0.184 & 0.185\\
$\wh{C}_2$ & 0.4 & 0.399 & 0.399 & 0.399 & 0.400 & 0.402 & 0.099 & 0.101 & 0.103 & 0.104 & 0.106\\
$\wh{\alpha}_1$ & 1.5 & 1.490  & 1.462 & 1.436 & 1.418 & 1.403 &  0.074 & 0.119 & 0.114 & 0.130 & 0.145\\
$\wh{\alpha}_2$ & 1 & 0.990 & 0.991 & 0.986 & 0.984 & 0.979 & 0.084 & 0.084 & 0.072 & 0.075 & 0.080
\end{tabular}\end{small}
\captionof{table}{True parameter values (first column), means $M_1-M_5$ and RMSEs $R_1-R_5$ of the estimates of the parameters of model (i) based on the different sets of lags $\mathcal{H}_1-\mathcal{H}_5$.}
\label{summary_choiceoflags}
\end{center}

\enlargethispage{3\baselineskip}
\begin{center}
\captionsetup{type=table}
\begin{tabular}{c|c|c|c|c|c}
$\ell$ & $1$ & $2$ & $3$ & $4$ & $5$ \\
\hline
Computing time in seconds & 3.2 & 8.0 & 15.3 & 21.9 & 28.1 
\end{tabular}
\captionof{table}{Average computing times for one realisation of the Brown-Resnick model (I) based on the sets of lags $\mathcal{H}_{\ell}$ for $\ell=1,\ldots,5$.}
\label{table:comptimes}
\end{center}

\subsection*{(b) Effect of the sample size}

We extend the simulation scenario (i) by repeating the procedure with an increased sample size. Since the number of spatial points is considered as fixed, this involves an increase of the number of time points. In a first run, the observation area is now given by $\cald_n=\calf \times \cali_n$ with $\calf=\{1,\ldots,15\}^2$ and $\cali_n=\{1,\ldots,500\}$; i.e., the process is observed at 500 time points (instead of 300 as before). In a second run, the time points are extended to $\cali_n=\{1,\ldots,1000\}$.
Compared to the original scenario, everything else remains unchanged; in particular, as the large quantile $q$ we choose as before the $96\%$-quantile of $\{\eta(\bs s,t): (\bs s,t) \in \cald_n\}$. 

With regard to the results summarised in Tables~\ref{summary1_500} and \ref{summary1_1000}, we notice that there is no significant change in mean; the confidence bounds (cf. Figure~\ref{sim_I}) are too wide to support such a hypothesis. However, the RMSE and the MAE (and thus the empirical standard deviation) of the estimates decrease considerably. 
This is not an unexpected behaviour: since we do not change $q$, we increase the number of observed points used for the estimation of the empirical extremogram and thus decrease its variance without introducing additional bias. 
In theory, we expect from Theorem~\ref{GLSEcons} and Remark~\ref{summary_cases} that an increase of the number of time points by a factor $k$ leads to a decrease of the standard deviation of the estimates by a factor $f_k(\beta_1)=(1/k)^{(w-\beta_1d)/2}=(1/k)^{(1-3\beta_1)/2}$ for $\beta_1 \in  (w/(5d),w/(2d)) = (1/15,1/6)$, possibly after a bias correction. The extensions from 300 to 500 and that from 300 to 1000 time points correspond to $k=5/3$ and $k=10/3$, respectively. The theoretical factors $f_k(\cdot)$ for $k=5/3$ and $k=10/3$ therefore lie in the intervals $(0.81,0.88)$ and $(0.62,0.74)$, respectively. This behaviour should be confirmed by the empirical standard deviation and related measures. Indeed, dividing the RMSE of the individual estimates of the four parameters based on 500 and 1000 time points by the RMSE based on 300 time points, we obtain factors $0.80$, 0.82, 0.89, 0.75 (mean value $0.82$) and $0.70$, 0.65, 0.70, 0.55 (mean value $0.65$), which all lie in the corresponding theoretical intervals or are close to them. Reasons for slight deviations from theory are of course { sampling variability and} the fact that in practice, the sequence $m_n$ is, as explained above, chosen as a large empirical quantile of the observations. 
Our findings are visualised in Figure~\ref{sim_I_ext}.
\begin{center}\begin{small}
\captionsetup{type=table}
\begin{tabular}{c|c|c|c|c|c}
& TRUE & MEAN & MAE & RMSE & {REL}\\
\hline
$\wh{C}_1$ & 0.8 & 0.7819 & 0.1057 & 0.1410 & { 0.1763}\\
$\wh{C}_2$ & 0.4 & 0.3938 & 0.0628 & 0.0819 & { 0.2048}\\
$\wh{\alpha}_1$ & 1.5 & 1.4549 & 0.0793 & 0.1011 & { 0.0674}\\
$\wh{\alpha}_2$ & 1 & 1.0015 & 0.0464 & 0.0613 & { 0.0613}
\end{tabular}\end{small}
\captionof{table}{True parameter values (first column) and mean, MAE, RMSE, { and REL} of the estimates of the parameters of model (i) based on an increased number of 500 time points.}
\label{summary1_500}
\end{center}

\begin{center}\begin{small}
\captionsetup{type=table}
\begin{tabular}{c|c|c|c|c|c}
& TRUE & MEAN & MAE & RMSE & {REL}\\
\hline
$\wh{C}_1$ & 0.8 & 0.7584 &  0.0995 & 0.1241 & {0.1552}\\
$\wh{C}_2$ & 0.4 & 0.3848 & 0.0522 & 0.0647 & {0.1618}\\
$\wh{\alpha}_1$ & 1.5 & 1.4504 & 0.0644 & 0.0788 & {0.0525}\\
$\wh{\alpha}_2$ & 1 & 0.9858 & 0.0348 & 0.0453 & {0.0453}
\end{tabular}\end{small}
\captionof{table}{True parameter values (first column) and mean, MAE, RMSE,  {and REL} of the estimates of the parameters of model (i) based on an increased number of 1000 time points.}
\label{summary1_1000}
\end{center}

\begin{figure}[H]
\centering
\includegraphics[scale=0.27]{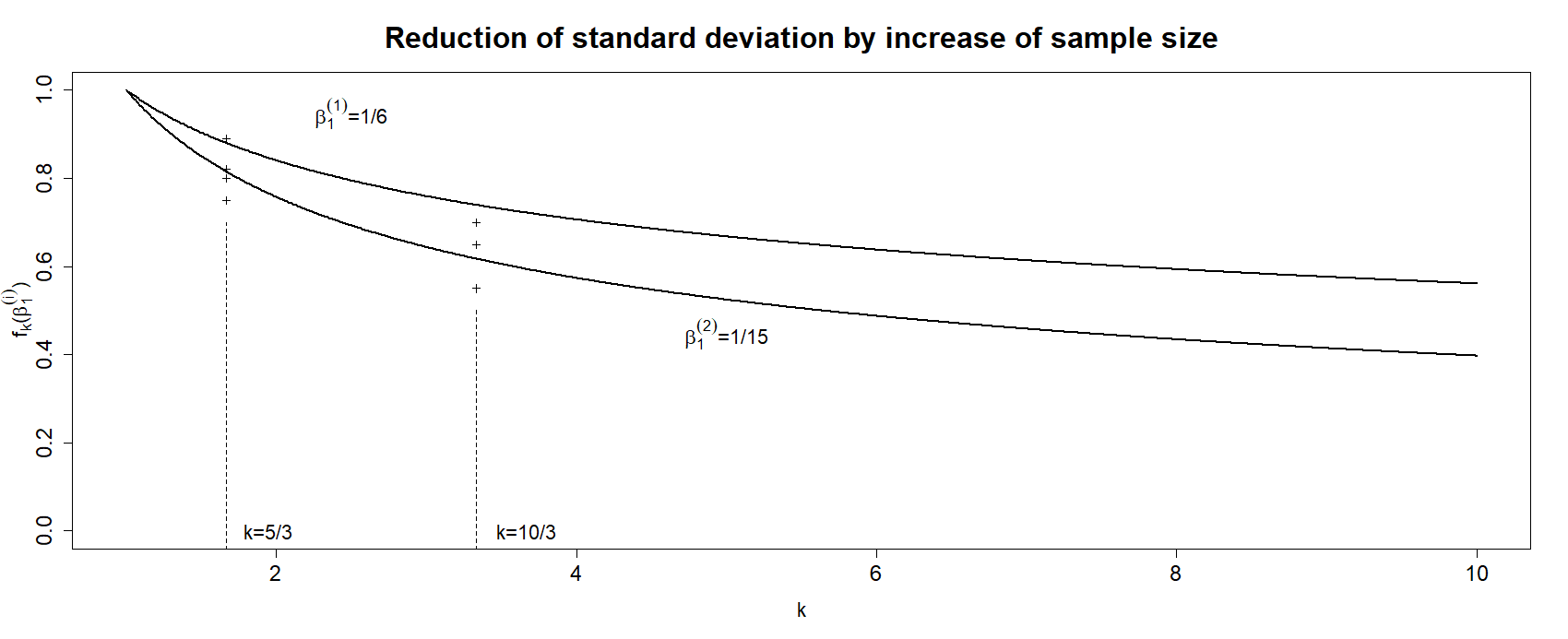}
\caption{{Theoretical minimum and maximum  factors $f_k(\beta_1^{(1)})$ and $f_k(\beta_1^{(2)})$ of decrease of the standard deviation for $\beta_1^{(1)}=1/6$ and $\beta_1^{(2)}=1/15$ (solid curves). The $+$ symbols correspond to the empirical RMSE reduction factors of the four individual paramater estimates, when the number of time points is increased by factors $k=5/3$ and $k=10/3$.}} \label{sim_I_ext}
\end{figure}
}

\subsection*{Acknowledgements}
Sven Buhl acknowledges support by the Deutsche Forschungsgemeinschaft (DFG) through the TUM  International Graduate School of Science and Engineering (IGSSE).


\newpage

\appendix

\section{Appendix}

\subsection{$\alpha$-mixing with respect to the increasing dimensions}

We need the concept of $\alpha$-mixing for the process $\{X(\bs{s}): \bs{s}\in \bbr^d\}$ with respect to $\R^w$. 
In a space-time setting with fixed spatial setting and increasing time series this is called \textit{temporal} $\alpha$-mixing.

\begin{definition}[$\alpha$-mixing and $\alpha$-mixing coefficients] \label{mixing}
Consider a strictly stationary process $\left\{X(\bs{s}): \bs{s}\in \bbr^d\right\}$ and let  $\|\cdot\|$ be some norm on $\mathbb{R}^d$.
For $\Lambda_1, \Lambda_2 \subset \mathbb{Z}^w$ define 
\begin{align*}
d(\Lambda_1,\Lambda_2) := \inf\left\{\|\bs{s}_1-\bs{s}_2\|: \ \bs{s}_1 \in \mathcal{F} \times \Lambda_1, \bs{s}_2 \in \mathcal{F} \times \Lambda_2\right\}.
\end{align*}
Further, for $i=1,2$ denote by $\sigma_{\calf\times \Lambda_i}= \sigma\left\{X(\bs{s}):  \bs{s}\in \mathcal{F} \times \Lambda_i\right\}$ the $\sigma$-algebra generated by $\{X(\bs{s}): \ \bs{s}\in \mathcal{F} \times \Lambda_i\}$.
\begin{enumerate}[(i)]
\item 
We define the {\em $\alpha$-mixing coefficients with respect to $\mathbb{R}^w$} for $k_1,k_2 \in \mathbb{N}$ and $z \geq 0$ as
\begin{equation}\label{alphaBolt}
\hspace*{-1.1cm}
\alpha_{k_1,k_2}(z) := \sup\left\{\left|\mathbb{P}(A_1\cap A_2) - \mathbb{P}(A_1)\mathbb{P}(A_2)\right|: \ A_i \in \sigma_{\calf \times\Lambda_i}, |\Lambda_i|\leq k_i,  d(\Lambda_1,\Lambda_2) \geq z\right\}.
\end{equation}
\item 
We call $\{X(\bs{s}): \bs{s}\in \bbr^d\}$  {\em $\alpha$-mixing with respect to $\mathbb{R}^w$}, if $\alpha_{k_1,k_2}(z) \to 0$ as $z\to\infty$ for all $k_1,k_2\in \mathbb{N}$.
\end{enumerate}
\end{definition}

We have to control the dependence between vector processes $\{\bs Y(\bs s)=X_{\mathcal{B}(\bs s,\ga)}: \bs s \in \Lambda_1'\}$ and $\{\bs Y(\bs s)=X_{\mathcal{B}(\bs s,\ga)}: \bs s \in \Lambda_2'\}$ for subsets $\Lambda_i' \subset \mathbb{Z}^w$ with cardinalities $|\Lambda_1'| \leq k_1$ and $|\Lambda_2'| \leq k_2.$. This entails dealing with unions of balls $\Lambda_i=\cup_{\bs s \in \mathcal{F}\times\Lambda_i'} \mathcal{B}(\bs s,\ga)$. 
Since $\ga>0$ is some predetermined finite constant independent of $n$, we keep notation simple by redefining the $\alpha$-mixing coefficients corresponding to the vector processes for $k_1,k_2 \in \mathbb{N}$ and $z \geq 0$ as
\begin{align}\label{alpha_balls}
\alpha_{k_1,k_2}(z) := & \sup\{|\mathbb{P}(A_1\cap A_2) - \mathbb{P}(A_1)\mathbb{P}(A_2)|: \nonumber\\
& \quad\quad A_i \in \sigma_{\Lambda_i},\,\,\Lambda_i=\cup_{\bs s \in \mathcal{F}\times\Lambda_i'} \mathcal{B}(\bs s,\ga),\,\, |\Lambda_i'|\leq k_i, d(\Lambda_1',\Lambda_2') \geq z\}.
\end{align}

\subsection{Proof of Theorem~\ref{stasyn}}\label{app_1}

The proof of Theorem~\ref{stasyn} is divided into two parts. 
In the first part we prove a LLN and a CLT in Lemmas~\ref{st-asy_la} and \ref{asym_normal} for the estimators $\widehat{\mu}_{\mathcal{B}(\bs 0,\ga),m_n}$ in \eqref{estmu}. In the second part of the proof we derive the CLT for the empirical extremogram $\wh\rho_{AB,m_n}$ in \eqref{EmpEst}, and compute the asymptotic covariance matrix $\Pi$. 
The proof generalizes corresponding proofs in \citet{buhl3} (where the observation area increases in all dimensions) in a non-trivial way.
We recall the separation of every point and every lag in its components corresponding to the fixed domain, indicated by the sub index $\calf$, and the remaining components, indicated by $\cali$, from Assumption~\ref{ass0}. 
In particular, we decompose $\bs h^{(i)}=(\bs h_\calf^{(i)},\bs h_{\cali}^{(i)})\in\calh$.

The separation of the observation space with its fixed domain has to be introduced into the proofs given in \cite{buhl3}, which is even in the regular grid situation highly non-trivial. 
We will give detailed references to those proofs, whenever possible, to support the understanding.
On the other hand, if arguments  just follow a previous proof line by line we avoid the details.\\

\noindent 
\textbf{Part I: } LLN and CLT for $\widehat{\mu}_{\mathcal{B}(\bs 0,\ga),m_n}$  \\[2mm]
As in \cite{buhl3}, Section~5, we make use of a large/small block argument. 
For simplicity we assume that $n^w/m_n^d$ is an integer and subdivide $\mathcal{D}_n$ into $n^w/m_n^d$ non-overlapping $d$-dimensional large blocks $\mathcal{F} \times \mathcal{B}_i$ for $i=1,\ldots,n^w/m_n^d$, where the $\mathcal{B}_i$ are $w$-dimensional cubes with side lengths $m_n^{d/w}$.
 From those large blocks we then cut off smaller blocks, which consist of the first $r_n$ elements in each of the $w$ increasing dimensions. The large blocks are then separated (by these small blocks) with at least the distance $r_n$ in all $w$ increasing dimensions and shown to be asymptotically independent. 

We divide the lags in $L_n$ into different sets according to the large and small blocks. 
Recall the notation of \eqref{lag_fix_2} and around.
Observe that a lag $(\bs \ell_{\mathcal{F}},\bs \ell_{\mathcal{I}})$ with $\bs \ell_{\mathcal{I}}=(\ell_{\mathcal{I}}^{(1)},\ldots,\ell_{\mathcal{I}}^{(w)})$ appears in $L_{\mathcal{F}}^{(i,i)} \times L_n$ exactly $\textnormal{N}_{\mathcal{F}}^{(i,i)}(\bs \ell_{\mathcal{F}})\prod_{j=1}^w(n-|\ell_{\mathcal{I}}^{(j)}|)$ times, where $\textnormal{N}_{\mathcal{F}}^{(i,i)}(\bs \ell_{\mathcal{F}})$ is defined in~\eqref{lag_appearance_fix}.
This term will replace $\prod_{j=1}^d (n-|h_j|)$ in the proofs of \cite{buhl3}.

\begin{lemma} \label{st-asy_la}
Let $\{X(\bs s): \bs s \in \mathbb{R}^d\}$ be a strictly stationary regularly varying process observed on $\cald_n=\mathcal{F} \times \mathcal{I}_n$ as in \eqref{observed}. 
For $i \in \{1,\ldots,p\}$, let $\bs h^{(i)}=(\bs h_{\mathcal{F}}^{(i)},\bs h_{\mathcal{I}}^{(i)}) \in \mathcal{H}\subseteq \mathcal{B}(\bs 0, \ga)$ for some $\ga>0$ be a fixed lag vector and use as before the convention that $(\bs h_{\mathcal{F}}^{(p+1)},\bs h_{\mathcal{I}}^{(p+1)})=\bs 0$.
Suppose that the following  mixing conditions are satisfied.
\begin{enumerate}[(1)]
\item 
$\{X(\bs{s}): \bs{s}\in\bbr^d\}$ is $\alpha$-mixing with respect to $\R^w$ with mixing coefficients $\alpha_{k_1,k_2}(\cdot)$ defined in \eqref{alphaBolt}.
\item
There exist sequences $m:=m_n, r:=r_\nto$ with $m_n^d/n^w \to 0$ and $r_n^w/m_n^d \to 0$ as $\nto$ such that (M3) and (M4i) hold.
\end{enumerate}
Then for every fixed $i=1,\ldots,p+1$, as $\nto$,
\begin{align}
\mathbb{E}\big[\widehat{\mu}_{\mathcal{B}(\bs 0,\ga),m_n}(D_i)\big] & \to \mu_{\mathcal{B}(\bs 0,\ga)}(D_i),\label{consist1}\\
\var\big[\widehat{\mu}_{\mathcal{B}(\bs 0,\gamma),m_n}(D_i)\big] 
&\sim 
\frac{m_n^d}{n^w}\sigma_{\mathcal{B}(\bs 0,\gamma)}^2(D_i),\label{varpm2}
\end{align}
with $\sigma_{\mathcal{B}(\bs 0,\gamma)}^2(D_i)$ specified in \eqref{varpm3}.
If $\mu_{\mathcal{B}(\bs 0,\ga)}(D_i)=0$, then \eqref{varpm2} is interpreted as \linebreak $\var\big[\widehat{\mu}_{\mathcal{B}(\bs 0,\gamma),m_n}(D_i)\big] = o(m_n^d/n^w)$.
In particular, 
\begin{align}\label{lln}
\widehat{\mu}_{\mathcal{B}(\bs 0,\ga),m_n}(D_i) \stp \mu_{\mathcal{B}(\bs 0,\ga)}(D_i), \quad \nto .
\end{align}
\end{lemma}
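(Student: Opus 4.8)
The plan is to establish the three assertions \eqref{consist1}, \eqref{varpm2} and \eqref{lln} in that order, since the convergence in probability is a Chebyshev consequence of the first two once the variance is shown to vanish. First I would treat the expectation \eqref{consist1}. By strict stationarity every summand in the definition \eqref{estmu} has the same mean, so the double average collapses and $\mathbb{E}[\widehat{\mu}_{\mathcal{B}(\bs 0,\ga),m_n}(D_i)] = m_n^d\, \mathbb{P}(\bs Y(\bs 0)/a_m \in D_i)$. Since $a_m=a_{m_n}$ is the regular-variation normalisation and $m_n \to \infty$, the defining limit \eqref{B6} with $n$ replaced by $m_n$ gives $m_n^d\, \mathbb{P}(\bs Y(\bs 0)/a_{m_n} \in D_i) \to \mu_{\mathcal{B}(\bs 0,\ga)}(D_i)$, which is \eqref{consist1}.

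The substantial work lies in the variance \eqref{varpm2}, for which the exact decomposition \eqref{varpm0} is the starting point. After factoring out $m_n^{2d}/(n^{2w}|\mathcal{F}(\bs h_{\mathcal{F}}^{(i)})|^2)$, the variance of the double sum splits into a diagonal term $|\mathcal{F}(\bs h_{\mathcal{F}}^{(i)})|\, n^w\, \var[\mathbbmss{1}_{\{\bs Y(\bs 0)/a_m \in D_i\}}]$ and a covariance sum, which I would reorganise by the lags $(\bs\ell_{\mathcal{F}},\bs\ell_{\mathcal{I}})$ using stationarity together with the counting identity \eqref{lag_appearance_fix}. Scaling each contribution by $m_n^d$, the diagonal part converges to $\mu_{\mathcal{B}(\bs 0,\ga)}(D_i)$ by \eqref{B6} (the subtracted square being $O(m_n^d \mathbb{P}^2)=o(1)$), while for each fixed lag $m_n^d\,\cov[\mathbbmss{1}_{\{\bs Y(\bs 0)/a_m \in D_i\}},\mathbbmss{1}_{\{\bs Y(\bs\ell)/a_m \in D_i\}}] \to \tau_{\mathcal{B}(\bs 0,\ga)\times\mathcal{B}(\bs\ell,\ga)}(D_i\times D_i)$ by \eqref{B7}, and the weight $\prod_{j=1}^w(n-|\ell_{\mathcal{I}}^{(j)}|)/n^w \to 1$. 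Collecting the surviving factors produces exactly the limit \eqref{varpm3}, namely $(m_n^d/n^w)\sigma^2_{\mathcal{B}(\bs 0,\ga)}(D_i)$.

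The main obstacle is justifying the interchange of the limit $n\to\infty$ with the infinite sum over $\bs\ell_{\mathcal{I}}\in\mathbb{Z}^w$, i.e. that the pointwise limits above may be summed termwise. This is precisely where the mixing hypotheses enter, and I would split the lag sum at a threshold $\|\bs\ell_{\mathcal{I}}\|=k$. The anti-clustering condition (M3) controls the moderate lags $k<\|\bs\ell_{\mathcal{I}}\|\le r_n$ by forcing the corresponding $\limsup_n$ of the scaled joint probabilities to vanish as $k\to\infty$, while the summable mixing bound (M4i), $m_n^d\sum_{\|\bs\ell\|>r_n}\alpha_{1,1}(\|\bs\ell\|)\to 0$, dominates the far lags; this is the step at which the large/small block decomposition of $\mathcal{D}_n$ renders widely separated blocks asymptotically independent, exactly as in Section~5 of \cite{buhl3}, but now with $\mathcal{F}$ held fixed so that $|\mathcal{F}(\bs h_{\mathcal{F}}^{(i)})|$ and $\textnormal{N}_{\mathcal{F}}^{(i,i)}$ persist in the limit rather than growing like $n^q$. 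A dominated-convergence argument then legitimises the summation and yields \eqref{varpm3}.

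Finally, \eqref{lln} follows at once: by \eqref{consist1} and \eqref{varpm2} the estimator has mean tending to $\mu_{\mathcal{B}(\bs 0,\ga)}(D_i)$ and variance of order $m_n^d/n^w\to 0$ (since $m_n^d=o(n^w)$), so Chebyshev's inequality gives convergence in probability. The degenerate case $\mu_{\mathcal{B}(\bs 0,\ga)}(D_i)=0$ is absorbed into the same bounds, which then show that the entire bracket in \eqref{varpm3} is $o(1)$, so that $\var[\widehat{\mu}_{\mathcal{B}(\bs 0,\ga),m_n}(D_i)]=o(m_n^d/n^w)$ and the Chebyshev conclusion still applies.
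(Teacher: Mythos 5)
Your proposal is correct and follows essentially the same route as the paper: the expectation collapses by stationarity and \eqref{B6}, the variance is computed from \eqref{varpm0} by reorganising the covariance sum over lags and splitting it at thresholds $k$ and $r_n$ (dominated convergence via \eqref{B6}--\eqref{B7} for the bounded lags, condition (M3) together with $r_n^w/m_n^d\to 0$ for the range $k<\|\bs\ell_{\cali}\|\le r_n$, and (M4i) for $\|\bs\ell_{\cali}\|>r_n$), and \eqref{lln} then follows by Chebyshev. The only cosmetic slip is attributing the lag splitting to the large/small block construction, which the paper reserves for the CLT; here it is simply a three-way partition of the lag sum.
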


\bproof[Proof of Lemma~\ref{st-asy_la}.]
We suppress the superscript $(i)$ of $\bs h^{(i)}$ (respectively $\bs h_{\mathcal{F}}^{(i)}$) for notational ease. 
Strict stationarity and relation \eqref{B6} imply that 
\begin{align*}
\mathbb{E}\Big[\widehat{\mu}_{\mathcal{B}(\bs 0,\ga),m_n}(D_i)\Big]
=\frac{m_n^d}{n^w} \sum_{\bs i \in \mathcal{I}_n} \frac{|\mathcal{F}(\bs h_{\mathcal{F}})|}{|\mathcal{F}(\bs h_{\mathcal{F}})|}  \mathbb{P}\Big(\frac{\bs Y(\bs 0)}{a_m} \in D_i\Big) 
= m_n^d\mathbb{P}\Big(\frac{\bs Y(\bs 0)}{a_m} \in D_i\Big) \to \mu_{\mathcal{B}(\bs 0,\ga)}(D_i).
\end{align*}
As to the asymptotic variance, we start from \eqref{varpm0}, where it has been calculated that
\begin{align}
\var\big[\widehat{\mu}_{\mathcal{B}(\bs 0,\ga),m_n}(D_i)\big] 
&=\frac{m_n^{2d}}{n^{2w}|\mathcal{F}(\bs h_{\mathcal{F}})|^2} \Big(|\mathcal{F}(\bs h_{\mathcal{F}})| n^w\var\big[\mathbbmss{1}_{\{\frac{\bs Y(\bs 0)}{a_m} \in D_i\}}\big] \notag\\
&\quad\quad\quad 
+\sum_{\bs f, \bs f' \in \mathcal{F}(\bs h_{\mathcal{F}})}\sum_{\bs i, \bs i' \in \mathcal{I}_n \atop (\bs f,\bs i) \neq (\bs f',\bs i')} 
\cov\big[\mathbbmss{1}_{\{\frac{\bs Y(\bs f, \bs i)}{a_m} \in D_i\}},\mathbbmss{1}_{\{\frac{\bs Y(\bs f', \bs i')}{a_m} \in D_i\}}\big]\Big) \notag\\
&=: A_1+A_2. \label{varpm}
\end{align}
By \eqref{B6} and since $\mathbb{P}(\bs Y(\bs 0)/a_m \in D_i) \to 0$,
\begin{align*}
A_1&= \frac{m_n^{2d}}{n^w |\mathcal{F}(\bs h_{\mathcal{F}})|}\mathbb{P}\Big(\frac{\bs Y(\bs 0)}{a_m} \in D_i\Big) \Big(1-\mathbb{P}\Big(\frac{\bs Y(\bs 0)}{a_m} \in D_i\Big)\Big) \sim \frac{m_n^d}{n^w |\mathcal{F}(\bs h_{\mathcal{F}})|} \mu_{\mathcal{B}(\bs 0,\ga)}(D_i) {\to 0},\quad \nto.
\end{align*} 
Counting the lags as explained above this proof, for fixed $k \in\N$ we have by stationarity the analogy of (5.6) in \cite{buhl3}
\beam
\frac{n^w}{m_n^d} A_2
&=&\frac{m_n^d}{|\mathcal{F}(\bs h_{\mathcal{F}})|^2} \Big(\sum_{\bs \ell_{\mathcal{I}} \in L_n \atop 0 \leq \|\bs \ell_{\mathcal{I}}\| \leq k} + \sum_{\bs \ell_{\mathcal{I}} \in L_n \atop k <  \|\bs \ell_{\mathcal{I}}\| \leq r_n} + \sum_{\bs \ell_{\mathcal{I}} \in L_n \atop \|\bs \ell_{\mathcal{I}}\| > r_n} \Big) \nonumber\\ & & \sum_{\bs \ell_{\mathcal{F}} \in L_{\mathcal{F}}^{(i,i)}  \atop (\bs \ell_{\mathcal{F}}, \bs \ell_{\mathcal{I}}) \neq \bs 0} \textnormal{N}_{\mathcal{F}}^{(i,i)}(\bs \ell_{\mathcal{F}})\prod_{j=1}^w\big(1-\frac{|\ell_{\mathcal{I}}^{(j)}|}{n}\big) \cov[\mathbbmss{1}_{\{\frac{\bs Y(\bs 0)}{a_m} \in D_i\}},\mathbbmss{1}_{\{\frac{\bs Y(\bs \ell_{\mathcal{F}},\bs \ell_{\mathcal{I}})}{a_m} \in D_i\}}] \nonumber \\
&=:& A_{21}+A_{22}+A_{23}.\label{cov}
\eeam
Concerning $A_{21}$ we have, 
\begin{align*}
A_{21} =&\frac{m_n^d}{|\mathcal{F}(\bs h_{\mathcal{F}})|^2} \sum_{\bs \ell_{\mathcal{I}} \in L_n \atop 0 \leq \|\bs \ell_{\mathcal{I}}\| \leq k} \sum_{\bs \ell_{\mathcal{F}} \in L_{\mathcal{F}}^{(i,i)}  \atop (\bs \ell_{\mathcal{F}}, \bs \ell_{\mathcal{I}}) \neq \bs 0} \textnormal{N}_{\mathcal{F}}^{(i,i)}(\bs \ell_{\mathcal{F}})\prod_{j=1}^w\big(1-\frac{|\ell_{\mathcal{I}}^{(j)}|}{n}\big)\\ 
& \Big[\mathbb{P}\Big(\frac{\bs Y(\bs 0)}{a_m} \in D_i,\frac{\bs Y(\bs \ell_{\mathcal{F}},\bs \ell_{\mathcal{I}})}{a_m} \in D_i\Big)-\mathbb{P}\Big(\frac{\bs Y(\bs 0)}{a_m} \in D_i\Big)^2\Big].
\end{align*}
With \eqref{B6} and \eqref{B7} we obtain by dominated convergence, 
\begin{align}\label{dom}
\lim_{\kto } \limsup_{n \to \infty} A_{21} =\frac{1}{|\mathcal{F}(\bs h_{\mathcal{F}})|^2}\sum_{\bs \ell_{\mathcal{I}} \in \mathbb{Z}^w} \sum_{\bs \ell_{\mathcal{F}} \in L_{\mathcal{F}}^{(i,i)} \atop (\bs \ell_{\mathcal{F}},\bs \ell_{\mathcal{I}}) \neq \bs 0} \textnormal{N}_{\mathcal{F}}^{(i,i)}(\bs \ell_{\mathcal{F}}) \tau_{\mathcal{B}(\bs{0},\ga)\times \mathcal{B}((\bs \ell_{\mathcal{F}},\bs \ell_{\mathcal{I}}),\ga)}(D_i \times D_i).
\end{align}
As to $A_{22}$, observe that for all $n \geq 0$ we have $\prod\limits_{j=1}^w (1-\frac{|\ell_{\mathcal{I}}^{(j)}|}{n})\leq 1$ for $\bs \ell_{\mathcal{I}}\in L_n$. 
Furthermore, since $D_i$ is bounded away from $\bs 0$, there exists $\epsilon>0$ such that $D_i \subset \{\bs x \in \ov{\mathbb{R}}^{|\mathcal{B}(\bs 0,\ga)|}: \| \bs x \| > \epsilon\}.$
Hence, we obtain 
\begin{align*}
|A_{22}| 
\leq & \frac{1}{|\mathcal{F}(\bs h_{\mathcal{F}})|^2} \sum_{\bs \ell_{\mathcal{F}} \in L_{\mathcal{F}}^{(i,i)}} \textnormal{N}_{\mathcal{F}}^{(i,i)}(\bs \ell_{\mathcal{F}}) \sum_{\bs \ell_{\mathcal{I}}\in \mathbb{Z}^w \atop k < \Vert{\bs \ell_{\mathcal{I}}}\Vert \leq r_n} \Big\{
m_n^d  \mathbb{P}\Big(\|\bs Y(\bs 0)\|>\epsilon a_m, \|\bs Y(\bs \ell_{\mathcal{F}},\bs \ell_{\mathcal{I}})\|>\epsilon a_m\Big) \\
& + \frac1{m_n^d} \Big(m_n^d \mathbb{P}\Big(\frac{\bs Y(\bs 0)}{a_m} \in D_i\Big)\Big)^2\Big\}.
\end{align*}
which differs from the corresponding expression in \cite{buhl3} only by finite factors.
Thus by an obvious modification of the arguments in that paper it follows that, using $r_n^w/m_n^d \to 0$ and condition~(M3),
$$\lim_{\kto }\limsup_{\nto } A_{22}=0.$$ 
Using the definition~\eqref{alpha_balls} of $\alpha$-mixing for $A_1=\{\bs Y(\bs 0)/a_m \in D_i\}$ and $A_2=\{\bs Y(\bs \ell_{\mathcal{F}}, \bs \ell_{\mathcal{I}})/a_m \in D_i\}$, we obtain by (M4i),
\begin{align}\label{a23}
|A_{23}| 
& \leq \frac{1}{|\mathcal{F}(\bs h_{\mathcal{F}})|^2} \sum_{\bs \ell_{\mathcal{F}} \in L_{\mathcal{F}}^{(i,i)}} \textnormal{N}_{\mathcal{F}}^{(i,i)}(\bs \ell_{\mathcal{F}}) m_n^d \sum_{\bs \ell_{\mathcal{I}} \in \mathbb{Z}^w: \|\bs \ell_{\mathcal{I}}\| > r_n} \alpha_{1,1}(\|\bs \ell_{\mathcal{I}}\|) \to 0,\quad n \to \infty.
\end{align}
Summarising these computations, we conclude from \eqref{cov} and \eqref{dom} that for $\nto$,
$$A_2 \sim \frac{m_n^d}{n^w} \sum_{\bs \ell_{\mathcal{I}} \in \mathbb{Z}^w} \frac{1}{|\mathcal{F}(\bs h_{\mathcal{F}})|^2} \sum_{\bs \ell_{\mathcal{F}} \in L_{\mathcal{F}}^{(i,i)} \atop (\bs \ell_{\mathcal{F}}, \bs \ell_{\mathcal{I}}) \neq \bs 0} \textnormal{N}_{\mathcal{F}}^{(i,i)}(\bs \ell_{\mathcal{F}}) \tau_{\mathcal{B}(\bs 0,\gamma) \times \mathcal{B}((\bs \ell_{\mathcal{F}},\bs \ell_{\mathcal{I}}),\gamma)}(D_i \times D_i),$$
 and, therefore, \eqref{varpm} implies \eqref{varpm2}.
Since ${m_n^d/n^w\to 0}$ as $\nto$, equations~\eqref{consist1} and \eqref{varpm2} imply \eqref{lln}. 
\eproof

\ble\label{asym_normal}
Let $\{X(\bs s): \bs s \in \mathbb{R}^d\}$ be a strictly stationary regularly varying  process observed on $\mathcal{D}_n=\mathcal{F} \times \mathcal{I}_n.$ 
For $i \in \{1,\ldots,p\}$, let $\bs h^{(i)}=(\bs h_{\mathcal{F}}^{(i)},\bs h_{\mathcal{I}}^{(i)}) \in \mathcal{H}\subseteq \mathcal{B}(\bs 0, \ga)$ for some $\ga>0$ be a fixed lag vector and take as before the convention that $(\bs h_{\mathcal{F}}^{(p+1)},\bs h_{\mathcal{I}}^{(p+1)})=\bs 0$.
Let the assumptions of Theorem~\ref{stasyn} hold.
Then for every fixed $i=1,\ldots,p+1$,
\begin{align}\label{cltpm}
\wh S_{\mathcal{B}(\bs 0,\gamma),m_n} & :=\sqrt{\frac{m_n^d}{n^w}} \sum_{\bs i \in \mathcal{I}_n} 
\Big[\frac{1}{|\mathcal{F}(\bs h_{\mathcal{F}})|} \Big( \sum_{\bs f \in \mathcal{F}(\bs h_{\mathcal{F}})} \mathbbmss{1}_{\{\frac{\bs Y(\bs f, \bs i)}{a_m} \in D_i\}} \Big)-\mathbb{P}\Big(\frac{\bs Y(\bs f, \bs i)}{a_m} \in D_i\Big)\Big]\nonumber\\
&=\sqrt{\frac{n^w}{m_n^d}} \big[\widehat{\mu}_{\mathcal{B}(\bs 0,\ga),m_n}(D_i)-\mu_{\mathcal{B}(\bs 0,\ga),m_n}(D_i)\big] \std \mathcal{N}(0,\sigma_{\mathcal{B}(\bs 0,\gamma)}^2(D_i)), \quad \nto ,
\end{align}
with $\widehat{\mu}_{\mathcal{B}(\bs 0,\ga),m_n}(D_i)$ as in \eqref{estmu}, $\mu_{\mathcal{B}(\bs 0,\ga),m_n}(D_i)):=m_n^d \mathbb{P}(\bs Y(\bs 0)/a_m \in D_i)$ and $\sigma_{\mathcal{B}(\bs 0,\gamma)}^2(D_i)$ given in~\eqref{varpm3}.
\ele

\bproof
Again we suppress the superscript $(i)$ of $\bs h^{(i)}$ and $\bs h_{\mathcal{F}}^{(i)}$. 
As for the proof of consistency above, we generalise the proof of the CLT in \cite{buhl3} (based on \citet{Bolthausen}) to the new setting. 
We consider the process 
$$\Big\{\frac{\sqrt{m_n^d}}{|\mathcal{F}(\bs h_{\mathcal{F}})|} \Big( \sum_{\bs f \in \mathcal{F}(\bs h_{\mathcal{F}})} \mathbbmss{1}_{\{\frac{\bs Y(\bs f, \bs i)}{a_m} \in D_i\}} \Big): \bs i \in \mathbb{Z}^w\Big\},$$
observed on the $w$-dimensional regular grid $\mathcal{I}_n$.
In analogy to (5.11) in \cite{buhl3} define 
\beam\label{I}
I(\bs i):=\frac{1}{|\mathcal{F}(\bs h_{\mathcal{F}})|} \Big( \sum_{\bs f \in \mathcal{F}(\bs h_{\mathcal{F}})} \mathbbmss{1}_{\{\frac{\bs Y(\bs f, \bs i)}{a_m} \in D_i\}}\Big)-\mathbb{P}\Big(\frac{\bs Y(\bs 0)}{a_m} \in D_i\Big),\quad \bs i \in \mathcal{I}_n,
\eeam
and note that by stationarity,
\begin{align}\label{smpm}
\wh S_{\mathcal{B}(\bs 0,\gamma),m_n} 
&=\sqrt{\frac{m_n^d}{n^w}} \sum_{\bs i\in \mathcal{I}_n } I(\bs i).
\end{align}
The boundary condition required in Eq.~(1) in \citet{Bolthausen} is satisfied for the regular grid $\cali_n$. By the same arguments as in \cite{buhl3},
\begin{align}\label{covfinite}
0 < \sigma^2_{\mathcal{B}(\bs 0,\ga)}(D_i) \sim \var[\wh S_{\mathcal{B}(\bs 0,\gamma),m_n}] \le \frac{m_n^d}{n^w} \sum_{\bs i,\bs i' \in \mathbb{Z}^w} |\mathbb{E}[I(\bs i)I(\bs i')]| < \infty, 
\end{align}
such that $\sum_{\bs i,\bs i' \in \mathbb{Z}^w} \cov[I(\bs i), I(\bs i')]>0$.
Replacing $\cals_n$ in \cite{buhl3} by $\mathcal{I}_n$ and $n^d$ by $n^w$, we define
\beam\label{ssm}
 v_n:=\frac{m_n^d}{n^w} \ \sum_{\bs i, \bs i' \in \mathcal{I}_n \atop \|\bs i-\bs i'\| \leq {r_n}}  \mathbb{E}\Big[ I(\bs i)  I(\bs i')\Big].
\eeam
and obtain by the same arguments that
\begin{align*}
\frac{v_n}{\var{[\wh S_{\mathcal{B}(\bs 0,\gamma),m_n}]}}
&=1-\frac{m_n^d}{n^w} \frac1{\si^2_{\mathcal{B}(\bs 0,\gamma)}(D_i)}
\sum\limits_{\bs i,\bs i'\in \mathcal{I}_n \atop \|\bs i- \bs i'\| > r_n}  \E[I(\bs i) I(\bs i')](1+o(1)).
\end{align*}
Now note that 
\begin{align*}
\frac{m_n^d}{n^w} 
\sum_{\bs i, \bs i'\in \mathcal{I}_n \atop \|\bs i- \bs i'\| > r_n} \E[I(\bs i) I(\bs i')] 
\leq &\frac{1}{|\mathcal{F}(\bs h_{\mathcal{F}})|^2}   \sum_{\bs \ell_{\mathcal{F}} \in L_{\mathcal{F}}^{(i,i)}} \textnormal{N}_{\mathcal{F}}^{(i,i)}(\bs \ell_{\mathcal{F}})m_n^d\sum_{\bs \ell_{\mathcal{I}} \in \mathbb{Z}^q: \|\bs \ell_{\mathcal{I}}\| > r_n}  \alpha_{1,1}(\| \bs \ell_{\mathcal{I}} \|) \rightarrow 0, \quad \nto,
\end{align*}
as in \eqref{a23}, with mixing coefficients defined in~\eqref{alpha_balls}.
Therefore, 
\begin{align} \label{preasyvar}
v_n \sim \var[\wh S_{\mathcal{B}(\bs 0,\gamma),m_n}] \rightarrow \si^2_{\mathcal{B}(\bs 0,\gamma)}(D_i), \quad \nto.
\end{align}
The standardized quantities are again as in \cite{buhl3}, with $\cals_n$ replaced by $\cali_n$ and $n^d$ by $n^w$, by
$$\overline{S}_{n}:=v_n^{-1/2}\wh S_{\mathcal{B}(\bs 0,\gamma),m_n}=  v_n^{-1/2}\sqrt{\frac{m_n^d}{n^w}} \sum_{\bs i\in \mathcal{I}_n } I(\bs i)\quad\mbox{and}\quad\overline{S}_{\bs i,n}:=v_n^{-1/2}\sqrt{\frac{m_n^d}{n^w}}  \sum_{\bs i' \in \mathcal{I}_n \atop \|\bs i- \bs i'\| \leq r_n} I(\bs i').$$
The proof continues in \cite{buhl3}, with $n^d$ replaced by $n^w$, by estimating the quantities $B_1$, $B_2$ and $B_3$.
The estimation of $B_1$ follows the same lines of the proof, resulting in
\beao
E[|B_1|^2]=\lambda^2 v_n^{-2} \Big(\frac{m_n^d}{n^w}\Big)^{2} \sum_{  \|\bs i-\bs i'\| \leq { r_n}} \sum_{  \|\bs j- \bs j'\| \leq {r_n}} \cov\big[I(\bs i)I(\bs i'),I(\bs j)I(\bs j') \big].
\eeao
We use definition~\eqref{alpha_balls} of the $\alpha$-mixing coefficients for
$$\Lambda_1'=\{\bs i,\bs i'\} \quad\mbox{and}\quad \Lambda_2'=\{\bs j,\bs j'\},$$
then  $|\Lambda_1'|,|\Lambda_2'| \leq 2$ and
for $d(\Lambda_1',\Lambda_2')$ we consider the following two cases:
\begin{enumerate}[(1)]
\item 
$\|\bs i-\bs j\| \geq 3 r_n.$ Then $2r_n \leq (2/3) \|\bs i-\bs j\|$ and  $d(\Lambda_1',\Lambda_2')\ge \|\bs i- \bs j\|-2 r_n$.
Since indicator variables are bounded and $\alpha_{2,2}$ is a decreasing function, 
\begin{align*}
|\cov\big[I(\bs i)I(\bs i'),I(\bs j)I(\bs j') \big]| &\leq 4 \alpha_{2,2}\Big(\|\bs i-\bs j\|-2r_n\Big)
\leq 4 \alpha_{2,2}\Big(\frac1{3} \|\bs i-\bs j\|\Big).
\end{align*}
\item 
$\|\bs i-\bs j\| {< 3 r_n.}$ 
Set 
$z:=\min\{\|\bs i- \bs j\|, \|\bs i- \bs j'\|,\|\bs i'- \bs j\|, \|\bs i'- \bs j'\|\}$,
then $d(\Lambda_1',\Lambda_2')\ge z$ and, hence, 
$$\cov\big[I(\bs i)I(\bs i'),I(\bs j)I(\bs j') \big] \leq 4 \alpha_{k_1,k_2}(z), \quad 2 \leq k_1+k_2 \leq 4.$$ 
\end{enumerate}
Therefore,
\begin{align*}
E[|B_1|^2] 
\leq & \frac{4\lambda^2}{v_n^{2}} \Big(\frac{m_n^d}{n^w}\Big)^2  \Big[\sum_{  \|\bs i- \bs j\| {\geq 3 r_n}} 
 \sum_{{ \|\bs i- \bs i'\| 
 \leq { r_n}} \atop \|\bs j- \bs j'\| 
 \leq { r_n} } \alpha_{2,2}\Big(\frac1{3} \|\bs i- \bs j\|\Big)
+\sum_{\|\bs i- \bs j\| {< 3 r_n}} \sum_{{ \|\bs i- \bs i'\| \leq { r_n}} \atop 
\|\bs j- \bs j'\| \leq {r_n} } \alpha_{k_1,k_2}(z)\Big]\\
\leq & \frac{4\lambda^2}{v_n^2} \Big(\frac{m_n^d}{n^w}\Big)^2  n^w r_n^{2w} \Big[\sum_{\bs \ell_{\mathcal{I}}\in \mathbb{Z}^{w}: \|\bs \ell_{\mathcal{I}}\| {\geq 3 r_n}} 
 \alpha_{2,2}\Big(\frac1{3} \|\bs \ell_{\mathcal{I}}\|\Big)+\sum_{\bs \ell_{\mathcal{I}}\in \mathbb{Z}^{w}: \|\bs \ell_{\mathcal{I}}\| {< 3 r_n}}  \alpha_{k_1,k_2} (\|\bs \ell_{\mathcal{I}}\|)\Big].
\end{align*}
The analogous argument as in \cite{buhl3} yields
$$E[|B_1|^2]  = {\mathcal{O}\Big(\frac{m_n^{2d} r_n^{2w}}{n^w}\Big)}\to 0.$$ 
Next, $\mathbb{E}[|B_2|] \to 0$ as $\nto$ by the same arguments as in \cite{buhl3} replacing $\cals_n$ by $\cali_n$ and $n^d$ by $n^w$.
Then we find for $B_3$ with the same replacements
\begin{align*}
\mathbb{E}[B_3]
&= v_n^{-\frac{1}{2}} m_n^{d/2} n^{w/2} 
 \mathbb{E}\Big[I(\bs 0) \exp\Big\{i \lambda v_n^{-\frac{1}{2}} \sqrt{\frac{m_n^d}{n^w}} \sum\limits_{\|\bs i \|> r_n} I(\bs i) \Big\} \Big].
\end{align*}
We use definition~\eqref{alpha_balls} of the $\alpha$-mixing coefficients for
$$\Lambda_1'=\{\bs 0\}\quad\mbox{and}\quad \Lambda_2'=\{\bs i \in \mathcal{I}_n:  \|\bs i\|>r_n\},$$
such that $|\Lambda_1'|=1$, $|\Lambda_2'| \leq n^w$ and $d(\Lambda_1',\Lambda_2')>r_n$. 
Abbreviate 
$$\eta(r_n) :=\exp\Big\{i \lambda v_n^{-\frac{1}{2}} \sqrt{\frac{m_n^d}{n^w}} \sum\limits_{ \|\bs i \|> r_n} I(\bs i) \Big\},$$ 
then $I(\bs 0)$ and $\eta(r_n)$ are measurable with respect to ${\sigma}_{\Lambda_1}$ and ${\sigma}_{\Lambda_2}$, respectively, where $\Lambda_i=\cup_{\bs s \in \mathcal{F}\times\Lambda_i'} \mathcal{B}(\bs s,\ga)$.
Now we apply  Theorem~17.2.1 of Ibragimov and Linnik to obtain
$$|\mathbb{E}[B_3]|
\leq  {4 v_n^{-1/2} m_n^{d/2} n^{w/2}\alpha_{1, n^w }(r_n) \to 0},$$
where  convergence to 0 is guaranteed by condition (M4iii).
\end{proof}

\noindent \textbf{Part II: } CLT for $\wh\rho_{AB,m_n}$ and  limit covariance matrix\\[2mm]
Recall the definition of $\mathcal{H}=\{\bs h^{(1)},\ldots,\bs h^{(p)}\}$.
For $i \in \{1,\ldots,p\}$, write $\bs h^{(i)}=(\bs h_{\mathcal{F}}^{(i)},\bs h_{\mathcal{I}}^{(i)})$ with respect to the fixed and increasing domains $\mathcal{F}$ and $\mathcal{I}_n$. Write further $\bs h_{\mathcal{F}}^{(i)}=(h_{\mathcal{F}}^{(i,1)},\ldots,h_{\mathcal{F}}^{(i,q)})$ and $\bs h_{\mathcal{I}}^{(i)}=(h_{\mathcal{I}}^{(i,1)},\ldots,h_{\mathcal{I}}^{(i,w)})$.
Now we define the ratio
$$R_n(D_i,D_{p+1}):=\frac{\mathbb{P}(\bs Y(\bs 0)/a_m \in D_i)}{\mathbb{P}(\bs Y(\bs 0)/a_m \in D_{p+1})}=\frac{\mu_{\mathcal{B}(\bs 0,\gamma),m_n}(D_i)}{\mu_{\mathcal{B}(\bs 0,\gamma),m_n}(D_{p+1})}$$
and the corresponding empirical estimator
\begin{align*}
\widehat{R}_n(D_i,D_{p+1})&:=\frac{|\mathcal{F}| \sum_{\bs i \in \mathcal{I}_n} \sum_{\bs f \in {\mathcal{F}}(\bs h_{\mathcal{F}}^{(i)})}\mathbbmss{1}_{\{\bs Y(\bs f,\bs i)/a_m \in D_i\}}}{|{\mathcal{F}}(\bs h_{\mathcal{F}}^{(i)})|\sum_{\bs i \in \mathcal{I}_n} \sum_{\bs f \in \mathcal{F}}\mathbbmss{1}_{\{\bs Y(\bs f,\bs i)/a_m \in D_{p+1}\}}}\\
&=\frac{\frac{m_n^d}{n^w} \sum_{\bs i\in \mathcal{I}_n} \frac{1}{|{\mathcal{F}}(\bs h_{\mathcal{F}}^{(i)})|} \sum_{\bs f \in {\mathcal{F}}(\bs h_{\mathcal{F}}^{(i)})} \mathbbmss{1}_{\{\bs Y(\bs f, \bs i)/a_m \in D_i\}}}{\frac{m_n^d}{n^w} \sum_{\bs i\in \mathcal{I}_n} \frac{1}{|{\mathcal{F}}(\bs 0)|} \sum_{\bs f \in {\mathcal{F}}(\bs 0)} \mathbbmss{1}_{\{\bs Y(\bs f, \bs i)/a_m \in D_{p+1}\}}}=\frac{\wh{\mu}_{\mathcal{B}(\bs 0,\gamma),m_n}(D_i)}{\wh{\mu}_{\mathcal{B}(\bs 0,\gamma),m_n}(D_{p+1})},
\end{align*}
using that ${\mathcal{F}}(\bs 0)=\mathcal{F}.$
Observe that
$$|{\mathcal{D}_n}(\bs h^{(i)})|=|{\mathcal{F}}(\bs h_{\mathcal{F}}^{(i)})|\prod\limits_{j=1}^w (n-|h_{\mathcal{I}}^{(i,j)}|) \sim |{\mathcal{F}}(\bs h_{\mathcal{F}}^{(i)})| n^w,\quad \nto.$$
Then the empirical  extremogram as defined in \eqref{EmpEst} for $\mu$-continuous Borel sets $A,B$ in $\overline{\mathbb{R}}\backslash\{0\}$ satisfies as $\nto$,
\begin{align*}
\widehat{\rho}_{AB,m_n}(\bs h^{(i)})
&= \frac{\frac{1}{|{\mathcal{D}_n}(\bs h^{(i)})|}\sum\limits_{\bs s \in {\mathcal{D}_n}(\bs h^{(i)})} \mathbbmss{1}_{\{X(\bs s)/a_m \in A, X(\bs s + \bs h^{(i)})/a_m \in B\}}}{\frac{1}{|\mathcal{D}_n|}\sum\limits_{\bs s \in \mathcal{D}_n} \one{\{ X(\bs s)/a_m \in A\}}} \\
& \sim \frac{\frac{1}{|{\mathcal{F}}(\bs h_{\mathcal{F}}^{(i)})|n^w}\sum_{\bs i \in {\mathcal{I}_n}(\bs h_{\mathcal{I}}^{(i)})}  \sum_{\bs f \in {\mathcal{F}}(\bs h_{\mathcal{F}}^{(i)})} \one{\{X(\bs f,\bs i)/a_m \in A, X(\bs f+\bs h_{\mathcal{F}}^{(i)},\bs i+\bs h_{\mathcal{I}}^{(i)})/a_m \in B\}}}{\frac{1}{|\mathcal{F}| n^w}\sum_{\bs i \in \mathcal{I}_n} \sum_{\bs f \in \mathcal{F}} \one{\{X(\bs f,\bs i)/a_m \in D_{p+1}\}}}\\
&\sim \frac{|\mathcal{F}|\sum_{\bs i \in \mathcal{I}_n} \sum_{\bs f \in {\mathcal{F}}(\bs h_{\mathcal{F}}^{(i)})} \one{\{\bs Y(\bs f,\bs i)/a_m \in D_i\}}}{|{\mathcal{F}}(\bs h_{\mathcal{F}}^{(i)})|\sum_{\bs i \in \mathcal{I}_n} \sum_{\bs f \in \mathcal{F}} \one{\{\bs Y(\bs f,\bs i)/a_m \in D_{p+1}\}}}=\widehat{R}_n(D_i,D_{p+1}),
\end{align*}
by definition \eqref{Di} of the sets $D_i$ for $i=1,\ldots,p$.
The remaining proof follows exactly as that of Theorem~4.2 in \cite{buhl3}, where in the last part the decomposition into a fixed and increasing grid has to be taken into account.
\halmos

\subsection{Proof of Theorem~\ref{Thm_asynbias}} \label{app_3}


{Throughout this proof, we suppress the sub index $m_n$ of $\wh{{\rho}}_{AB,m_n}$ and $\wh{{\rho}}_{AB,m_n}$ for notational ease.} The case, where $n^w/m_n^{3d} \to 0$ as $\nto$, is covered by Theorem~\ref{CLT_True}, so we assume that $n^w/m_n^{3d} \not\to 0$. 
Hence, by definition~\eqref{biascorrectedextremo} we have to consider
$$\wt{{\rho}}_{AB}(\bs h)=\wh{\rho}_{AB}(\bs h)-\frac{\underline{A}^{-1}}{2m_n^d}\Big[(\widehat{\rho}_{AB}(\bs h)-2\underline{A}/ \underline{B})(\widehat{\rho}_{AB}(\bs h)-1)\Big].$$
Observe that for $\bs h \in \mathcal{H}=\{\bs h^{(1)},\ldots,\bs h^{(p)}\}$, as $n \rightarrow \infty,$
\begin{align*}
\wt{{\rho}}_{AB}(\bs h)&-\rho_{AB}(\bs h)\\
=&\widehat{\rho}_{AB}(\bs h)-\rho_{AB,m_n}(\bs h)+\rho_{AB,m_n}(\bs h)-\frac{\underline{A}^{-1}}{2m_n^d}\Big[(\widehat{\rho}_{AB}(\bs h)-2\underline{A}/ \underline{B})(\widehat{\rho}_{AB}(\bs h)-1)\Big]-\rho_{AB}(\bs h) \\
=& (1+o(1))\Big\{\widehat{\rho}_{AB}(\bs h)-\rho_{AB,m_n}(\bs h) +\rho_{AB}(\bs h)+\frac{\underline{A}^{-1}}{2m_n^d}\Big[({\rho}_{AB}(\bs h)-2\underline{A}/ \underline{B})({\rho}_{AB}(\bs h)-1)\Big] \\
&-\frac{\underline{A}^{-1}}{2m_n^d}\Big[(\widehat{\rho}_{AB}(\bs h)-2\underline{A}/ \underline{B})(\widehat{\rho}_{AB}(\bs h)-1)\Big]-\rho_{AB}(\bs h)\Big\} 
\end{align*}
Since the conditions of Theorem~\ref{stasyn} are satisfied we have that 
$$\sqrt{\frac{n^w}{m_n^d}} \Big[\widehat{\rho}_{AB}(\bs h^{(i)}) -\rho_{AB,m_n}(\bs h^{(i)})\Big]_{i=1,\ldots,p} \std \mathcal{N}(\bs 0, \Pi)$$
and thus, by the continuous mapping theorem, it remains to show that for $\bs h \in \calh$,
\begin{align*}
\sqrt{\frac{n^w}{4m_n^{3d}}}\underline{A}^{-1}\Big[(\widehat{\rho}_{AB}(\bs h)-2\underline{A}/ \underline{B})(\widehat{\rho}_{AB}(\bs h)-1)-({\rho}_{AB}(\bs h)-2\underline{A}/ \underline{B})({\rho}_{AB}(\bs h)-1)\Big] \stp 0. 
\end{align*}
We rewrite the latter as 
\begin{align*}
\sqrt{\frac{n^w}{4m_n^{3d}}}\underline{A}^{-1}\Big[&(\widehat{\rho}_{AB}(\bs h)-2\underline{A}/ \underline{B})(\widehat{\rho}_{AB}(\bs h)-1)-(\rho_{AB,m_n}(\bs h)-2\underline{A}/ \underline{B})(\rho_{AB,m_n}(\bs h)-1)\\
&+(\rho_{AB,m_n}(\bs h)-2\underline{A}/ \underline{B})(\rho_{AB,m_n}(\bs h)-1)-({\rho}_{AB}(\bs h)-2\underline{A}/ \underline{B})({\rho}_{AB}(\bs h)-1)\Big]\\
=:A_1+A_2.
\end{align*}
As to $A_1$, we calculate
\begin{align*}
&\sqrt{\frac{n^w}{4m_n^{d}}}\frac{1}{2\rho_{AB}(\bs h)-(2\underline{A}/ \underline{B}+1)}\Big[(\widehat{\rho}_{AB}(\bs h)-2\underline{A}/ \underline{B})(\widehat{\rho}_{AB}(\bs h)-1)-(\rho_{AB,m_n}(\bs h)-2\underline{A}/ \underline{B})(\rho_{AB,m_n}(\bs h)-1)\Big] \\
=&\sqrt{\frac{n^w}{4m_n^{d}}}\frac{1}{2\rho_{AB}(\bs h)-(2\underline{A}/ \underline{B}+1)}\Big[\widehat{\rho}_{AB}(\bs h)^2-(2\underline{A}/ \underline{B}+1)\widehat{\rho}_{AB}(\bs h)-\Big(\rho^2_{AB,m_n}(\bs h)-(2\underline{A}/ \underline{B}+1)\rho_{AB,m_n}(\bs h)\Big)\Big] \\
=&\sqrt{\frac{n^w}{4m_n^{d}}}\frac{1}{2\rho_{AB}(\bs h)-(2\underline{A}/ \underline{B}+1)}\Big[(\widehat{\rho}_{AB}(\bs h)-\rho_{AB,m_n}(\bs h))(\widehat{\rho}_{AB}(\bs h)+\rho_{AB,m_n}(\bs h))\\
&-(2\underline{A}/ \underline{B}+1)(\widehat{\rho}_{AB}(\bs h)-\rho_{AB,m_n}(\bs h))\Big] \\
=&\sqrt{\frac{n^w}{4m_n^{d}}}(\widehat{\rho}_{AB}(\bs h)-\rho_{AB,m_n}(\bs h))\frac{\widehat{\rho}_{AB}(\bs h)+\rho_{AB,m_n}(\bs h)-(2\underline{A}/ \underline{B}+1)}{2\rho_{AB}(\bs h)-(2\underline{A}/ \underline{B}+1)}.
\end{align*}
By Theorem~\ref{stasyn}, the first term converges weakly to a normal distribution. Since $\wh\rho_{AB}(\bs h) \stp \rho_{AB}(\bs h)$ and $\rho_{AB,m_n}(\bs h) \to \rho_{AB}(\bs h)$ as $\nto$, the second term converges to $1$ in probability. Slutzky's theorem hence yields that $A_1 \stp 0$.
As to $A_2$, observe that
\begin{align*}
-\sqrt{\frac{4m_n^{3d}}{n^w}}\underline{A}A_2&=\rho_{AB}^2(\bs h)-\rho_{AB,m_n}^2(\bs h))+(2\underline{A}/ \underline{B}+1)(\rho_{AB,m_n}(\bs h)-\rho_{AB}(\bs h))\\
&=(1+o(1)) \Big\{\rho_{AB}^2(\bs h)-\Big[\rho_{AB}(\bs h)+\frac{\underline{A}^{-1}}{2m_n^d}\Big[({\rho}_{AB}(\bs h)-2\underline{A}/ \underline{B})({\rho}_{AB}(\bs h)-1)\Big]\Big]^2\\
&\hspace*{0.3cm}+(2\underline{A}/ \underline{B}+1)\Big[\rho_{AB}(\bs h) +\frac{\underline{A}^{-1}}{2m_n^d}\Big[({\rho}_{AB}(\bs h)-2\underline{A}/ \underline{B})({\rho}_{AB}(\bs h)-1)\Big] -\rho_{AB}(\bs h)\Big]\Big\}\\
&=(1+o(1)) \Big\{\rho_{AB}^2(\bs h)-\rho_{AB}^2(\bs h)-\frac{\underline{A}^{-1}\rho_{AB}(\bs h)}{m_n^d}\Big[({\rho}_{AB}(\bs h)-2\underline{A}/ \underline{B})({\rho}_{AB}(\bs h)-1)\Big]\\
&\hspace*{0.3cm}-\frac{\underline{A}^{-2}}{4m_n^{2d}}\Big[({\rho}_{AB}(\bs h)-2\underline{A}/ \underline{B})({\rho}_{AB}(\bs h)-1)\Big]^2\\
&\hspace*{0.3cm}+(2\underline{A}/ \underline{B}+1)\Big[\rho_{AB}(\bs h)+\frac{\underline{A}^{-1}}{2m_n^d}\Big[({\rho}_{AB}(\bs h)-2\underline{A}/ \underline{B})({\rho}_{AB}(\bs h)-1)\Big]-\rho_{AB}(\bs h)\Big]\Big\} \\
&=(1+o(1)) \Big\{\frac{\underline{A}^{-1}}{m_n^d}\Big[(\underline{A}/ \underline{B}+\frac{1}{2}-\rho_{AB}(\bs h))\Big[({\rho}_{AB}(\bs h)-2\underline{A}/ \underline{B})({\rho}_{AB}(\bs h)-1)\Big]\\
&\hspace*{0.3cm}-\frac{\underline{A}^{-1}}{4m_n^d}\Big[({\rho}_{AB}(\bs h)-2\underline{A}/ \underline{B})({\rho}_{AB}(\bs h)-1)\Big]^2\Big]\Big\}.
\end{align*}
Therefore $A_2$ converges to $0$ if and only if $\sqrt{n^w/m_n^{3d}} m_n^{-d}=\sqrt{n^w/m_n^{5d}}$ converges to $0$.
\halmos

\subsection{Proof of Theorem~\ref{GLSEcons}}\label{app_2}

We start with the proof of consistency and use a subsequence argument. Let $n'=n'(n)$ be some arbitrary subsequence of $n$. We show that there exists a further subsequence $n''=n''(n')$ such that 
$\widehat{\bs \theta}_{n'',V} \stas \bs \theta^{\star}$ as $n \rightarrow \infty$, which in turn implies \eqref{GLSEcons2}. \\
By (G1) we have for $i=1,\ldots,p$ that $\widehat{\rho}_{AB,m_n}(\bs h^{(i)}) \stp \rho_{AB,\bs \theta^{\star}}(\bs h^{(i)})$  as $n  \rightarrow \infty.$ 
Hence, there exists a subsequence $n''$ of $n'$ such that 
\begin{align} \label{asconssubseq}
\big[\widehat{\rho}_{AB,m_{n''}}(\bs h^{(i)})\big]_{i=1,\ldots,p} \stas \big[\rho_{AB,\bs \theta^{\star}}(\bs h^{(i)})\big]_{i=1,\ldots,p},
\end{align} 
as $n  \rightarrow \infty.$
For $\bs \theta \in \Theta$, we define the column vector and the quadratic forms
\beao
g(\bs \theta) &:=& \big[\rho_{AB,\bs \theta^{\star}}(\bs h^{(i)})-\rho_{AB, \bs \theta}(\bs h^{(i)}): i=1,\ldots,p\big]\trans_{i=1,\ldots,p},\\
Q(\bs \theta) &:=& g(\bs \theta)^TV(\bs \theta)g(\bs \theta)\quad\mbox{and}\quad
\widehat{Q}_n(\bs \theta) \, := \, \widehat{\bs g}_n(\bs \theta)\trans V(\bs \theta)\widehat{\bs g}_n(\bs \theta),
\eeao
where we recall from \eqref{g_hat} that
$\wh{\bs g}_n(\bs \theta)=\big[\widehat{\rho}_{AB,m_n}(\bs h^{(i)})-\rho_{AB, \bs \theta}(\bs h^{(i)})\big]\trans_{i=1,\ldots,p}.$
Assumptions (G1) and (G3) imply that $Q(\bs \theta)>0$ for $\bs \theta^{\star} \neq \bs \theta \in \Theta$ and that $Q(\bs \theta^{\star})=0$, so $\bs \theta^{\star}$ is the unique minimizer of $Q$. 
Smoothness and continuity of the functions $\rho_{AB,\bs \theta}(\bs h^{(i)})$ and $V(\bs \theta)$ (Assumptions (G4) and (G5) with $z_1=z_2=0$) and \eqref{asconssubseq} yield 
\begin{align}
\widehat{\Delta}_{n''}:=\sup\limits_{\bs \theta \in \Theta}\{|\widehat{Q}_{n''}(\bs \theta)-Q(\bs \theta)|\} \stas 0, \quad n \rightarrow \infty. \label{deltasubseq}
\end{align} 
Now assume that there exists some $\omega \in \Omega$ such that \eqref{deltasubseq} holds, but $\widehat{\bs \theta}_{n'',V}(\omega)\not\rightarrow \bs \theta^{\star}$. 
Then there exist $\epsilon>0$ and a subsequence $n'''=n'''(n'')$ such that for all $n \geq 1$,
$$\Vert{\widehat{\bs \theta}_{n''',V}(\omega)-\bs \theta^{\star} }\Vert > \epsilon.$$
Thus,
\begin{align*}
&\widehat{Q}_{n'''}(\widehat{\bs \theta}_{n''',V}(\omega))-\widehat{Q}_{n'''}(\bs \theta^{\star})\\&=-(Q(\widehat{\bs \theta}_{n''',V}(\omega))-\widehat{Q}_{n'''}(\widehat{\bs \theta}_{n''',V}(\omega)))+Q(\widehat{\bs \theta}_{n''',V}(\omega))-(\widehat{Q}_{n'''}(\bs \theta^{\star})-Q(\bs \theta^{\star}))-Q(\bs \theta^{\star}) \\
&\geq Q(\widehat{\bs \theta}_{n''',V}(\omega))-Q(\bs \theta^{\star})-2\widehat{\Delta}_{n'''}=Q(\widehat{\bs \theta}_{n''',V}(\omega))-2\widehat{\Delta}_{n'''} \\
&\geq \inf\{Q(\bs \theta): \Vert{\bs \theta-\bs \theta^{\star}}\Vert > \epsilon\}-2\widehat{\Delta}_{n'''}>0
\end{align*}
for all $n \geq n_0$ for some $n_0 \geq 1.$ But this contradicts the definition of $\widehat{\bs \theta}_{n''',V}$ as the minimizer of $\widehat{Q}_{n'''}(\bs \theta),$ $\bs \theta \in \Theta.$ Hence $\widehat{\bs \theta}_{n'',V} \stas \bs \theta^{\star}$ as $n \rightarrow \infty$ and this shows~\eqref{GLSEcons2}.


To prove the CLT~\eqref{asymptGLS}, we introduce the following notation:
\begin{enumerate}[$\bullet$]
\item We set $\rho_{AB,\bs \theta}^{(\ell)}(\bs h^{(i)}):=\frac{\partial}{\partial \theta_{\ell}}\rho_{AB,\bs \theta}(\bs h^{(i)})$ for $1 \leq i \leq p, 1 \leq \ell \leq k$ and 
\item 
$\bs \rho_{AB}^{(\ell)}(\bs \theta):=(\rho_{AB,\bs \theta}^{(\ell)}(\bs h^{(i)}): i=1,\ldots,p)\trans$ for $1 \leq \ell \leq k.$
The Jacobian matrix $\Rho_{AB}(\bs \theta)$~\eqref{Rho_matrix} can then be written as 
\begin{align*}
\Rho_{AB}(\bs \theta)=(-\bs \rho_{AB}^{(1)}(\bs \theta),\ldots, -\bs \rho_{AB}^{(k)}(\bs \theta)). 
\end{align*} 
\item We denote by $\bs e_{\ell} \in \mathbb{R}^k$ the $\ell$th unit vector.
\item For $1 \leq i,j \leq p$, let $v_{ij}(\bs \theta):=(V(\bs \theta))_{ij}$ be the entry in the $i$th row and $j$th column of $V(\bs \theta)$.
\item Set $v_{ij}^{(\ell)}(\bs \theta):= \frac{\partial}{\partial \theta_{\ell}} v_{ij}(\bs \theta)$ and $V^{(\ell)}(\bs \theta):=(v_{ij}^{(\ell)}(\bs \theta))_{1 \leq i,j \leq p}, \quad 1 \leq \ell \leq k.$
\end{enumerate}
As $\widehat{\bs \theta}_{n,V}$ minimizes $\widehat{\bs g}_n(\bs \theta)\trans V(\bs \theta) \widehat{\bs g}_n(\bs \theta)$ w.r.t. $\bs \theta$, we obtain for $1 \leq \ell \leq k$, 
\begin{align}
0&=\frac{\partial}{\partial \theta_{\ell}}(\widehat{\bs g}_n(\bs \theta)\trans V(\bs \theta)\widehat{\bs g}_n(\bs \theta))\Big|_{\bs \theta=\widehat{\bs \theta}_{n,V}} \nonumber\\
&=\wh{\bs g}_n(\wh{\bs \theta}_{n,V})\trans V^{(\ell)}(\wh{\bs \theta}_{n,V}) \wh{\bs g}_n(\wh{\bs \theta}_{n,V})-\bs \rho_{AB}^{(\ell)}(\wh{\bs \theta}_{n,V})\trans [V(\wh{\bs \theta}_{n,V})+V(\wh{\bs \theta}_{n,V})\trans] \wh{\bs g}_n(\wh{\bs \theta}_{n,V}).\label{asynGLSbefTay}
\end{align}
Now define the $p\times k$-matrix $\widehat{\Rho}_{AB,n}:=\int_0^1 \Rho_{AB}(u \bs \theta^{\star} + (1-u)\widehat{\bs \theta}_{n,V})\,\mathrm{d}u$, where the integral is taken componentwise.
 Assumptions~(G4) and (G5) with $z_1=z_2=1$ allow for a multivariate Taylor expansion of order 0 with integral remainder term of $\widehat{\bs g}_n(\widehat{\bs \theta}_{n,V})$ around the true parameter vector $\bs \theta^{\star}$, which yields 
\begin{align*}
\widehat{\bs g}_n(\widehat{\bs \theta}_{n,V})
&=\widehat{\bs g}_n(\bs \theta^{\star})+\widehat{\Rho}_{AB,n}\cdot(\widehat{\bs \theta}_{n,V}-\bs \theta^{\star}). 
\end{align*}
Plugging this into \eqref{asynGLSbefTay} and rearranging terms, we find
\begin{align}
&\Big(-\bs \rho_{AB}^{(\ell)}(\wh{\bs \theta}_{n,V})\trans [V(\widehat{\bs \theta}_{n,V})+V(\widehat{\bs \theta}_{n,V})\trans]\widehat{\Rho}_{AB,n} 
+(\widehat{\bs \theta}_{n,V}- \bs \theta^{\star})\trans \widehat{\Rho}_{AB,n}\trans V^{(\ell)}(\widehat{\bs \theta}_{n,V})\widehat{\Rho}_{AB,n}\Big)(\widehat{\bs \theta}_{n,V}-\bs \theta^{\star})\nonumber\\
=&\bs \rho_{AB}^{(\ell)}(\wh{\bs \theta}_{n,V})\trans [V(\wh{\bs \theta}_{n,V})+V(\wh{\bs \theta}_{n,V})\trans] \wh{\bs g}_n(\bs \theta^\star)-\wh{\bs g}_n(\bs \theta^\star)\trans V^{(\ell)}(\wh{\bs \theta}_{n,V}) \wh{\bs g}_n(\bs\theta^\star)\nonumber\\
&-\wh{\bs g}_n(\bs\theta^\star)\trans [V^{(\ell)}(\wh{\bs \theta}_{n,V})+V^{(\ell)}(\wh{\bs \theta}_{n,V})\trans]\widehat{\Rho}_{AB,n}(\widehat{\bs \theta}_{n,V}-\bs \theta^{\star}) \label{SOE_Thm43}
\end{align}
for $1 \leq \ell \leq k.$
Defining $\widehat{R}_{n,V}$ as the $k\times k$-matrix whose $\ell$th row is given by
$$(\widehat{\bs \theta}_{n,V}- \bs \theta^{\star})\trans \widehat{\Rho}_{AB,n}\trans V^{(\ell)}(\widehat{\bs \theta}_{n,V})\widehat{\Rho}_{AB,n}, \quad 1 \leq \ell \leq k,$$
the system of equations \eqref{SOE_Thm43} can be written in compact matrix form as
\begin{align}
&(\Rho_{AB}(\widehat{\bs \theta}_{n,V})\trans[V(\widehat{\bs \theta}_{n,V})+V(\widehat{\bs \theta}_{n,V})\trans]\widehat{\Rho}_{AB,n}+\widehat{R}_{n,V})(\widehat{\bs \theta}_{n,V}-\bs \theta^{\star}) \nonumber\\
=&-\Rho_{AB}(\widehat{\bs \theta}_{n,V})\trans[V(\widehat{\bs \theta}_{n,V})+V(\widehat{\bs \theta}_{n,V})\trans]\widehat{\bs g}_n(\bs \theta^{\star})-\sum\limits_{\ell=1}^k\widehat{\bs g}_n(\bs \theta^{\star})\trans V^{(\ell)}(\widehat{\bs \theta}_{n,V})\widehat{\bs g}_n(\bs \theta^{\star})\bs e_{\ell}\nonumber\\
&-\sum\limits_{\ell=1}^k\widehat{\bs g}_n(\bs \theta^{\star})\trans [V^{(\ell)}(\widehat{\bs \theta}_{n,V})+V^{(\ell)}(\widehat{\bs \theta}_{n,V})\trans]\widehat{\Rho}_{AB,n}(\widehat{\bs \theta}_{n,V}-\bs \theta^{\star})\bs e_{\ell}. \label{SOE2_Thm43}
\end{align}
Hence, multiplying \eqref{SOE2_Thm43} by $\sqrt{n^w/m_n^d}$ and rearranging terms, we have,
\begin{align*}
&\sqrt{\frac{n^w}{m_n^d}}(\widehat{\bs \theta}_{n,V}-\bs \theta^{\star})\\
=&-\{\Rho_{AB}(\widehat{\bs \theta}_{n,V})\trans[V(\widehat{\bs \theta}_{n,V})+V(\widehat{\bs \theta}_{n,V})\trans]\widehat{\Rho}_{AB,n}+\widehat{R}_{n,V}\}^{-1}  \\
& \quad \times \Rho_{AB}(\widehat{\bs \theta}_{n,V})\trans[V(\widehat{\bs \theta}_{n,V})+V(\widehat{\bs \theta}_{n,V})\trans]\sqrt{\frac{n^w}{m_n^d}}\widehat{\bs g}_n(\bs \theta^{\star})\\
&-\{\Rho_{AB}(\widehat{\bs \theta}_{n,V})\trans[V(\widehat{\bs \theta}_{n,V})+V(\widehat{\bs \theta}_{n,V})\trans]\widehat{\Rho}_{AB,n}+\widehat{R}_{n,V}\}^{-1}\sum\limits_{\ell=1}^k\sqrt{\frac{n^w}{m_n^d}}\widehat{\bs g}_n(\bs \theta^{\star})\trans V^{(\ell)}(\widehat{\bs \theta}_{n,V})\widehat{\bs g}_n(\bs \theta^{\star})\bs e_{\ell} \\
&-\{\Rho_{AB}(\widehat{\bs \theta}_{n,V})\trans[V(\widehat{\bs \theta}_{n,V})+V(\widehat{\bs \theta}_{n,V})\trans]\widehat{\Rho}_{AB,n}+\widehat{R}_{n,V}\}^{-1}\\
& \quad \times \sum\limits_{\ell=1}^k\sqrt{\frac{n^w}{m_n^d}}\widehat{\bs g}_n(\bs \theta^{\star})\trans [V^{(\ell)}(\widehat{\bs \theta}_{n,V})+V^{(\ell)}(\widehat{\bs \theta}_{n,V})\trans]\widehat{\Rho}_{AB,n}(\widehat{\bs \theta}_{n,V}-\bs \theta^{\star})\bs e_{\ell}\\
&=:-A-B-C.
\end{align*}
Observe that the smoothness conditions (G4) and (G5) and the rank condition (G6) ensure invertibility of the terms in curly brackets and boundedness of its inverse. 
For the remainder of the proof, we can hence use Slutsky's theorem; to this end note that, as $n \rightarrow \infty$:
\begin{itemize}
\item By conditions (G4) and (G5ii) with $z_1=z_2=1$, the matrices $V(\bs \theta)$ and $\Rho_{AB}(\bs \theta)$ are continuous in $\bs \theta$, hence $V(\widehat{\bs \theta}_{n,V}) \stp V(\bs \theta^{\star})$ and $\Rho_{AB}(\widehat{\bs \theta}_{n,V}) \stp \Rho_{AB}(\bs \theta^{\star})$ by continuous mapping.
\item Using \eqref{GLSEcons2}, we find that $(\widehat{\bs \theta}_{n,V}-\bs \theta^{\star}) \stp \bs 0$, $\widehat{R}_{n,V} \stp (\bs 0, \ldots, \bs 0)$ and $\widehat{\Rho}_{AB,n} \stp \Rho_{AB}(\bs \theta^{\star})$.
\item The previous bullet point directly implies that $C \stp \bs 0.$ 
\item As to $A$, condition (G2) directly yields $\sqrt{\frac{n^w}{m_n^d}}\widehat{\bs g}_n(\bs \theta^{\star})
\std \mathcal{N}(\bs 0, \Pi)$.
\item Furthermore, $\widehat{\bs g}_n(\bs \theta^{\star}) \stp \bs 0$ by (G1) and therefore $B \stp \bs 0$.
\end{itemize}
Finally, summarising those results, with $B(\bs \theta^{\star})=\big(\Rho_{AB}(\bs \theta^{\star})\trans[V(\bs \theta^{\star})+V(\bs \theta^{\star})\trans]\Rho_{AB}(\bs \theta^{\star})\big)^{-1}$, we obtain \eqref{asymptGLS}.
\halmos
\end{document}